\newtheorem{theorem}{Theorem}[section]
\newtheorem{lemma}[theorem]{Lemma}
\newtheorem{proposition}[theorem]{Proposition}
\newtheorem{corollary}[theorem]{Corollary}
\newtheorem{definition}[theorem]{Definition}
\newtheorem{maintheorem}{Theorem}
\newtheorem{remark}[theorem]{Remark}
\newtheorem{example}[theorem]{Example}
\newtheorem{question}[theorem]{Question}
\newcommand{\N}{\mathbb{N}}
\newcommand{\Z}{\mathbb{Z}}
\newcommand{\G}{G}
\newcommand{\e}{\mathbf{\mathscr{E}}_{2}}
\newcommand{\os}{\mathscr{E}_{1}}
\newcommand{\lb}{\mathbb{\llbracket}}
\newcommand{\rb}{\mathbb{\rrbracket}}
\begin{document}

\title{Infinite Eulerian paths are computable on graphs with vertices of infinite degree}
\author{Nicanor Carrasco-Vargas}
\date{}
\maketitle
\begin{abstract}
The Erdős, Grünwald, and Weiszfeld theorem is a characterization of those
infinite graphs which are Eulerian. That is, infinite graphs that
admit infinite Eulerian paths. In this article we prove an effective
version of the Erdős, Grünwald, and Weiszfeld theorem for a class
of graphs where vertices of infinite degree are allowed, generalizing
a theorem of D.Bean. Our results are obtained from a characterization
of those finite paths in a graph that can be extended to infinite
Eulerian paths.
\end{abstract}	\section{Introduction}

A basic notion in graph theory is that of an Eulerian circuit, which
is a circuit that visits every edge exactly once. It is very easy
to determine whether a finite graph admits an Eulerian circuit: by
Euler's theorem, we just need to check the parity of the vertex degrees.
It follows from Euler's theorem that computing Eulerian circuits is
also easy from the computational point of view, requiring linear time
on the number of edges \cite{fleischner_Eulerian_1990}. This shows
some contrast with other classic objects such as Hamiltonian paths
or 3-colorings, whose existence does not have a known simple characterization,
and whose computation is believed to be impossible in polynomial time
(it is an NP-complete problem). 

The Erdős, Grünwald, and Weiszfeld theorem generalizes Euler's theorem
to infinite graphs, that is, this result characterizes which infinite
graphs admit infinite Eulerian paths \cite{erdos_Eulerian_1936}.
According to this result, the existence of infinite Eulerian paths
is determined by the parity of the vertex degrees, and by the number
of \emph{ends }of the graph. 

As in the finite case, infinite Eulerian paths are comparatively easy
to compute. The fundamental result is due to D.Bean, who proved that
the Erdős, Grünwald, and Weiszfeld theorem is effective for highly
computable graphs \cite{bean_Effective_1976}. This means that, given
a highly computable graph which satisfies the hypothesis of the theorem,
it is algorithmically possible to compute an infinite Eulerian path.
In other words, for highly computable graphs the existence of an infinite
Eulerian path is equivalent to the existence of a computable one. 

The Erdős, Grünwald, and Weiszfeld theorem is one of the few classical
results in the theory of infinite graphs that is effective for highly
computable graphs. There are some results on colorings of graphs which
are effective for highly computable graphs \cite{zbMATH03749036,zbMATH03611389,TVERBERG198427,zbMATH03836015}.
On the other hand, non effective results include König's infinity
Lemma, Hall's matching theorem \cite{manaster_Effective_1972}, Ramsey's
theorem \cite{specker_Ramsey_1971,jockusch_Ramsey_1972}, certain
results regarding edge and vertex colorings \cite{zbMATH03611389,bean_Effective_1976,remmel_Graph_1986},
and more recently, domatic partitions \cite{jura_Domatic_2014}. These
results lie in the context of Nerode’s recursive mathematics program,
where one takes theorems whose classical proofs are not effective
and either (1) finds an alternative proof that is effective, (2) finds
a weaker statement that is effective, or (3) proves that the result
cannot be made effective. 

A graph is computable when the relations of adjacency and incidence
are decidable, and highly computable when it is computable, locally
finite, and with a computable vertex degree function. D.Bean also
proved that the Erdős, Grünwald, and Weiszfeld theorem cannot be made
effective for computable graphs. That is, there are computable graphs
which admit infinite Eulerian paths, but all of them are uncomputable
\cite{bean_Effective_1976}. We mention that the Erdős, Grünwald,
and Weiszfeld theorem has been revisited in recent years \cite{zbMATH05725868,inproceedings,jura_Acomputable_2016}
from the perspective of computability theory, and has been used to
compare different computability notions for infinite graphs. 

Now we provide a brief account of our results, which require rather
long definitions. In the present article we consider a class of graphs
which lie in between computable graphs and highly computable graphs.
More specifically, we weaken the hypothesis of highly computable graph
by dropping the local finiteness hypothesis, so the graph may have
vertices with infinite degree. However, we still require the vertex
degree function
\[
V(G)\to\N\cup\{\infty\}
\]
to be computable. We call these \emph{moderately computable graphs}.
Our main result is that the Erdős, Grünwald, and Weiszfeld theorem
is effective for moderately computable graphs. Thus, a moderately
computable graph admits an infinite Eulerian path if and only if it
admits a computable one. To our knowledge, this is the first positive
computability result for graphs with vertices of infinite degree. 

For graphs satisfying the corresponding hypotheses, we show something
stronger than the existence of a computable infinite Eulerian path:
we prove that it is algorithmically decidable whether a finite path
can be extended to an infinite Eulerian path.

Our results are obtained by characterizing, in a graph admitting infinite
Eulerian paths, the class of finite paths that can be extended to
some infinite Eulerian path. This characterization provides a relatively
short and independent proof of the Erdős, Grünwald, and Weiszfeld
theorem.

When considering infinite Eulerian paths, we need to distinguish between
one-way infinite Eulerian paths and two-way infinite Eulerian paths.
All the previous discussion is valid for both classes of infinite
Eulerian paths. In the following section we present our results, and
introduce some definitions.

\section{Definitions and results}

In order to state our results we will use standard concepts from graph
theory, the reader is referred to the preliminaries for unexplained
notions. A particularly relevant concept for us is that of ends, so
we repeat the definition here. The \textbf{number of ends} of a graph
$\G$ is the supremum of the number of infinite connected components
that we can obtain by removing from $G$ a finite set of edges. Note
that a graph may have infinitely many ends, but here we will only
consider graphs with finitely many ends. We can now state the Erdős,
Grünwald, and Weiszfeld theorem, and the conditions involved in the
characterization:
\begin{theorem}[\cite{erdos_Eulerian_1936}]
\label{thm:egw-one-sided}\label{thm:egw-two-sided} \label{thm:egw}A
graph $\G$ admits a one-way (resp. two-way) infinite Eulerian path
if and only if it satisfies $\os$ (resp. $\e$). 
\end{theorem}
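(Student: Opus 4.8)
The plan is to deduce Theorem~\ref{thm:egw} from a characterization of the finite paths in $\G$ that extend to an infinite Eulerian path, proving the ``only if'' and ``if'' directions separately and treating the one-way case ($\os$) and the two-way case ($\e$) in parallel, since they differ only in the bookkeeping at the distinguished endpoint(s).

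\textbf{Necessity.} Suppose $\G$ admits a one-way infinite Eulerian path $P = v_0 e_1 v_1 e_2 \cdots$ (resp. a two-way path $P = \cdots e_{-1} v_0 e_1 \cdots$). Since $P$ traverses every edge exactly once along incident vertices, $\G$ has countably many edges and is connected on the set of vertices meeting an edge. Reading off how $P$ passes through a vertex $v$: away from the distinguished endpoint each visit of $v$ consumes two edges, so $v$ has even or infinite degree, while in the one-way case the one extra edge at $v_0$ makes $v_0$ of odd or infinite degree. Removing a finite edge set $F$ from $\G$ cuts $P$ into finitely many finite subpaths together with its infinite tail(s), so $\G - F$ has at most one (resp. two) infinite components; a closer look at the parity of the number of times $P$ crosses between two such components, and (in the one-way case) at where $P$ starts relative to them, yields the remaining end-related clause of $\os$ (resp. $\e$). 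Each verification is routine.

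\textbf{Sufficiency.} Assume $\G$ satisfies $\os$ (resp. $\e$) and fix an enumeration $e_1, e_2, \dots$ of $E(\G)$. We build the path as an increasing union $P_0 \subseteq P_1 \subseteq \cdots$ of finite paths, where in the two-way case $P_{n+1}$ extends $P_n$ at both ends, with $\{e_1, \dots, e_n\} \subseteq E(P_n)$, maintaining the invariant that each $P_n$ is \emph{extendable}: $\G$ still admits an infinite Eulerian path having $P_n$ as an initial (resp. central) segment. Then $\bigcup_n P_n$ is the desired path. The base case asserts that the trivial path at a suitable base vertex --- one of odd or infinite degree in the one-way case --- is extendable, which is essentially the hypothesis $\os$ (resp. $\e$). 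At stage $n+1$, if $e_{n+1} \in E(P_n)$ there is nothing to do; otherwise we must append to $P_n$ a finite path reaching and crossing $e_{n+1}$ without destroying extendability. This is exactly where the characterization of extendable finite paths is invoked: it must describe, in terms of the residual graph $\G - E(P_n)$ (its degrees, its connectivity, its number of ends, and the relevant parities), precisely which finite extensions of $P_n$ remain extendable, and must guarantee that some such extension reaching $e_{n+1}$ exists.

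\textbf{Main obstacle.} The crux of the argument is therefore this extendability characterization and its preservation under the extensions made in the construction, and the delicate point is the presence of vertices of infinite degree. Such a vertex is never exhausted, so it survives in $\G - E(P_n)$ at every stage; one must check that deleting finitely many edges at it does not disturb the end count or the parity conditions, that an infinite-degree vertex lying on $P_n$ is not an obstruction to continuing, and that the clauses of $\os$/$\e$ (handled differently by D.~Bean in the locally finite case) propagate correctly. Once this combinatorial core is established, the two directions above combine to give Theorem~\ref{thm:egw}.
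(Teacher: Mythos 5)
Your outline correctly identifies the architecture the paper uses (necessity by direct counting; sufficiency by an increasing union of finite paths each of which remains extendable), but as written the sufficiency direction is circular and the combinatorial core of the theorem is missing. You define the invariant ``$P_n$ is extendable'' to mean ``$\G$ still admits an infinite Eulerian path having $P_n$ as an initial (resp.\ central) segment,'' so the base case of your induction --- that the trivial path at a suitable vertex is extendable --- is literally the statement you are trying to prove. You then say that the characterization of extendable paths ``must describe'' which extensions preserve extendability and ``must guarantee'' that one reaching $e_{n+1}$ exists; this names the obligation without discharging it. The actual content is a \emph{concrete, intrinsic} characterization: in the one-way case, $t$ is extendable iff $\G-t$ is connected, the initial vertex of $t$ is distinguished (odd degree, or infinite degree when no odd-degree vertex exists), and the final vertex of $t$ has an unvisited incident edge; in the two-way case, iff $\G-t$ has no finite component and both endpoints of $t$ retain unvisited incident edges (distinct ones if $t$ is not a circuit). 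Proving that paths satisfying these conditions exist and can always be prolonged requires: (i) a technical lemma showing that any finite path can be enlarged, keeping its endpoints, so that its removal leaves no finite component --- proved by applying Euler's theorem to the finite subgraph spanned by $t$ together with the finite components of $\G-t$, after a Handshaking-lemma parity check; (ii) the observation that removing such a path from a graph satisfying $\os$ (resp.\ a bi-extensible \emph{circuit} from a graph satisfying $\e$) leaves a graph satisfying $\os$ (resp.\ $\e$), which is what allows the construction to iterate; and (iii) in the two-way case, a case analysis according to whether $t$ is a circuit and whether $\G-t$ is connected (the two-ended situation splits $\G-t$ into two components each satisfying $\os$). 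None of this appears in your proposal, and it is precisely where the vertices of infinite degree and the end conditions do real work.

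A smaller but genuine gap is in the necessity direction for $\e$: the clause that removing a finite edge set inducing an \emph{even} subgraph leaves exactly one infinite component is not ``routine parity bookkeeping.'' The argument is to take the first and last vertices $u,v$ of $\G[E]$ visited by $T$, let $F$ be the edges visited between them, and observe that $\G[F]-E$ has odd degree exactly at $u$ and $v$; if it were disconnected, the component containing $u$ would be a finite graph with exactly one odd-degree vertex, contradicting the Handshaking lemma. Your sketch of ``a closer look at the parity of crossings'' points in the right direction but does not isolate this step, which is the only place the evenness hypothesis on $E$ is used.
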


\begin{definition}
\label{def:egw-conditions-e1}$\os$ stands for the following set
of conditions for a graph $G$. 
\begin{itemize}
	\item $E(\G)$ is countable and infinite.
	\item $\G$ is connected.
	\item Either $G$ has exactly one vertex with odd degree, or $G$ has at
	least one vertex with infinite degree and no vertices with odd degree. 
	\item $\G$ has one end.
\end{itemize}
\end{definition}

\begin{definition}
$\e$ stands for the following set of conditions for a graph $G$.
\begin{itemize}
	\item $E(\G)$ is countable and infinite.
	\item $\G$ is connected.
	\item The degree of each vertex is infinite or even.
	\item $\G$ has one or two ends. Moreover, if $E$ is a finite set of edges
	which induces a subgraph where all vertices have even degree, then
	$\G-E$ has one infinite connected component.
\end{itemize}
\end{definition}

The original statement did not explicitly mention the concept of ends,
which was coined some years later \cite{hopf_Enden_1944,freudenthal_Ueber_1945,halin_Ueber_1964}
(see \cite{diestel_Graphtheoretical_2003} for a modern source). It
is interesting to note, however, that the characterization proved
by Erdős, Grünwald, and Weiszfeld anticipated this notion. 

We now state our computability results. As explained before, in the
present work we introduce the notion of moderately computable graph:
\begin{definition}
A \textbf{moderately computable graph }is a computable graph for which
the vertex degree function $V(G)\to\N\cup\{\infty\}$ is computable. 
\end{definition}
The definitions of computable, moderately computable, and highly computable
graph are reviewed in detail in \Cref{subsec:different=000020computability=000020notions=000020for=000020graphs}.
We make some further comments on the notion of moderately computable
graph in \Cref{sec:remarks}. For now, we remark that the class
of moderately computable graphs generalizes that of highly computable
graphs in the following sense: when restricted to locally finite graphs,
the two notions coincide. 

The main result of this work is that the Erdős, Grünwald, and Weiszfeld
theorem is effective for moderately computable graphs:

\begin{maintheorem}\label{thm:effective=000020egw}

If a moderately computable graph satisfies $\os$ (resp. $\e$), then
it admits a computable one-way (resp. two-way) infinite Eulerian path.

\end{maintheorem}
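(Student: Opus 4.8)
The plan is to reduce \Cref{thm:effective=000020egw} to a single combinatorial statement — a characterization of which finite paths extend to an infinite Eulerian path — and then to verify that this characterization is algorithmically decidable on moderately computable graphs; this last point is where essentially all the work lies.

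\textbf{The characterization (and a new proof of \Cref{thm:egw}).} First I would prove: if $G$ satisfies $\os$, then a finite path $P = v_{0}e_{1}v_{1}\cdots e_{k}v_{k}$ extends to a one-way infinite Eulerian path if and only if (i) $v_{0}$ is the required start vertex — the unique odd-degree vertex, or, when $G$ has no odd vertex, any vertex of infinite degree — and (ii) all edges of $E(\G)\setminus E(P)$ lie in a single connected component $C$ of $\G-E(P)$. The parity conditions one might expect are automatic: a degree count gives $\deg_{\G-E(P)}(v)$ even or infinite for $v\neq v_{k}$ and odd or infinite for $v=v_{k}$, and the same count shows $v_{k}$ always has an unused incident edge, so $v_{k}\in C$; moreover, since $\G$ has one end, $C$ again has exactly one end. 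Hence $C$, rooted at $v_{k}$, again satisfies $\os$, and one prepends $P$ to an Eulerian path of $C$ issuing from $v_{k}$; the converse direction is immediate by restricting a given Eulerian path extending $P$. The genuine input here is that a graph satisfying $\os$ rooted at $v$ admits an Eulerian path \emph{starting at $v$}; I would establish this simultaneously with the implication ``$\os$ $\Rightarrow$ Eulerian'' by the standard stagewise construction, the only delicate point being an \emph{extension lemma}: from any finite path $P$ satisfying (i)--(ii) there is a finite extension, reaching any prescribed unused edge, again satisfying (i)--(ii). This reduces to finding an incident edge that is not a bridge (or is a pendant), which a short handshaking argument shows must exist under the degree and one-end hypotheses. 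Taking $P$ trivial recovers \Cref{thm:egw} in the one-way case.

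\textbf{The algorithm.} Granting that condition (ii) is decidable, \Cref{thm:effective=000020egw} follows by a stage construction. Fix a computable enumeration $e_{0},e_{1},\dots$ of $E(\G)$, and let $P_{0}$ be the required start vertex (locatable using the computable degree function). Given $P_{n}$ known to extend to an infinite Eulerian path, search through finite extensions $P'$ of $P_{n}$ for one that again extends to an infinite Eulerian path and has $e_{n}\in E(P')$ and $|E(P')|>|E(P_{n})|$, and set $P_{n+1}:=P'$. Such a $P'$ exists by the extension lemma, and it is found effectively because ``$P'$ extends to an infinite Eulerian path'' is decidable. Then $\bigcup_{n}P_{n}$ is a computable one-way infinite Eulerian path.

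\textbf{Deciding the characterization — the main obstacle.} The core of the paper is that, for moderately computable $\G$ satisfying $\os$, one can decide whether a finite path $R$ extends to an infinite Eulerian path. After checking (i) (decidable via the degree function), it remains to decide (ii), i.e.\ whether $\G-E(R)$ has a \emph{bad} component: one that is finite and contains an edge of $E(\G)\setminus E(R)$. Two facts make this effective. First, every vertex of infinite degree lies in an infinite component of $\G-E(R)$; hence a bad component uses only finite-degree vertices, and for each such vertex the computable degree function lets us list \emph{all} its incident edges. Second, the one-end hypothesis forces $\G-E(R)$ to have exactly one infinite component $C^{\ast}$ with $V(\G)\setminus C^{\ast}$ finite, while connectedness of $\G$ forces every component of $\G-E(R)$ to meet the finite set $V(R)$. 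Using these, ``$R$ does not extend'' is semidecidable: search for a finite, edge-containing set $D$ of finite-degree vertices that is a union of components of $\G-E(R)$. And ``$R$ extends'' is also semidecidable: search for a finite set $S$ of vertices isolated in $\G-E(R)$ such that all of $V(R)\setminus S$ lies in one component of $\G-E(R)$ — the set $S=V(\G)\setminus C^{\ast}$ witnesses this when $R$ extends, and a short argument (using that $C^{\ast}$ meets $V(R)$) shows no such $S$ exists otherwise. Running both searches in parallel decides (ii). I expect this decidability argument, together with the extension lemma, to be the technically hardest part.

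\textbf{The two-way case.} The treatment of $\e$ is parallel: the characterization of an extendable finite path $R=u_{0}\cdots u_{m}$ again says the degrees are even or infinite and that the unused edges are suitably distributed among the at most two infinite components of $\G-E(R)$ relative to $u_{0}$ and $u_{m}$ (when a single infinite component contains both endpoints, a clean way to phrase it is that $\G-E(R)$ with a virtual edge $u_{0}u_{m}$ adjoined should satisfy $\e$, using the last clause of $\e$). The parity bookkeeping, the stage construction, and the decidability argument carry over verbatim, with the extra case analysis controlled by that last clause. Since moderately computable and highly computable graphs coincide on locally finite graphs, this recovers and extends D.~Bean's theorem.
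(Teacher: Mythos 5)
Your overall architecture coincides with the paper's: characterize the finite paths that extend to an infinite Eulerian path, prove that characterization decidable on moderately computable graphs, then build the path by effective stagewise search; your one-way characterization is equivalent to the paper's (your remark that the ``unused edge at the final vertex'' condition is automatic is correct), and your two parallel semidecision procedures for condition (ii) match the paper's. The first genuine gap is in your extension lemma. You reduce ``from any $P$ satisfying (i)--(ii) there is a finite extension that reaches a prescribed unused edge and again satisfies (i)--(ii)'' to ``find an incident edge that is not a bridge.'' Even granting that local claim, it only lets you extend by one edge while preserving (ii); it does not produce an extension that \emph{visits a prescribed edge} $e_n$, and in an infinite graph a Fleury-style bridge-avoidance rule need never reach $e_n$. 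The paper's mechanism is different and is the combinatorial heart of the argument: take an arbitrary path to $e_n$, form the finite auxiliary graph consisting of its edges together with all edges lying in finite components of its complement, verify its degree parities, and apply Euler's theorem to that finite graph to obtain a repaired path with the same endpoints whose removal leaves no finite component (\Cref{lem:technical-lemma}). You need this repair step, or an equivalent one, and your sketch does not supply it.

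The second gap is the claim that the two-way case ``carries over verbatim.'' Your positive semidecision procedure for (ii) rests on $\G-E(R)$ having a unique infinite component, which is exactly the one-end hypothesis in $\os$; under $\e$ the complement of a path may legitimately have two infinite components, so ``no finite component'' is not obviously semidecidable on the positive side and the one-way procedure does not apply. The paper has to prove a second, equivalent characterization --- every connected component of $\G-t$ contains the initial or final vertex of $t$ (\Cref{prop:bi-extensible-alternative-characterization}) --- precisely to restore semidecidability by exhaustive path search, and flags this as the nontrivial point. Your parenthetical ``virtual edge $u_0u_m$'' reformulation is in fact equivalent to that characterization and would do the job, but you neither prove the equivalence (which uses the last clause of $\e$ and a Handshaking argument) nor connect it to the algorithm, so as written the two-way decidability is asserted rather than proved.
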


Observe that this result states the existence of \emph{at least one
}computable infinite Eulerian path. We prove a slightly stronger result:

\begin{maintheorem}\label{thm:right=000020bi=000020extensible=000020is=000020decidable}

In a moderately computable graph satisfying $\os$ (resp. $\e$),
it is algorithmically decidable whether a finite path can be extended
to a one-way (resp. two-way) infinite Eulerian path.

\end{maintheorem}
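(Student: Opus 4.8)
The plan is to reduce the decision problem to a finite, effectively checkable condition. Concretely, I would first establish a combinatorial characterization (this should be the content of an earlier lemma in the paper, which I may assume) of the form: a finite path $p$ in a graph $G$ satisfying $\os$ (resp. $\e$) can be extended to a one-way (resp. two-way) infinite Eulerian path if and only if the graph $G - E(p)$, together with the ``boundary'' data recording the endpoint(s) of $p$ and the parities/finiteness of degrees in $G-E(p)$, satisfies a certain local condition $\os'$ (resp. $\e'$). The point is that extendability of $p$ is equivalent to the existence of an infinite Eulerian path in each infinite component of $G - E(p)$ starting from the appropriate vertices, plus a finiteness/parity bookkeeping condition on the finitely many finite components of $G - E(p)$ (each of which must be traversable by a finite detour inserted at the right moment). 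So the first step is: \emph{make explicit the finite certificate for extendability.}

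Next I would argue that each ingredient of that certificate is decidable, using moderate computability. The degree function of $G$ is computable by hypothesis, and removing the finite edge set $E(p)$ changes finitely many degrees in an effective way, so the degree function of $G - E(p)$ is computable. The parity of every finite degree is then decidable, and ``degree is infinite or even'' is decidable. The harder ingredients are the topological ones: deciding the number of ends of $G - E(p)$, deciding connectivity of the relevant pieces, and checking the clause in $\e$ about $G - E$ having one infinite connected component for every finite even-degree-inducing edge set $E$. Here I would invoke the fact that $G$ satisfies $\os$ or $\e$ \emph{globally} — so the number of ends of $G$ is known (one, or one/two) — and that $E(p)$ is finite, to bound a priori the search: the components of $G - E(p)$ that matter are those meeting a bounded ball, and whether a component is infinite can be decided because an infinite computable component containing a given vertex can be certified by exhibiting, for every $n$, a path leaving the ball of radius $n$ — and \emph{non}-infiniteness is certified by exhibiting the whole finite component, whose size is bounded once we know (from the global hypothesis and $|E(p)|$) how many ends can survive. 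This is the step where I expect the main obstacle: a priori, ``is this computable subgraph finite?'' is only $\Sigma^0_1$ in general, so I must use the structural hypothesis $\os$/$\e$ on $G$ to get an effective bound on the number and size of the finite components of $G - E(p)$, turning the question into a terminating search. I would isolate this as a lemma: \emph{in a moderately computable graph satisfying $\os$ or $\e$, removing a given finite edge set yields a graph whose components, ends, and the validity of $\os'$/$\e'$ are uniformly decidable.}

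Finally I would assemble the decision procedure: on input a finite path $p$, compute $E(p)$; compute the (finitely many) modified degrees; locate the finitely many components of $G - E(p)$ that meet the endpoints of $p$ and, using the effective bound from the lemma, determine which are finite and which are infinite; verify the parity/finiteness conditions on all finite components and the end/connectivity conditions on the infinite one(s); and also check the separation clause of $\e$ (which, given a bound on relevant $E$, is a finite check). Output ``yes'' iff the certificate $\os'$ (resp. $\e'$) holds. Correctness is immediate from the combinatorial characterization of the first step; termination follows from the effective bounds of the second step. The only genuinely delicate point, as noted, is the effective bound on finite components after edge removal, and I would handle it by a pigeonhole argument on ends: each finite component of $G - E(p)$ that is ``new'' (i.e. was part of an infinite component of $G$) must be separated from the rest by edges of $E(p)$, so there are at most $O(|E(p)|)$ of them, and each has size at most some computable function of the ball radius needed to reconnect — made precise using the one/two-ends hypothesis on $G$.
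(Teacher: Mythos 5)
Your first step (reduce to the paper's finite characterization of extendable paths and then decide each clause) is the right architecture, and your handling of the degree/parity clauses is fine. The genuine gap is in the step you yourself flag as delicate: you claim that the finite connected components of $G-E(p)$ have size bounded by ``some computable function of the ball radius needed to reconnect,'' obtained by a pigeonhole on ends. The pigeonhole on ends bounds the \emph{number} of such components, not their \emph{size}, and no computable size bound exists: a one‑ended graph can have an arbitrarily large finite blob attached to the rest by a single edge of $p$, and nothing in the description of a moderately computable graph lets you compute in advance how far you must search before concluding that a component is infinite rather than merely large. (Your certificate of infiniteness --- a path leaving every ball of radius $n$ --- is a $\Pi^0_2$ condition, not a halting certificate, so without the bound your procedure does not terminate on ``yes'' instances.) Your characterization sketch also drifts: finite components of $G-t$ cannot be ``traversed by a detour inserted at the right moment,'' since $t$ is fixed and all edges joining such a component to the rest are already used by $t$; the correct clause is simply that no such component exists (respectively, that $G-t$ is connected).

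The paper closes this gap not with a size bound but with two complementary semi‑decision procedures run in parallel. For the one‑way case, one‑endedness gives the dichotomy ``$G-t$ is disconnected if and only if $G-t$ has a finite component''; both ``connected'' (exhaustive path search between the finitely many boundary vertices) and ``has a finite component'' (exhibit a finite vertex set, all of finite degree, with no incident edges leaving it --- checkable via the computable degree function) are $\Sigma^0_1$, so connectivity of $G-t$ is decidable (\Cref{lem:decidable-connected-complement-one-end}). For the two‑way case the analogous dichotomy fails when $G$ has two ends, and the paper proves a second characterization (\Cref{prop:bi-extensible-alternative-characterization}): under the endpoint conditions, ``$G-t$ has no finite component'' is equivalent to ``every component of $G-t$ contains an endpoint of $t$,'' which \emph{is} $\Sigma^0_1$ by path search. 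This equivalence is the key missing idea in your argument; without it (or the nonexistent size bound) your decision procedure does not terminate. A smaller point: in the one‑way case, deciding whether the initial vertex of $t$ is distinguished requires knowing whether $G$ has an odd‑degree vertex, which is not computable from the description of $G$; this is why the paper's algorithm is non‑uniform for $\os$, though it does not affect the per‑graph statement you were asked to prove.
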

\begin{remark}
\Cref{thm:right=000020bi=000020extensible=000020is=000020decidable}
guarantees that a graph $G$ as in the statement has \emph{many }computable
infinite Eulerian paths, in the sense that they are dense inside the
set of all infinite Eulerian paths. We sketch an argument. Let
$\os(G)$ (resp. $\e(G)$) be the set of all infinite one-way (resp.
two-way) Eulerian paths in $G$, but coded in a canonical way as a
subset of $\N^{\N}$. Then it follows directly from \Cref{thm:right=000020bi=000020extensible=000020is=000020decidable}
that the set $\os(G)$ (resp. $\e(G)$) has the following property: the set
of words in $\N^{\ast}$ such that the associated cylinder set intersect
$\os(G)$ (resp. $\e(G)$), is decidable. But it is a general fact
that a subset of $\N^{\N}$ with this property has a dense and recursively
enumerable set of computable points (see \cite[Proposition 2.3.2]{hoyrup_Genericity_2017}).
Each one of these points corresponds to a computable infinite Eulerian
path on $G$, so the claim follows. 
\end{remark}

\begin{remark}
In \Cref{thm:effective=000020egw} and \Cref{thm:right=000020bi=000020extensible=000020is=000020decidable},
the algorithms associated to moderately computable graphs satisfying
$\e$ are uniform on the graph (\Cref{prop:decidability-bi-extensible}).
This means that the graph is given to the algorithm as input, or that
the same algorithm works for all graphs under consideration. For graphs
satisfying $\os$ this is no longer the case, and our algorithms are
uniform only for highly computable graphs (\Cref{prop:decidability-right-extensible}).
The reader is referred to \Cref{subsec:Decidability-of-path-properties}
for details. We also mention that the proof of \Cref{thm:effective=000020egw}
for highly computable graphs given by D.Bean is also uniform.
\end{remark}

Our computability results are obtained by characterizing those finite
paths in a graph that can be extended to one-way and two-way infinite
Eulerian paths. We will now state this characterization, which may
be of independent interest. This result considers the following property
for vertices: 
\begin{definition}
\label{def:distinguished} Let $G$ be a graph satisfying $\os$.
The set of \textbf{distinguished }vertices of $G$ is the following.
If $G$ has one vertex with odd degree, then this is its only distinguished
vertex. If $G$ does not have one vertex with odd degree, then all
vertices with infinite degree in $G$ are distinguished.
\end{definition}

In the following two statements, $G-t$ denotes the graph obtained
by removing from $G$ all edges visited by the finite path $t$, and
then removing all vertices left with no incident edge (see also the
preliminaries).

\begin{maintheorem}\label{thm:e1-characterization-of-trails}

Let $\G$ be a graph satisfying $\os$. Then a finite path $t$ can
be extended to a one-way infinite Eulerian path if and only if it
satisfies the following three conditions:
\begin{enumerate}
	\item $\G-t$ is connected. 
	\item The initial vertex of $t$ is distinguished in $\G$.
	\item There is an edge $e$ incident to the final vertex of $t$ which was
	not visited by $t$. 
\end{enumerate}
Moreover, a vertex is the initial vertex of a one-way infinite Eulerian
path if and only if it is distinguished in $G$. 

\end{maintheorem}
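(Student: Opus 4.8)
The plan is to prove the equivalence in both directions, treating the ``only if'' direction as the easy bookkeeping and investing the real work in the ``if'' direction, which is essentially a back-and-forth construction that keeps the invariant ``$G-t$ satisfies $\os$'' alive at every finite stage.

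For the ``only if'' direction, suppose $t$ extends to a one-way infinite Eulerian path $P$. Condition (3) is immediate: $P$ continues past the final vertex of $t$, so there is an unvisited incident edge there. For condition (1), observe that $G-t$ is exactly the union of the edges of $P$ not yet used, and the tail of $P$ (from the end of $t$ onward) is a one-way infinite walk covering precisely those edges, hence $G-t$ is connected. For condition (2), I would argue that the initial vertex of $P$ must be distinguished: removing a large initial segment of $P$ from $G$ leaves a graph with at most one odd-degree vertex (the current ``head''), and a parity/ends count at the start of $P$ forces the initial vertex of $G$ to carry the odd degree if one exists, or otherwise to have had infinite degree (since a one-way Eulerian path starting at a finite-degree vertex with all degrees even and finite would, by Euler-type parity, be impossible in an infinite connected graph with one end unless the start absorbs the imbalance). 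This is where I would lean on \Cref{thm:egw} and the structure of $\os$ rather than reprove parity facts from scratch.

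For the ``if'' direction — the crux — assume $t$ satisfies (1)--(3). I claim that $G-t$, rooted appropriately, again satisfies the hypotheses needed to keep extending, and I build the infinite path greedily in stages. The key lemma to isolate and prove first is: \emph{if $G$ satisfies $\os$ and $v$ is a distinguished vertex, then there is an edge $e=vw$ such that, after removing $e$, the graph $G-e$ (with isolated vertices deleted) again satisfies $\os$ with $w$ distinguished} — or, if only finitely many edges remain, the process terminates in an Eulerian path. The edge $e$ must be chosen to avoid disconnecting $G$ into two infinite pieces (a ``bridge to an end'' must be taken last among a bounded batch), and to respect the parity/end bookkeeping so that $w$ inherits the distinguished role. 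The standard device is: among the edges at $v$, if some edge is not a ``dangerous'' bridge pick it; otherwise there is essentially one end-direction and one is forced to follow it, and one checks the invariants still hold. Iterating this lemma starting from the final vertex of $t$ (which by (3) still has an incident edge and, after the removal encoded by $t$, sits in the connected graph $G-t$ which one shows satisfies $\os$ with that vertex distinguished — here conditions (1) and (2) are exactly what is needed) produces an infinite walk; because at each stage we never strand an infinite component and we exhaust edges in a fair order, the union is a one-way infinite Eulerian path extending $t$.

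The main obstacle is the bridge analysis inside the key lemma: showing that from a distinguished vertex one can always pick a ``safe'' next edge that preserves connectivity of the infinite part \emph{and} preserves $\os$ (in particular the one-end condition and the parity/infinite-degree condition at the new head). The delicate sub-case is when $v$ has infinite degree and all-but-finitely-many incident edges lead into the same end: one must verify that removing any one of them keeps the graph connected with one end and keeps $v$ (still of infinite degree) or the new head distinguished. I would handle this by the observation that deleting a single edge changes the number of ends by at most a controlled amount and that an infinite-degree vertex cannot be a cut vertex separating two infinite components, so ``dangerous'' edges at a distinguished vertex are confined to a finite set that can be postponed. Once this lemma is in hand, the final sentence of the theorem (``a vertex is the initial vertex of a one-way infinite Eulerian path iff it is distinguished'') follows by applying the ``if'' direction to the trivial path $t=(v)$ — noting $G-t=G$ satisfies $\os$ and $v$ is distinguished, and that $v$ has an incident edge since $E(G)$ is infinite and $G$ connected — together with the ``only if'' direction's condition (2).
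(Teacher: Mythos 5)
Your overall architecture (maintain the invariant ``the remaining graph satisfies $\os$ with the current head distinguished'' and iterate) is the right one, and your ``only if'' direction and the degree bookkeeping in your key lemma are fine. But the ``if'' direction has a genuine gap: your key lemma only guarantees that at each stage \emph{some} safe edge can be traversed, and that alone does not make the resulting infinite path Eulerian. Nothing in the lemma lets you steer the walk through a \emph{prescribed} edge, so the greedy process can simply fail to visit some edge forever --- already in the one-vertex graph with countably many loops, every loop is ``safe'' at every stage, and a run that always picks an odd-indexed loop produces an infinite path missing the second loop. The phrase ``we exhaust edges in a fair order'' is precisely the missing content: you must prove that any right-extensible path extends to a longer right-extensible path \emph{that visits a given edge $e$}, and this is the actual crux of the theorem. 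It also interacts badly with your ``postpone dangerous bridges'' heuristic, since postponement is exactly the mechanism by which a target edge could be deferred indefinitely.

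The paper closes this gap by extending by whole finite paths rather than single edges: from the current head (distinguished in the remaining graph, which satisfies $\os$ by \Cref{lem:borrado-right-extensible}), take any path through the target edge, and apply Euler's theorem to the finite auxiliary graph formed by that path's edges together with all edges lying in finite components of its complement (\Cref{lem:technical-lemma}); a parity check shows this finite graph admits an Eulerian path with the correct endpoints, and removing that path strands no finite component. This single application of the finite Euler theorem replaces your delicate edge-level bridge analysis and simultaneously delivers the ``visits $e$'' requirement (\Cref{lem:existencia-right-extensible} and \Cref{lem:existencia-extension-right-extensible}). If you want to keep the edge-by-edge formulation, you still need an analogue of that statement --- a finite safe continuation from the current head traversing any given edge --- at which point you have reproved the paper's extension lemma in all but name.
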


\begin{maintheorem}\label{thm:e2-characterization-of-trails}

Let $\G$ be a graph satisfying $\e$. Then a finite path $t$ can
be extended to a two-way infinite Eulerian path if and only if it
satisfies the following three conditions:
\begin{enumerate}
	\item $\G-t$ has no finite connected components.
	\item There is an edge $e$ incident to the final vertex of $t$ which was
	not visited by $t$. 
	\item There is an edge $f\ne e$ incident to the initial vertex of $t$
	which was not visited by $t$. 
\end{enumerate}
\end{maintheorem}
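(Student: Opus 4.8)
The plan is to prove the two implications separately; the ``only if'' direction is short, while the ``if'' direction is a back-and-forth construction. \textbf{Necessity.} Suppose $P$ is a two-way infinite Eulerian path having $t$ as a contiguous subpath, and write $P=P^{-}\cdot t\cdot P^{+}$, with $P^{-}$ the backward ray ending at the initial vertex of $t$ and $P^{+}$ the forward ray starting at the final vertex. The first edge of $P^{+}$ witnesses (2) and the last edge of $P^{-}$ witnesses (3); since $P$ traverses each edge once and the three pieces are pairwise edge-disjoint, neither edge is visited by $t$ and the two are distinct. For (1), assume $C$ is a finite connected component of $\G-t$. Every edge of $\G$ incident with a vertex of $C$ lies in $E(C)$ or in $E(t)$; hence, the moment $P^{+}$ visits a vertex of $C$, its next edge can be neither in $E(t)$ (the pieces are edge-disjoint) nor an edge leaving $V(C)$, so $P^{+}$ remains in $V(C)$ forever. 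As $C$ has finitely many edges and $P^{+}$ is an infinite trail this is impossible, so $P^{+}$ never meets $V(C)$, and likewise $P^{-}$; then $P$ visits no edge of the nonempty set $E(C)\subseteq E(\G)$, a contradiction.

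\textbf{Sufficiency.} Fix an enumeration $E(\G)=\{g_{1},g_{2},\dots\}$ and construct a chain of finite paths $t=s_{0}\subseteq s_{1}\subseteq\cdots$, where $s_{n+1}$ extends $s_{n}$ by finitely many edges --- appended at the final vertex when $n$ is even and at the initial vertex when $n$ is odd --- in such a way that every $s_{n}$ satisfies (1)--(3) and $s_{n}$ visits $g_{n}$. The union of the $s_{n}$ is then a bi-infinite walk; it is a trail because only unvisited edges are ever appended, it is Eulerian by construction, and both of its rays are infinite because each extremity is extended infinitely often and, by (2) and (3), an unvisited edge is available at each extremity at every stage. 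Hence this union is a two-way infinite Eulerian path extending $t$, as desired.

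The construction rests on three lemmas, argued much as in the one-way case of \Cref{thm:e1-characterization-of-trails}. The \emph{structural lemma}: if $\G$ satisfies $\e$ and $s$ satisfies (1)--(3), then $\G-s$ has one or two connected components, each infinite and each containing an endpoint of $s$; indeed, if $\G-s$ had two components with both endpoints of $s$ in one of them, then joining those endpoints by a path $Q$ inside that component would make $E(s)\cup E(Q)$ a finite edge-set inducing a subgraph with all degrees even whose removal from $\G$ leaves two infinite components, contradicting $\e$. The \emph{one-step extension lemma}: if $s$ satisfies (1)--(3), then $s$ can be extended by a single unvisited edge at the final vertex, and also at the initial vertex, so that (1)--(3) still hold. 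Its proof is a finite case analysis driven by parity: since every degree of $\G$ is even or infinite, every vertex has an even or infinite number of unvisited edges, except possibly the two endpoints of $s$, which have an odd or infinite number --- so, when $s$ is an open path, an extremity has one, at least three, or infinitely many unvisited edges --- while a finite graph has an even number of odd-degree vertices; together with condition (1) these facts restrict the unvisited edges at a given extremity whose use would split off a finite component of $\G-s$ or leave the new extremity with no unvisited edge to so few that a legal choice always remains. Finally, the \emph{reachability lemma}: keeping (1)--(3), the chain can be steered so that a prescribed unvisited edge $g$ is visited after finitely many steps. By the structural lemma $g$ lies in the component of $\G-s$ containing one of the endpoints of $s$, and one extends that endpoint towards $g$, at each stage first traversing (via an Eulerian circuit, which exists because the relevant finite block of $\G-s$ is connected with all degrees even) any finite block that a direct approach would otherwise strand, and then crossing the bridge leading closer to $g$; only finitely many such bridges separate the endpoint from $g$, so the process terminates with $g$ visited.

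The main obstacle is the reachability lemma, and within it the guarantee that condition (1) survives an extension aimed at a far-away edge: this is exactly where the $\e$-specific hypotheses --- the bound on the number of ends and the clause on removal of finite even edge-sets --- are indispensable, both through the structural lemma and by forcing the finite blocks one must cross to be Eulerian. Granting the three lemmas, the alternating bookkeeping, the verifications that (2) and (3) persist, and the passage to the limit are routine.
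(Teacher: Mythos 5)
Your necessity argument is correct and complete, and your structural lemma is a sound variant of \Cref{prop:bi-extensible-alternative-characterization} (proved from the even-edge-set clause of $\e$ rather than from the theorem itself; note that your argument as written only treats the case where both endpoints lie in one component, and you still need the two-ends clause to rule out a component of $\G-s$ containing neither endpoint when the endpoints lie in distinct components). The overall skeleton --- an increasing chain of finite paths satisfying (1)--(3), alternating ends and hitting every edge --- is the same as the paper's. Where you diverge is in the granularity of the extension step, and that is exactly where the gap is: the two lemmas that carry all the difficulty, the one-step extension lemma and the reachability lemma, are asserted with justifications that do not establish them.

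Concretely: the one-step extension lemma is true, but ``a finite case analysis driven by parity \dots so few that a legal choice always remains'' is not a proof. Making it one requires analyzing the components of $H-v$ (where $H$ is the component of $\G-s$ containing the extremity $v$): a Handshaking argument shows a finite component not containing the other extremity is attached to $v$ by an even number $\geq 2$ of edges, loops at $v$ and circuits need separate treatment, and the case where the other extremity sits in a finite piece attached by a single edge must be excluded --- none of which is driven by parity at $v$ alone. Worse, the reachability lemma leans on ``traversing, via an Eulerian circuit, any finite block that a direct approach would otherwise strand,'' but for Euler's theorem to apply that block must be connected with all degrees even, and the extremities of $s$ may sit inside such a block with odd degree. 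Controlling those degrees is precisely the content of the paper's \Cref{lem:technical-lemma}, which applies Euler's theorem to the finite graph induced by the path's edges together with \emph{all} finite components of its complement, using the Handshaking lemma to pin down the parity at the two endpoints; the paper then extends by such a multi-edge chunk in \Cref{lem:extension-bi-extensible-trails}, with a three-way case split (circuit; open path with connected complement; open path with disconnected complement, which reduces to the one-way lemma \Cref{lem:existencia-right-extensible}). You have correctly identified that the survival of condition (1) is the main obstacle, but the argument you offer at that point is a gesture at the needed construction rather than the construction itself; supplying it essentially forces you to prove \Cref{lem:technical-lemma}. The ``routine'' bookkeeping also hides a real issue: your alternation prescribes which end to extend at stage $n$, while reachability forces you to extend at whichever end shares a component with $g_{n}$, and the case where some $s_{n}$ is a circuit (so both ``ends'' are the same vertex and (2)--(3) demand two distinct unvisited edges there) is never addressed.
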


These results also provide a relatively short proof of the Erdős,
Grünwald, and Weiszfeld theorem (\Cref{cor:egw-one-sided} and
\Cref{cor:egw-two-sided}). Naturally, our proof of \Cref{thm:right=000020bi=000020extensible=000020is=000020decidable}
goes by showing that our characterization for finite paths is decidable
when the graph is moderately computable. This is not at all trivial
when the graph has two ends, and in order to show the decidability
of the corresponding property we will need to prove a second characterization
of paths that can be extended to two-way infinite Eulerian paths  (\Cref{prop:bi-extensible-alternative-characterization}).

\subsection{Paper structure}

In \Cref{sec:Preliminaries} we review some basic facts and terminology
from graph theory. In \Cref{sec:egw} we prove our characterization
of those finite paths in a graph that can be extended to infinite
Eulerian paths, that is, \Cref{thm:e1-characterization-of-trails}
and \Cref{thm:e2-characterization-of-trails}. In \Cref{sec:computability}
we review some computability notions for infinite graphs, and prove
that our characterization for finite paths is decidable on moderately
computable graphs. From this we obtain \Cref{thm:effective=000020egw}
and \Cref{thm:right=000020bi=000020extensible=000020is=000020decidable}.
Finally, we make some remarks on the notion of moderately computable
graph in \Cref{sec:remarks}.

\subsection*{Acknowledgements}
The author is grateful to Rod Downey for pointing out the results
in Bean's paper. The author is also grateful to Vittorio Cipriani,
Vallentino Delle Rose, and Crist\'obal Rojas for fruitful discussions
which derived in significant improvements from a first version of
this article, in particular concerning \Cref{prop:bi-extensible-alternative-characterization}
and the notion of moderately computable graph. The author is also
grateful to Sebasti\'an Barbieri and Paola Rivera for their very
helpful comments. The author also thanks the anonymous reviewer for
their helpful suggestions. 

This research was partially supported by ANID CONICYT-PFCHA/Doctorado Nacional/2020-21201185, ANID/Basal National Center for Artificial Intelligence CENIA FB210017, and the European Union's Horizon 2020 research and innovation program under the Marie Sklodowska-Curie grant agreement No 731143.
\section{Preliminaries}\label{sec:Preliminaries}

\subsection{Graph theory}

Here we fix some graph theory terminology, and recall some well known
facts. The reader is referred to the books \cite{diestel_Graph_2017,bollobas_Modern_1998}.
For ends on infinite graphs, the reader is referred to \cite{diestel_Graphtheoretical_2003}. 

Throughout this paper we deal with finite and infinite undirected
graphs, where two vertices may be joined by multiple edges, and loops
are allowed. The vertex set of a graph $\G$ is denoted by $V(\G)$,
and its edge set by $E(\G)$. Each edge \textbf{joins} a pair of
vertices, and is said to be \textbf{incident} to these vertices. Two
vertices joined by an edge are called \textbf{adjacent}. A \textbf{loop}
is an edge joining a vertex to itself. The \textbf{degree} $\deg_{\G}(v)$
of the vertex $v$ is the number of edges in $G$ incident to $v$,
where loops are counted twice. 

It will be convenient for us to define paths as graph homomorphisms.
\textbf{A graph homomorphism} $f\colon\G'\to\G$ is a function which
sends vertices to vertices, edges to edges, and is compatible with
the incidence relation. 

We denote by $\llbracket a,b\rrbracket$ the graph with vertex set
$\{a,\dots,b\}\subset\Z$, and with edges $\{c,c+1\}$ joining $c$
to $c+1$, for $c\in\{a,\dots,b-1\}$. The graphs $\lb\N\rb$ and
$\lb\Z\rb$ are defined in a similar manner. We assume that $0$ belongs
to $\N$. 

A \textbf{path} (resp. \textbf{one-way infinite path}, \textbf{two-way
infinite path}) on $\G$ is a graph homomorphism $t\colon\lb a,b\rb\to\G$
(resp. $t\colon\lb\N\rb\to\G$, $t\colon\lb\Z\rb\to\G$) which does
not repeat edges. In these cases we say that $t$ \textbf{visits}
the vertices and edges in its image, and we call it \textbf{Eulerian}
when it visits every edge of $\G$ exactly once. Note that a path
is finite by definition, but we may emphasize this fact by writing
``finite path''.

Let $t\colon\lb a,b\rb\to\G$ be a path. We say that $t(a)$ is its
\textbf{initial vertex }and $t(b)$ is its \textbf{final vertex}.
We say that $t$ \textbf{joins} $t(a)$ to $t(b)$. A path is called
a \textbf{circuit} when its initial vertex is equal to its final vertex. 

The graph $G$ is said to be \textbf{connected} when every pair of
vertices is joined by a path. In this case, the vertex set $V(G)$
becomes a metric space. The distance $d_{G}$ between two vertices
is the length of the shortest path joining them, where the length
of a path is the number of edges that it visits. The distance from
a vertex to itself is defined as zero. 

In several constructions, we will consider subgraphs induced by sets
of edges. Given a set of edges $E\subset E(\G)$, the \textbf{ induced subgraph } $\G[E]$ is defined as follows. The edge set of $\G[E]$ is $E$, and its vertex set is the 
set of all vertices incident some edge in $E$. Given a set of edges
$E\subset E(\G)$, we denote by $\G-E$ the graph obtained by removing
from $G$ all edges in $E$, and then removing all vertices left with
no incident edge. In other words, $\G-E$ is the subgraph of $G$
induced by the set of edges $E(G)-E$. Given a path $t$, $\G-t$
denotes the graph obtained by removing from $G$ all edges visited
by $t$, and then removing all vertices left with no incident edge.
We say that $\G-t$ is obtained by \textbf{removing }the path $t$
from $\G$. We denote by $\deg_{t}(v)$ the degree of $v$ in the
subgraph of $G$ induced by the set of edges visited by $t$. 

A graph is \textbf{finite }if its edge set is finite, and \textbf{locally
finite} if every vertex has finite degree. A graph is \textbf{simple
}if it has no loops and every pair of vertices is joined by at most
one edge. A graph is said to be \textbf{even} if every vertex has
finite and even degree. A \textbf{connected component }in $\G$ is
a connected subgraph of $\G$ which is maximal for the subgraph relation.
The \textbf{number of ends} of the graph $\G$ is the supremum of
the number of infinite connected components that we can obtain by
removing from $G$ a finite set of edges.

We recall that Euler's theorem\textbf{ }asserts that a finite and
connected graph $\G$ admits an Eulerian circuit if and only if all
vertices have even degree, and an Eulerian path from a vertex $u$
to $v\ne u$ if and only if these are the only vertices in $G$ with
odd degree. The Handshaking lemma\textbf{ }asserts that the sum of
all vertex degrees of a finite graph $G$ equals twice the cardinality
of $E(G)$, and in particular is an even number.

\subsection{Concatenation and inversion of paths}

We introduce some ad hoc terminology regarding ``pasting'' paths.
Let $t\colon\lb a,b\rb\to\G$ and $s\colon\lb c,d\rb\to\G$ be edge
disjoint paths. If the final vertex of $t$ is also the initial vertex
of $s$, the \textbf{concatenation of $s$ at the right} of $t$ is
the path whose domain is $\lb a,b+d-c\rb$, whose restriction to $\lb a,b\rb$
equals $t$, and whose restriction to $\lb b,b+d-c\rb$ follows the
same path as $s$, but with the domain shifted. %
{} If the final vertex of $s$ coincides with the initial vertex of
$t$, we define the \textbf{concatenation of $s$ at the left of $t$
}as the path whose domain is $\lb a-(d-c),b\rb$, which on $\lb a-(d-c),a\rb$
follows the path of $s$ but with the domain shifted, and on $\lb a,b\rb$
follows the path of $t$. 

We say that a finite or infinite path \textbf{extends} the finite
path $t$ if its restriction to the domain of $t$ is equal to $t$.
For example, if we concatenate a path at the right or left of $t$,
we obtain a path that extends $t$. Finally, we define the \textbf{inverse
}of $t$, denoted $-t$, as the path with domain $\lb-b,-a\rb$ and
which visits the vertices and edges visited by $t$ in but in inverse
order.

\section{A characterization of finite paths which can be extended to infinite
Eulerian paths}\label{sec:egw}

In this section we prove our characterization of those finite paths
in a graph that can be extended to one-way and two-way infinite Eulerian
paths. That is, \Cref{thm:e1-characterization-of-trails} and
\Cref{thm:e2-characterization-of-trails}. We start by proving
a technical result. 
\begin{lemma}
\label{lem:technical-lemma}Let $\G$ be an infinite and connected
graph, and let $t$ be a path on $\G$ such that every vertex in $G$
different from the initial and final vertex of $t$ has either even
or infinite degree in $\G$. Then there is a path that  visits all
vertices and edges visited by $t$, with the same initial and final
vertices as $t$, and whose removal from $\G$ leaves no finite connected
component. 
\end{lemma}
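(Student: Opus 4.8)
The plan is to reroute $t$ so that it swallows every finite piece that $G-t$ could leave dangling. Since $t$ visits only finitely many edges, $G-t$ is obtained from the connected graph $\G$ by deleting finitely many edges, so it has only finitely many connected components; let $C_1,\dots,C_k$ be the finite ones. I would set
\[
H \;:=\; \G\bigl[\,E(t)\cup E(C_1)\cup\dots\cup E(C_k)\,\bigr],
\]
and produce the desired path as an Eulerian trail of $H$ running from the initial vertex of $t$ to its final vertex — an Eulerian circuit based at that vertex, when $t$ is a circuit. Granting this, take $t'$ to be such a trail. It visits all vertices and edges visited by $t$, it has the same endpoints as $t$, and, since it uses exactly the edges of $t$ together with all edges of all finite components of $G-t$, the graph $\G-t'$ is just $\G-t$ with its finite components deleted; as the remaining components of $\G-t$ are all infinite, $\G-t'$ has no finite connected component.

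So the real content is that $H$ carries the required Eulerian trail. Finiteness is immediate, and $H$ is connected: the image of $t$ is connected, each $C_i$ is connected, and each $C_i$ meets the image of $t$ — because $\G$ is connected while $C_i$ is finite, some edge of $\G$ joins $V(C_i)$ to its complement, and such an edge cannot lie in $\G-t$ (else its other endpoint would be in $C_i$), hence it is an edge of $t$. For the parity conditions I would argue: a vertex lying in some $C_i$ has finite degree in $\G$ (its degree in $\G-t$ is finite since $C_i$ is finite, and $t$ contributes only finitely many more edges), and for such a vertex $w$ one has $\deg_H(w)=\deg_t(w)+\deg_{\G-t}(w)=\deg_\G(w)$, while for $w$ in the image of $t$ but in no $C_i$ one has $\deg_H(w)=\deg_t(w)$. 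In either case, if $w$ is not an endpoint of $t$ then $\deg_H(w)$ is even — in the first case by the hypothesis on $\G$ (and $\deg_t(w)$ is even there), in the second because $\deg_t(w)$ is even at a non-endpoint of $t$. Thus the only vertices of $H$ that can have odd degree are the (at most two) endpoints of $t$, so Euler's theorem yields an Eulerian circuit when $t$ is a circuit, and an Eulerian trail between the two endpoints in the remaining case — once we know those endpoints really are the odd-degree vertices of $H$.

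The step I expect to be the main obstacle is exactly that last proviso: when $t$ is not a circuit, we must guarantee that both endpoints of $t$ genuinely have odd degree in $H$, so that Euler's theorem delivers a trail between them rather than merely a circuit. The danger is that an endpoint, say $u$, lies inside some finite component $C_i$ with $\deg_\G(u)$ even, which would make $\deg_H(u)$ even; a handshake count inside $C_i$ then forces the other endpoint to lie in the same component and $C_i$ to carry an Eulerian path between the two endpoints. I would resolve this by analysing the excursions that $t$ makes out of $V(C_i)$ and back, and splicing an Eulerian path of $C_i$ together with those excursions in the manner of the standard Eulerian-path construction, so that $C_i$ is still swallowed while the initial and final vertices are preserved — or, failing that, by rerouting slightly through the infinite component to which $C_i$ attaches. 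Everything else in the argument is bookkeeping.
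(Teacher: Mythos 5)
Your construction is exactly the paper's: the auxiliary graph you call $H$ is the paper's $G'$ (the subgraph induced by the edges of $t$ together with the edges of the finite components of $G-t$), and your finiteness, connectivity, and degree computations all check out. But the case you flag at the end as "the main obstacle" is not a technicality that splicing or rerouting can remove: it is a genuine obstruction, and in that configuration the statement itself fails. Take $G$ to be the triangle on $u,x,v$ with edges $e_1=ux$, $e_2=xv$, $e_3=uv$, together with an infinite ray attached at $x$ in which consecutive vertices are joined by two parallel edges, so that $x$ and every ray vertex has degree $4$ while $\deg_G(u)=\deg_G(v)=2$; let $t$ be the path $u\xrightarrow{e_1}x\xrightarrow{e_2}v$. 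The hypotheses of the lemma hold, and $(\{u,v\},\{e_3\})$ is a finite component of $G-t$, so $H$ is the triangle and all its degrees are even. Now suppose $t'$ were any path from $u$ to $v$ visiting $e_1$ and $e_2$. Since $u\neq v$, the initial vertex $u$ must have odd degree in $t'$; as $u$ is incident only to $e_1$ and $e_3$ and $t'$ must use $e_1$, this forces $\deg_{t'}(u)=1$, so $t'$ omits $e_3$; likewise $v$ is left incident only to $e_3$. Hence $(\{u,v\},\{e_3\})$ is a finite component of $G-t'$ no matter how $t'$ detours elsewhere, and no path with the required endpoints exists.

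You have in fact located a hole in the paper's own argument, not just in yours: the paper dismisses this case by asserting that a failure of the endpoint-degree claim would leave $G'$ with \emph{exactly one} vertex of odd degree, contradicting the Handshaking lemma, whereas (as your own handshake count inside $C_i$ shows) the failures come in pairs, producing \emph{zero} odd vertices and no contradiction. The lemma therefore needs either an extra hypothesis or a weaker conclusion. Both repairs match how it is actually invoked: in the applications to $\os$ the initial vertex is distinguished, hence of odd or infinite degree in the ambient graph, which is incompatible with the bad configuration (there the initial vertex would have finite even degree); and where that fails one can still extract an Eulerian \emph{circuit} of $H$ (all degrees being even), i.e., a replacement path that swallows the finite components but is allowed to forget the endpoints of $t$. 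As written, though, your proof cannot be completed, because the statement being proved is false in the very case you isolated.
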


\begin{proof}
Let $\G'$ be the subgraph of $\G$ induced by the edges visited by
$t$ and the edges in finite connected components of $\G-t$. Observe
that $G'$ is finite as there is at most one finite connected components
in $G-t$ for each vertex visited by $t$. It is clear that $G'$
is connected, and that $G-E(G')$ has no finite connected component.
{} 

Let $u$ be the initial vertex of $t$, and let $v$ be the final
vertex of $t$. We will show that all vertices in $\G'$ different
from $u$ or $v$ have even degree in $\G'$. Then we will show that
$u$ and $v$ have both even degree in $\G'$ if they are equal, and
otherwise both have odd degree in $\G'$. This proves the statement,
as it suffices to apply Euler's theorem to the graph $G'$. Indeed,
an Eulerian path on $G'$ (which could be a circuit) has the 
properties mentioned in the statement. 

Let $w$ be a vertex in $\G'$ not visited by $t$, so $\deg_{G}(w)=\deg_{G'}(w)$.
As $\G'$ is a finite graph it follows that $\deg_{\G}(w)$ is finite,
and then the hypothesis on $G$ implies that $\deg_{G'}(w)$ is an
even number. Now let $w$ be a vertex visited by $t$, but different
from $u$ and $v$. We verify that $\deg_{\G'}(w)$ is finite and
even. Indeed, there is at most one connected component of $\G-t$
containing $w$. If this connected component is infinite, then there
is no finite connected component of $\G-t$ containing $w$, and thus
$\deg_{\G'}(w)=\deg_{t}(w)$, a finite and even number. If this connected
component is finite then it follows that $\deg_{\G}(v)$ is finite,
and thus it is even by our hypothesis on $G$. It follows that $\deg_{\G'}(v)$
is also even. We finally consider $u$ and $v$. If our claim on the
degrees of $u$ and $v$ on $G'$ fails then $\G'$ would have exactly
one vertex with odd degree, contradicting the Handshaking lemma. As
explained in the previous paragraph, the statement now follows by
applying Euler's theorem to the graph $G'$. 
\end{proof}

\subsection{The case of one-way infinite paths}

In this subsection, we prove \Cref{thm:e1-characterization-of-trails}.
For this purpose we introduce the following definition:
\begin{definition}
\label{def:right-extensible} Let $\G$ be a graph satisfying $\os$.
We say that a path $t$ is \textbf{right-extensible} in $\G$ if the
following three conditions hold:
\begin{enumerate}
	\item $\G-t$ is connected. 
	\item The initial vertex of $t$ is distinguished in $\G$.
	\item There is an edge $e$ incident to the final vertex of $t$ which was
	not visited by $t$. 
\end{enumerate}
\end{definition}

\Cref{thm:e1-characterization-of-trails} asserts that a path
can be extended to a one-way infinite Eulerian path if and only if
it is right-extensible. The structure of our proof is very simple,
we show that right-extensible paths exist, and that they can be extended
to larger right-extensible paths. This allows an iterative construction.
A key idea is that the removal of a right-extensible path from a graph
satisfying $\os$ leaves a graph satisfying $\os$. 
\begin{lemma}
\label{lem:borrado-right-extensible}Let $\G$ be a graph satisfying
$\os$. If $t$ is a right-extensible path in $\G$, then $\G-t$
also satisfies $\os$. Moreover, the final vertex of $t$ is distinguished
in $\G-t$.
\end{lemma}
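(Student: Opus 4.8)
The plan is to verify the four conditions in the definition of $\os$ one at a time for the graph $\G-t$, plus the extra claim about the final vertex of $t$. Write $u$ for the initial vertex and $v$ for the final vertex of $t$, and $G' := G - t$.

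\emph{Countability and infiniteness of $E(G')$.} Since $t$ is a finite path it visits only finitely many edges, so $E(G') = E(G) \setminus (\text{edges of }t)$ is still countable and infinite; this is immediate.

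\emph{Connectedness of $G'$.} This is exactly condition (1) in the definition of right-extensible, so there is nothing to prove.

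\emph{The parity/infinite-degree condition.} Here I would compute $\deg_{G'}(w)$ for each vertex $w$ of $G'$ via $\deg_{G'}(w) = \deg_G(w) - \deg_t(w)$. For $w \notin \{u,v\}$, the path $t$ enters and leaves $w$ the same number of times, so $\deg_t(w)$ is even; hence $\deg_{G'}(w)$ has the same parity as $\deg_G(w)$ (or is infinite if $\deg_G(w)$ is infinite and $w$ survives in $G'$). For the endpoints $u$ and $v$: if $u \ne v$, then $\deg_t(u)$ and $\deg_t(v)$ are odd and the other $\deg_t(w)$ are even; if $u=v$ all $\deg_t(w)$ are even. Now I split on which case of $\os$ the graph $G$ is in. If $G$ has a unique odd-degree vertex $z$, then by definition $z$ is the only distinguished vertex, so condition (2) forces $u=z$; then a finite Eulerian-type path $t$ starting at $z$... but $t$ need not be Eulerian, it is just a finite path. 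Still, $t$ starts at the unique odd vertex $u=z$: if $t$ ends at $v \ne u$, then in $G'$ the vertices $u$ and $v$ each lose an odd amount and everyone else loses an even amount, so $G'$ has exactly one odd-degree vertex, namely $v$ (note $u=z$ becomes even), unless $v$ has infinite degree in $G$. If $t$ is a closed path, $u=v=z$ stays odd, and again $G'$ has exactly one odd vertex. Either way $G'$ has exactly one odd-degree vertex (when it has any), matching the first alternative of the third bullet — and if instead all odd-degree behaviour has been absorbed into an infinite-degree vertex, the second alternative applies because $G$ (hence $G'$, being infinite and connected with a bounded-size "finite part" removed) still has a vertex of infinite degree and I must check no odd vertices remain. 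In the second case of $\os$ ($G$ has an infinite-degree vertex and no odd vertices), all $\deg_G(w)$ are even or infinite; removing $t$ keeps every finite-degree vertex even except possibly $u$ and $v$, which become odd iff $u \ne v$. So I need condition (2): $u$ is distinguished, meaning $\deg_G(u) = \infty$. Then $\deg_{G'}(u)$ is still infinite (an infinite-degree vertex cannot lose all but finitely many edges to a finite path — careful: it could, if $t$ visited it infinitely often, but $t$ is finite, so no). Hence $u$ does not contribute an odd finite degree; the only possible odd finite vertex is $v$. Thus $G'$ has at most one odd vertex, and if it has none, I still must exhibit an infinite-degree vertex in $G'$ — which exists since $G$ has one and $t$ removes only finitely many edges. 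This is the step I expect to be the main obstacle: carefully bookkeeping all the endpoint/loop cases and confirming an infinite-degree vertex survives, so that $G'$ lands in exactly one of the two alternatives of the third bullet.

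\emph{The one-end condition, and the final claim.} I would argue $G'$ has one end using condition (1) (connectedness of $G'$) together with the fact that $G$ has one end: any finite edge set removed from $G'$ corresponds, together with the edges of $t$, to a finite edge set removed from $G$, whose complement has a unique infinite component; since $t$ is finite this does not create a second infinite component in $G'$. (More precisely: if $G' - F$ had two infinite components for some finite $F$, then $G - (F \cup E(t))$ would too, contradicting that $G$ has one end.) Finally, for the moreover-clause: condition (3) of right-extensibility says some edge $e$ incident to $v$ was not visited by $t$, so $v \in V(G')$. By the degree analysis above, if $u \ne v$ then $v$ is an odd-degree vertex of $G'$ (when $\deg_G(v)$ is finite) or an infinite-degree vertex of $G'$ (when $\deg_G(v) = \infty$); in the first scenario $v$ is the unique odd vertex hence distinguished, and in the second $G'$ has no odd vertices so all infinite-degree vertices — including $v$ — are distinguished. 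If $u = v$, then $u=v$ is distinguished in $G$: either it is the unique odd vertex (and stays odd in $G'$, hence is the unique odd vertex of $G'$, hence distinguished) or it has infinite degree (and keeps it in $G'$, and $G'$ has no odd vertices, so it is distinguished). In all cases $v$ is distinguished in $G'$, which completes the proof.
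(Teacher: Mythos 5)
Your proof is correct and takes the same approach as the paper: the paper's own proof of this lemma is a one-line remark that the result follows from a case-by-case review of the degrees of the initial and final vertices of $t$ in $G$ and in $G-t$, and your write-up supplies exactly that case analysis (together with the routine verifications of countability, connectedness via condition (1), and the one-end condition) in full detail.
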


\begin{proof}
The proof of this result is a case by case review of the vertex degree
of the initial and final vertex of $t$, in both $G$ and $G-t$. 
\end{proof}

\begin{lemma}
\label{lem:existencia-right-extensible}Let $\G$ be a graph satisfying
$\os$, and let $v$ be a distinguished vertex in $\G$. Then for
every edge $e$ there is a right-extensible path on $\G$ whose initial
vertex is $v$ and which visits $e$. 
\end{lemma}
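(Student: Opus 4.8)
The plan is to construct the desired path explicitly and then verify the three conditions of \Cref{def:right-extensible}, using \Cref{lem:technical-lemma} as the main tool.

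First I would produce \emph{some} finite path $p$ from $v$ that visits $e$. Since $\G$ is connected, take a geodesic from $v$ to an endpoint of $e$; a geodesic repeats no vertex, hence no edge, so it is a path. If this geodesic already traverses $e$ we take $p$ to be it; otherwise we concatenate $e$ at its right end. In either case $p$ is a path with initial vertex $v$ that visits $e$; denote its final vertex by $w$.

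Next I would apply \Cref{lem:technical-lemma} to $p$. Its hypothesis is that every vertex of $\G$ other than the endpoints of $p$ has even or infinite degree, and this follows from $\os$: either $\G$ has no vertex of odd degree at all, or its unique odd-degree vertex is the distinguished vertex $v$, which is the initial vertex of $p$. The lemma then yields a path $q$ with the same initial vertex $v$ and the same final vertex $w$ as $p$, visiting all edges visited by $p$ (so in particular $e$), and such that $\G - q$ has no finite connected component. Since $q$ visits only finitely many edges and $\G$ has one end by $\os$, the graph $\G - q$ has at most one infinite connected component; and since $E(\G)$ is infinite while $E(q)$ is finite, $\G - q$ is nonempty. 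Therefore $\G-q$ is connected, which is condition~(1); condition~(2) holds because the initial vertex of $q$ is $v$, which is distinguished in $\G$.

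For condition~(3) I would argue by parity that $w$ has an incident edge not visited by $q$. If it did not, then $\deg_{\G}(w) = \deg_{q}(w)$, which is finite since $q$ visits finitely many edges. In the subgraph of $\G$ induced by the edges visited by $q$, the path $q$ is an Eulerian trail, so the odd-degree vertices of this subgraph are exactly the two endpoints of $q$ when these differ, and there are none when $q$ is a circuit. Hence $\deg_{\G}(w)$ is even when $w = v$ and odd when $w \ne v$. In the first case $v$ would be neither of infinite degree nor the unique odd-degree vertex of $\G$, contradicting that it is distinguished; in the second case $w$ would be the unique odd-degree vertex of $\G$, forcing $w = v$, again a contradiction. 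So $w$ has an unvisited incident edge, and $q$ is right-extensible, has initial vertex $v$, and visits $e$, as required.

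The only genuinely nontrivial point — and thus the main obstacle — is passing from ``$\G - q$ has no finite connected component'' to ``$\G - q$ is connected''; this is precisely where the one-end clause of $\os$ enters, and it is what forces the argument to be routed through \Cref{lem:technical-lemma} rather than using the crude path $p$ directly. The parity bookkeeping in the last paragraph is routine but must be stated via the induced subgraph so that loops are accounted for correctly.
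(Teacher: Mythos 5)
Your proposal is correct and follows essentially the same route as the paper: take any path from $v$ through $e$, upgrade it via \Cref{lem:technical-lemma} so that its removal leaves no finite component (hence, by the one-end clause of $\os$, a connected graph), and establish condition (3) by the circuit/non-circuit parity case split against the definition of a distinguished vertex. You spell out a few points the paper leaves implicit (the hypothesis check for \Cref{lem:technical-lemma} and the nonemptiness of $\G-q$), but the argument is the same.
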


\begin{proof}
As $\G$ is connected there is a path $t\colon\lb0,c\rb\to\G$ with
$t(0)=v$ and which visits $e$. By \Cref{lem:technical-lemma}
we can assume that the removal of $t$ leaves no finite connected
component, and thus $G-t$ is a connected graph. 

We claim that $t$ is right-extensible. The first and second conditions
in the definition hold by our choice of $t$. For the third condition
we separate the cases where $t$ is a circuit or not. If $t$ is a
circuit then $\deg_{t}(t(b))$ is even while $\deg_{G}(t(b))$ is
either odd or infinite, as $t(0)=t(b)$ is distinguished. If $t$
is not a circuit then $\deg_{t}(t(b))$ is odd while $\deg_{\G}(t(b))$
is either even or infinite. In both cases it follows that $t(b)$
has edges in $\G$ not visited by $t$, so the third condition in
the definition of right-extensible is also satisfied. 
\end{proof}
\begin{lemma}
\label{lem:existencia-extension-right-extensible}Let $\G$ be a graph
satisfying $\os$. Then for every right-extensible path $t$ and edge
$e$ there is a path which is right-extensible on $G$, extends $t$,
and visits $e$. 
\end{lemma}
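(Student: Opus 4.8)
The plan is to reduce the statement to the two preceding lemmas. First I would dispose of the trivial case: if the edge $e$ is already visited by $t$, then $t$ itself extends $t$, visits $e$, and is right-extensible by hypothesis, so there is nothing to prove. So assume from now on that $e$ is not visited by $t$, which means that $e$ is an edge of $\G - t$.

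Since $t$ is right-extensible in $\G$, \Cref{lem:borrado-right-extensible} tells us that $\G' := \G - t$ again satisfies $\os$, and that the final vertex $v$ of $t$ is distinguished in $\G'$. (Note that $v$ really is a vertex of $\G'$: the third condition in the definition of right-extensible provides an edge incident to $v$ which is not visited by $t$, so $v$ survives the deletion.) Now I would apply \Cref{lem:existencia-right-extensible} inside the graph $\G'$, to the distinguished vertex $v$ and the edge $e \in E(\G')$: this yields a path $s$ which is right-extensible in $\G'$, has initial vertex $v$, and visits $e$.

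The path we want is $t'$, the concatenation of $s$ at the right of $t$. This is well-defined: $s$ is a path in $\G' = \G - t$, so $s$ and $t$ are edge-disjoint, and the final vertex of $t$ equals the initial vertex $v$ of $s$. By construction $t'$ extends $t$ and visits $e$. It remains to verify the three conditions of \Cref{def:right-extensible} for $t'$ in $\G$. For condition (1): the set of edges visited by $t'$ is the union of the edges visited by $t$ with those visited by $s$, hence $\G - t' = \G' - s$, which is connected since $s$ is right-extensible in $\G'$. For condition (2): the initial vertex of $t'$ is the initial vertex of $t$, which is distinguished in $\G$ because $t$ is right-extensible. For condition (3): let $f$ be an edge incident to the final vertex of $s$ which is not visited by $s$; it exists because $s$ is right-extensible in $\G'$. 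Then $f$ is an edge of $\G'$, so it is not visited by $t$ either, hence not visited by $t'$, and it is incident to the final vertex of $t'$. Thus $t'$ is right-extensible, which completes the proof.

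The only bookkeeping worth noting is the identity $\G - t' = (\G - t) - s$ and the observation in condition (3) that an edge witnessing right-extensibility of $s$ in $\G'$ automatically witnesses the third condition for $t'$ in $\G$. I do not anticipate any genuine obstacle, since the substantive work has already been carried out in \Cref{lem:borrado-right-extensible} and \Cref{lem:existencia-right-extensible}.
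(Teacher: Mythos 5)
Your proof is correct and follows essentially the same route as the paper: apply \Cref{lem:borrado-right-extensible} to see that $\G-t$ satisfies $\os$ with the final vertex of $t$ distinguished, obtain $s$ from \Cref{lem:existencia-right-extensible}, and concatenate. Your explicit treatment of the case where $e$ is already visited by $t$ (which the paper glosses over, even though \Cref{lem:existencia-right-extensible} only applies to edges of $\G-t$) and your verification of the three conditions for $t'$ are welcome details, not deviations.
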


\begin{proof}
By \Cref{lem:borrado-right-extensible} the graph $\G-t$ satisfies
$\os$ and contains the final vertex of $t$ as a distinguished vertex.
Now by \Cref{lem:existencia-right-extensible} the graph $\G-t$
admits a right-extensible path $s:\lb0,c\rb\to\G-t$ which starts
at the final vertex of $t$, and which visits $e$. Thus the path
$t':\lb0,b+c\rb\to\G$ obtained by concatenating $s$ at the right
of $t$ is right-extensible in $\G$, visits $e$, and extends $t$. 
\end{proof}
We are now ready to prove \Cref{thm:e1-characterization-of-trails}. 

\begin{proposition}[\Cref{thm:e1-characterization-of-trails}]
\label{prop:extensibility-one-sided}Let $\G$ be a graph satisfying
$\os$. Then a path on $\G$ is right-extensible if and only if it
can be extended to a one-way infinite Eulerian path. Moreover, a vertex
is distinguished in $\G$ if and only if it is the initial vertex
of a one-way infinite Eulerian path on $\G$.
\end{proposition}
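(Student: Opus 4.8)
The plan is to prove the two directions of \Cref{prop:extensibility-one-sided} separately, relying on the three lemmas just established. For the forward direction, suppose $t$ can be extended to a one-way infinite Eulerian path $T\colon\lb\N\rb\to\G$. I would verify each of the three conditions in \Cref{def:right-extensible} directly from the structure of $T$. For condition (1), since $T$ is Eulerian, the edges of $\G-t$ are exactly those visited by the infinite tail of $T$ starting at the final vertex of $t$; this tail is a connected walk covering all of them, so $\G-t$ is connected. For condition (3), $T$ must visit some edge after position $b$, and the first such edge is incident to the final vertex $t(b)$ and not visited by $t$. For condition (2), I would argue that the initial vertex of any one-way infinite Eulerian path must be distinguished: a parity/counting argument at each vertex shows that every vertex other than $T(0)$ has even or infinite degree contributed consistently, while $T(0)$ has the ``unbalanced'' parity — precisely, if $v\ne T(0)$ has finite degree then each visit to $v$ uses two edges, so $\deg_\G(v)$ is even; hence if $\G$ has a vertex of odd degree it must be $T(0)$, and if $\G$ has no odd-degree vertex then $T(0)$ must have infinite degree (else the tail from some point on would be a finite Eulerian path in a finite even graph starting and not returning, impossible). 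This establishes the ``moreover'' clause in one direction as well.

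For the reverse direction, suppose $t$ is right-extensible. The idea is the iterative construction already advertised: using \Cref{lem:existencia-extension-right-extensible}, I would build an increasing sequence of right-extensible paths $t = t_0 \subset t_1 \subset t_2 \subset \cdots$, where at stage $n$ I extend $t_{n-1}$ to a right-extensible path $t_n$ that visits the $n$-th edge of $\G$ in some fixed enumeration $e_0, e_1, e_2, \dots$ of $E(\G)$ (this enumeration exists since $E(\G)$ is countable by $\os$). Each $t_n$ genuinely extends $t_{n-1}$ in the sense of the paper's definition, and the lengths $|t_n|$ are strictly increasing (since each $t_n$ visits at least one new edge), so the union $T = \bigcup_n t_n$ is a well-defined graph homomorphism $\lb\N\rb \to \G$ that does not repeat edges — a one-way infinite path. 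It visits every edge because $e_n$ is visited by $t_n$ and hence by $T$; it repeats no edge because each $t_n$ is a path. Therefore $T$ is a one-way infinite Eulerian path extending $t$.

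Finally, for the remaining half of the ``moreover'' clause: given a distinguished vertex $v$, I need a one-way infinite Eulerian path starting at $v$. By \Cref{lem:existencia-right-extensible} (taking any edge $e$, using that $E(\G)$ is nonempty) there is a right-extensible path $t$ with initial vertex $v$; applying the reverse direction just proved, $t$ extends to a one-way infinite Eulerian path, which starts at $v$. Combined with the forward direction's parity argument (every initial vertex of such a path is distinguished), this gives the equivalence.

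I expect the main obstacle to be the forward direction's condition (2) — showing the initial vertex must be distinguished. The parity bookkeeping needs care because $\G$ may have vertices of infinite degree, so the usual ``even graph $\Rightarrow$ Eulerian circuit'' intuition must be handled via the two cases in \Cref{def:distinguished}: when $\G$ has a unique odd-degree vertex $w$, one shows $T(0)=w$ by noting every other finite-degree vertex is entered and left in pairs by $T$; when $\G$ has no odd-degree vertex, one must rule out $T(0)$ having finite (even) degree, which would force, after $T$ last visits a neighborhood, a contradiction with connectivity or with the edge count. The iterative construction in the reverse direction and the ``moreover'' consequences are then routine given the lemmas.
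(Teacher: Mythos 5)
Your proposal is correct and follows essentially the same route as the paper: the reverse direction is the identical iterative construction via \Cref{lem:existencia-extension-right-extensible} along an enumeration of $E(\G)$, the forward direction checks the three conditions of \Cref{def:right-extensible} directly on a truncation of $T$ together with the parity argument for distinguishedness, and the ``moreover'' clause is handled exactly as in the paper via \Cref{lem:existencia-right-extensible}. (One tiny imprecision: $t_n$ need not visit a \emph{new} edge at every stage if $e_n$ was already covered, but the domains still tend to infinity since $E(\G)$ is infinite, so the union is indeed a one-way infinite path.)
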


\begin{proof}
We first prove that an arbitrary path $t$ which is right-extensible
on $G$, can be extended to a one-way infinite Eulerian path on $G$.
Let $(e_{n})_{n\in\N}$ be a numbering of $E(G)$. We iterate \Cref{lem:existencia-extension-right-extensible}
to obtain a sequence of paths $(t_{n})_{n\in\N}$ such that $t_{0}$
extends $t$, and such that for each $n$:
\begin{enumerate}
	\item The path $t_{n}$ is right-extensible.
	\item The path $t_{n}$ visits $e_{n}$.
	\item The path $t_{n+1}$ extends $t_{n}$. 
\end{enumerate}
This sequence defines a one-way infinite Eulerian path on $G$ which
extends $t$. 

Now let $v$ be a distinguished vertex. Then \Cref{lem:existencia-right-extensible}
shows that it is the initial vertex of a right-extensible path, and we already proved that such a path can be extended to a one-way infinite Eulerian path. It follows that $v$ is the initial vertex of a one-way infinite Eulerian path. 

For the remaining implications, let $T$ be a one-way infinite Eulerian
path on $\G$. We claim that $T(0)$ is distinguished. Indeed, it
is clear that $T(0)$ can not have finite even degree, and that if
$T(0)$ has finite degree then every other vertex has even or infinite
degree. That is, the initial vertex of $T$ is distinguished. Moreover, it is clear from the definition that the restriction
of $T$ to a domain of the form $\lb0,n\rb$ is a right-extensible
path.
\end{proof}
As mentioned in the introduction, we obtain the Erdős, Grünwald, and
Weiszfeld theorem as a corollary of our results about paths. 
\begin{corollary}[Erdős, Grünwald, and Weiszfeld theorem for one-way infinite Eulerian
paths]
\label{cor:egw-one-sided}A graph admits a one-way infinite Eulerian
path if and only if it satisfies $\os$. 
\end{corollary}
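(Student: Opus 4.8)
The plan is to deduce Corollary~\ref{cor:egw-one-sided} directly from the machinery already developed, rather than reproving anything from scratch. The statement has two directions, and they are of very different difficulty.

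First I would handle the easy direction: if a graph $\G$ admits a one-way infinite Eulerian path $T\colon\lb\N\rb\to\G$, then $\G$ satisfies $\os$. Since $T$ traverses every edge exactly once and $\lb\N\rb$ is countably infinite, $E(\G)$ is countable and infinite. Connectedness of $\G$ follows because any two vertices are visited by $T$, hence joined by a subpath. The end condition holds because for any finite edge set $E$, the tail of $T$ after it has used up all edges of $E$ lies in a single connected component of $\G-E$, and that component is infinite (it contains infinitely many edges of $T$); so $\G-E$ has at most one infinite component, i.e. $\G$ has one end. Finally, the degree condition: applying Proposition~\ref{prop:extensibility-one-sided} (the ``moreover'' clause, or just the argument in its last paragraph) tells us $T(0)$ is distinguished, which in the language of $\os$ is exactly the statement that either $\G$ has a unique odd-degree vertex or $\G$ has an infinite-degree vertex and no odd-degree vertex.

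For the converse: if $\G$ satisfies $\os$, then by Definition~\ref{def:distinguished} the set of distinguished vertices is nonempty (either the unique odd vertex, or the infinite-degree vertices, at least one of which exists by the third clause of $\os$). Pick any distinguished vertex $v$. By Lemma~\ref{lem:existencia-right-extensible} there is a right-extensible path starting at $v$ (take any edge $e$; the lemma produces a right-extensible path from $v$ visiting $e$), and by the first part of Proposition~\ref{prop:extensibility-one-sided} every right-extensible path extends to a one-way infinite Eulerian path. Alternatively, and even more directly, the ``moreover'' clause of Proposition~\ref{prop:extensibility-one-sided} already asserts that a distinguished vertex is the initial vertex of a one-way infinite Eulerian path, so this direction is essentially immediate once we observe that $\os$ forces the existence of a distinguished vertex.

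I expect there to be no real obstacle here; the work has all been done in Section~\ref{sec:egw}, and the corollary is a packaging step. The one point requiring a sentence of care is the end condition in the easy direction — making sure that ``the tail of $T$ lives in one infinite component of $\G-E$'' is argued cleanly, including the observation that a component carrying infinitely many edges of $T$ is infinite as a graph. Everything else is a direct citation of Proposition~\ref{prop:extensibility-one-sided} and Definition~\ref{def:distinguished}.
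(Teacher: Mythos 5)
Your proposal is correct and follows essentially the same route as the paper: the forward direction via picking a distinguished vertex and invoking Proposition~\ref{prop:extensibility-one-sided}, and the converse via the tail-of-$T$ argument for the one-end condition and the in/out counting argument for the degrees. The only caveat is that the ``moreover'' clause of Proposition~\ref{prop:extensibility-one-sided} (and the very notion of \emph{distinguished} vertex) formally presupposes that $\G$ satisfies $\os$, so in the converse direction you must rely, as you yourself note, on the counting argument from that proof's last paragraph rather than on the proposition as a black box --- which is exactly what the paper does.
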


\begin{proof}
If a graph satisfies $\os$, then it has a distinguished vertex. It
follows from \Cref{prop:extensibility-one-sided} that it admits
a one-way infinite Eulerian path. 

Now let $G$ be a graph which admits a one-way infinite Eulerian path
$T$. The existence of this path shows that $E(G)$ is countable,
infinite, and that $G$ is connected. 

Let us prove that $G$ has one end. Let $E$ be a finite set of edges.
We must prove that $G-E$ has one infinite connected component. For
this purpose, consider the set of edges visited by $T$ after visiting
all edges from $E$. The infinite path $T$ shows that these edges
constitute a connected component of $G-T$. Moreover, it must be the
only one because this set of edges has finite complement in $E(G)$.
This proves our claim that $G$ has one end.

Finally, we prove the claim about the parity of the degrees of the
vertices of $G$. Indeed, a counting argument considering the number
of times that $T$ goes ``in'' and ``out'' each vertex shows that
the initial vertex of $T$ must have either odd or infinite degree,
and every other vertex must have either even or infinite degree. 
\end{proof}

\subsection{The case of two-way infinite paths}

In this subsection, we prove \Cref{thm:e2-characterization-of-trails}.
For this purpose, we introduce the following definition:
\begin{definition}
\label{def:bi-extensible}Let $\G$ be a graph satisfying $\e$. We
say that a path $t$ is \textbf{bi-extensible} in \textbf{$\G$} if
the following three conditions hold:
\begin{enumerate}
	\item $\G-t$ has no finite connected components.
	\item There is an edge $e$ incident to the final vertex of $t$ which was
	not visited by $t$. 
	\item There is an edge $f\ne e$ incident to the initial vertex of $t$
	which was not visited by $t$. 
\end{enumerate}
\end{definition}

\Cref{thm:e2-characterization-of-trails} asserts that a path
can be extended to a two-way infinite Eulerian path if and only if
it is bi-extensible. The structure of our proof is very similar to
the proof given for right-extensible paths. A key idea is that the
removal of a bi-extensible \emph{circuit} from a graph satisfying
$\e$ leaves a graph satisfying $\e$. 
\begin{lemma}
\label{lem:erase-bi-extensible-closed-trails}Let $\G$ be a graph
satisfying $\e$, and let $t$ be a bi-extensible circuit. Then $\G-t$
also satisfies $\e$.
\end{lemma}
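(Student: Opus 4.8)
The plan is to verify each of the four conditions in the definition of $\e$ for the graph $\G-t$, reusing the fact that $\G$ satisfies $\e$ and that $t$ is a bi-extensible circuit. First I would handle the easy bookkeeping: since $t$ is a finite path, it visits finitely many edges, so $E(\G-t)$ is still countable; it is infinite because $E(\G)$ is infinite and only finitely many edges are removed. For connectedness of $\G-t$: by condition (1) of bi-extensibility, $\G-t$ has no finite connected components, and by the end condition of $\e$ (applied with $E$ the set of edges visited by $t$, which induces an even subgraph precisely because $t$ is a \emph{circuit} — every vertex is entered and left equally often, so $\deg_t$ is even everywhere) the graph $\G - E(t)$ has exactly one infinite connected component; combining these, $\G-t$ is connected. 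This is the point where the hypothesis that $t$ is a circuit, rather than an arbitrary path, is essential.

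Next I would check the degree condition: every vertex of $\G-t$ has infinite or even degree. For a vertex $w$ surviving in $\G-t$, we have $\deg_{\G-t}(w) = \deg_\G(w) - \deg_t(w)$. Since $t$ is a circuit, $\deg_t(w)$ is even (and finite). If $\deg_\G(w) = \infty$ then $\deg_{\G-t}(w) = \infty$; if $\deg_\G(w)$ is finite it is even by the hypothesis on $\G$, and subtracting an even number leaves an even number. So the degree condition passes with no case analysis beyond this.

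The main work is the end condition for $\G-t$: I must show that $\G-t$ has one or two ends, and that removing from $\G-t$ any finite edge set $E'$ inducing an even subgraph leaves one infinite component. The number of ends of $\G-t$ is at most that of $\G$ (removing $t$ is removing a finite edge set, which cannot increase the supremum in the definition of ends past what deleting finitely many more edges achieves), hence at most two. For the refined statement, given a finite $E' \subseteq E(\G-t)$ inducing an even subgraph of $\G-t$, consider $E'' = E' \cup E(t)$: this is a finite edge set in $\G$, and the subgraph it induces in $\G$ has, at each vertex, degree $\deg_{E'}(w) + \deg_t(w)$ up to shared incidences — here I need to be slightly careful that $E'$ and $E(t)$ are disjoint (they are, since $E' \subseteq E(\G-t)$) so the induced degree is exactly the sum, which is even + even = even. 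Thus $E''$ induces an even subgraph of $\G$, and the end condition for $\G$ gives that $\G - E''$ has exactly one infinite component. Since $(\G-t) - E' = \G - E''$, we are done. I expect the subtlety to lie in this last step — making precise that the even-subgraph property is preserved under the union with $E(t)$, which again uses crucially that $t$ is a circuit so that $\deg_t$ is everywhere even — and in confirming that finite components cannot appear, which follows because $(\G-t)-E'$ equals $\G-E''$ and the latter's bounded-away structure is controlled by the one-infinite-component conclusion together with the observation that any finite component of $\G - E''$ would force a vertex of odd degree somewhere by the Handshaking lemma applied as in \Cref{lem:technical-lemma}, contradicting that all degrees in $\G$ (hence in $\G - E''$, after removing an even subgraph) are even or infinite. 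I would state this last point cleanly rather than re-deriving it, citing the argument pattern of \Cref{lem:technical-lemma}.
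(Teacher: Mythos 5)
Your proposal is correct and follows the same route as the paper: the one nontrivial point, connectedness of $\G-t$, is obtained exactly as in the paper's proof by combining bi-extensibility (no finite components) with the last condition of $\e$ applied to $E(t)$, which induces an even subgraph precisely because $t$ is a circuit. The remaining verifications (countability, the degree parity via $\deg_{\G-t}(w)=\deg_{\G}(w)-\deg_t(w)$, and the end condition via $E''=E'\cup E(t)$) are the details the paper dismisses as ``easily verified,'' and you carry them out correctly.
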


\begin{proof}
Note that $G-t$ is connected. Indeed as $t$ is bi-extensible, $G-t$
has no finite connected components, and by the third condition in
$\e$ the graph $G-t$ has at most one infinite connected component.
Joining these two facts, we conclude that $G-t$ is connected. The
remaining conditions in $\e$ are easily verified. 
\end{proof}
\begin{lemma}
\label{lem:existence-bi-extensible-trails} Let $\G$ be a graph satisfying
$\e$. Then for every vertex $v$ and edge $e$ there is a bi-extensible
path which visits $v$ and $e$.
\end{lemma}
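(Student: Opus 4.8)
The plan is to mimic the one-way case, combining the technical lemma (Lemma \ref{lem:technical-lemma}) with a careful analysis of the parities at the two endpoints. First I would use connectedness of $\G$ to pick a path $s$ from $v$ that visits $e$; more precisely, I want to choose $s$ to be a \emph{circuit} based at $v$ passing through $e$ when possible, since a circuit keeps all visited-vertex degrees even and makes the endpoint conditions easiest to control. Concretely, take a path $p$ from $v$ to one endpoint of $e$, traverse $e$, and then — using that every vertex of $\G$ has even or infinite degree and that $\G$ is connected and infinite — close it back up to $v$; this is exactly the kind of closing-up argument that Euler's theorem on a suitable finite subgraph provides. Then I would apply Lemma \ref{lem:technical-lemma} to this path to obtain a path $t$ with the same endpoints (namely $v$ and $v$), visiting everything $s$ visited, and whose removal leaves no finite connected component. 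That gives condition (1) of bi-extensibility for free.

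The remaining work is conditions (2) and (3): there must be an unvisited edge $e'$ at the final vertex of $t$ and an unvisited edge $f \ne e'$ at the initial vertex of $t$. Since I arranged $t$ to be a circuit based at $v$, the initial and final vertices coincide, so I need \emph{two} distinct unvisited edges incident to $v$ in $\G$. Here is where the degree bookkeeping enters: $\deg_t(v)$ is even (it is a circuit), and $\deg_\G(v)$ is even or infinite. If $\deg_\G(v) = \infty$ we are done immediately, since $t$ visits only finitely many edges. If $\deg_\G(v)$ is finite and even, I cannot yet conclude $\deg_\G(v) - \deg_t(v) \ge 2$ — it could be $0$, i.e. $t$ might have used up all edges at $v$. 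To handle this I would not close the path all the way up: instead I would be slightly more clever in the construction and leave a ``budget'' of unused edges at $v$. One clean way: first check whether $v$ has degree $0$ or $\infty$ in the relevant finite pieces; if $\deg_\G(v)$ is finite, note $\deg_\G(v) \ge 2$ (it is a nonzero even number in a connected infinite graph, or if it could be $0$ then $v$ is isolated, contradicting connectedness with an infinite edge set), and build $s$ so that at least two edges at $v$ remain unused — e.g. route the excursion to $e$ and back using a fixed pair of edges at $v$, but if only two edges are available at $v$ total, shrink the excursion so that it re-enters through a neighbor rather than forcing a second departure from $v$.

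The main obstacle, then, is precisely this endpoint-budget issue when $v$ has small finite degree: the naive ``circuit through $e$'' may exhaust all edges at $v$, and one must show there is always enough room to leave two unvisited edges at $v$ while still reaching $e$ and still satisfying the no-finite-component condition after applying Lemma \ref{lem:technical-lemma}. I expect the resolution to split into the cases $\deg_\G(v) = \infty$ (trivial), $\deg_\G(v) \ge 4$ finite (use a short circuit through $e$ that touches $v$ only at the start and end, consuming exactly two edges at $v$, leaving at least two), and $\deg_\G(v) = 2$ finite (then $v$ has exactly two incident edges $g_1, g_2$; if $e \in \{g_1, g_2\}$ one can take a length-one or length-two path, and one must argue directly that the bi-extensibility conditions can still be met, possibly by using that the two ends of $\G$ force a particular global structure). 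Once the endpoint conditions are secured, Lemma \ref{lem:technical-lemma} upgrades condition (1), and the proof is complete.
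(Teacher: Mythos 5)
There is a genuine gap, and it stems from a constraint you imposed that the statement does not ask for. The lemma only requires a bi-extensible path that \emph{visits} $v$ and $e$; it does not require the path to start (or end) at $v$. Your plan hinges on producing a \emph{circuit based at $v$} through $e$, but such a circuit need not exist: the two-way infinite line $\lb\Z\rb$ satisfies $\e$ and contains no circuits whatsoever, so the ``close it back up to $v$'' step fails outright. The closing-up argument you invoke works for finite even graphs, where an edge-by-edge extension can only get stuck at the start vertex; in an infinite graph the extension can simply run off to infinity and never return to $v$. Consequently the non-circuit case is unavoidable, and your proposal does not treat it. The paper's proof does: take any path $t$ visiting $v$ and $e$, upgrade it via \Cref{lem:technical-lemma} so that $\G-t$ has no finite component, and if $t$ is not a circuit, note that its two endpoints have odd degree in $t$ but even or infinite degree in $\G$, hence each has an unvisited incident edge; the two witnesses can be chosen distinct, since otherwise $\G-t$ would be forced to have a finite connected component.

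Even in the circuit case, the ``endpoint budget'' obstacle you correctly identify (needing two unvisited edges at the base point when $\deg_{\G}(v)$ is small and even) is an artifact of insisting the circuit be based at $v$, and your proposed resolutions (shrinking the excursion, appealing to the global structure forced by two ends) are not carried out. The paper dissolves the problem by re-parametrizing: since $t$ is finite and $\G$ is infinite and connected, $t$ visits some vertex $u$ that still lies in $\G-t$; because $t$ is a circuit, $\deg_t(u)$ is even, and $\deg_{\G}(u)$ is even or infinite, so $u$ has even nonzero degree in $\G-t$, i.e.\ at least two distinct unvisited incident edges. Re-basing the circuit at $u$ yields a bi-extensible path that still visits $v$ and $e$. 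If you drop the requirement that the path start at $v$ and add the non-circuit case, your argument can be repaired along exactly these lines.
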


\begin{proof}
By connectedness of $\G$ there is a path $t:\lb0,b\rb\to\G$ which
visits both $v$ and $e$. By \Cref{lem:technical-lemma} we
can assume that the removal of this path leaves no finite connected
component in $\G$. We now consider two cases: 

In the first case $t$ is not a circuit. We assert that then it is
bi-extensible. Indeed, as $t(0)$ and $t(b)$ have odd degree in $t$,
there are edges $e$ and $f$ not visited by $t$, and incident to
$t(0)$ and $t(b)$ respectively. We can take $e\ne f$ because otherwise
the graph $\G-t$ would be forced to have a finite connected component,
contradicting our choice of $t$. 

In the second case $t$ is a circuit. We just have to re-parametrize this path
in order to obtain a bi-extensible path. Indeed, as $\G$ is connected, $t$
visits a vertex $u$ which lies in $\G-t$. As the degree of $u$
is even both in $\G-t$ and $G$, there are at least two different edges in $\G-t$
which are incident to $u$. This holds even if some edge incident
to $u$ in $\G-t$ is a loop. Now we simply re-parametrize $t$ so that
it becomes a circuit with $u$ as initial and final vertex. It is
clear that this new path is bi-extensible. 
\end{proof}
\begin{lemma}
\label{lem:extension-bi-extensible-trails}Let $\G$ be a graph satisfying
$\e$. Then for every bi-extensible path $t$ and edge $e$ there is
a bi-extensible path which extends $t$ and visits $e$. We can choose
this extension of $t$ so that its domain strictly extends the domain
of $t$ in both directions.
\end{lemma}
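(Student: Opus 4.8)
The plan is to mimic the one-way argument (Lemma \ref{lem:existencia-extension-right-extensible}) but with the twist that the ``erasure lemma'' for two-way paths (Lemma \ref{lem:erase-bi-extensible-closed-trails}) only works for \emph{circuits}. So the first move is to cut the bi-extensible path $t\colon\lb a,b\rb\to\G$ into a circuit plus a leftover. Concretely, let $u=t(a)$ and $v=t(b)$. If $t$ is already a circuit ($u=v$) I can take $s=t$ directly and skip to the next paragraph; otherwise I want to produce a short path $p$ inside $\G-t$ joining $v$ back to $u$ so that $t$ followed by $p$ is a circuit. To get $p$ I would use connectedness of $\G-t$: since $t$ is bi-extensible there is an edge $e_0\neq f_0$ at $v$ and at $u$ respectively, both unvisited, hence both $u$ and $v$ are vertices of $\G-t$, and by Lemma \ref{lem:erase-bi-extensible-closed-trails}'s observation $\G-t$ is connected (it has no finite components and at most one infinite component). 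So there is a path $p$ in $\G-t$ from $v$ to $u$. Set $c = t * p$ (concatenation of $p$ at the right of $t$); this is a circuit through $u$.

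The delicate point is that I must not destroy too much when I pass from $t$ to the circuit $c$: after erasing $c$ I still need a graph satisfying $\e$ in which to run Lemma \ref{lem:existence-bi-extensible-trails} again. But $c$ itself need not be bi-extensible — erasing $p$ could strand a finite component or eat the last unvisited edges at $u$. The clean fix is to first apply Lemma \ref{lem:technical-lemma} to the finite path $c$: since every vertex of $c$ other than its endpoints (which coincide, both $=u$) has even or infinite degree in $\G$, Lemma \ref{lem:technical-lemma} upgrades $c$ to a circuit $c'$ through $u$ visiting the same vertices and edges, whose removal leaves no finite component. Now $\G - c'$ has no finite component; being a subgraph of $\G$ with a finite set of edges removed it still has one or two ends with the stated property, and the degree parity condition is inherited; so after checking (using the handshaking/degree bookkeeping exactly as in Lemma \ref{lem:erase-bi-extensible-closed-trails}) that $\G-c'$ is connected, we get that $\G-c'$ satisfies $\e$ — this is essentially Lemma \ref{lem:erase-bi-extensible-closed-trails} applied to the bi-extensible circuit obtained from $c'$, once we note $u$ has even or infinite degree in $\G-c'$ hence at least two unvisited edges there.

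With $\G-c'$ satisfying $\e$ in hand, I invoke Lemma \ref{lem:existence-bi-extensible-trails} on the graph $\G-c'$ with the vertex $u$ and the edge $e$ (if $e$ was already visited by $c'$, pick instead any unvisited edge — or better, simply note $e\in E(\G-c')$ only in the nontrivial case, and when $e$ is already covered we just ask the new piece to visit one unvisited edge and relabel): this yields a bi-extensible path $q$ in $\G-c'$ that visits $u$. Re-parametrize $q$ so that $u$ is an interior vertex — possible since $q$ passes through $u$ and, being bi-extensible with $\G-c'$ connected, $u$ has $\ge 2$ unvisited edges on either side — and split $q$ at $u$ as $q = q_\ell * q_r$ where $q_r$ starts at $u$ and $q_\ell$ ends at $u$. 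Finally form $t' = q_\ell * c' * q_r$: concatenate $q_\ell$ at the left and $q_r$ at the right of the circuit $c'$. Since $c'$ restricted to the domain of $t$ equals... — careful: $c'$ need \emph{not} extend $t$. To guarantee $t'$ extends $t$, I should instead only modify the \emph{added} loop $p$, not $t$ itself: apply Lemma \ref{lem:technical-lemma} not to $c=t*p$ but observe we only need the erasure of $c$ to leave no finite component, and arrange that by choosing $p$ via Lemma \ref{lem:technical-lemma} applied to the subgraph spanned by $p$'s edges together with the finite components they create, keeping $t$ untouched. Then $c' = t * p'$ genuinely extends $t$, and $t' = q_\ell * c' * q_r$ extends $t$, visits $e$, is bi-extensible in $\G$ (no finite components left: $\G - t' = (\G-c') - q$ has none since $q$ is bi-extensible in $\G-c'$; and the endpoint conditions come from bi-extensibility of $q$), and strictly extends the domain of $t$ on both sides because $q_\ell$ and $q_r$ are nonempty — if one were empty we'd replace $q$ by a strictly longer bi-extensible extension of itself using Lemma \ref{lem:existence-bi-extensible-trails} once more, or simply absorb one unvisited edge at $u$ into each side.

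The main obstacle, and the step I'd spend the most care on, is exactly this bookkeeping: ensuring the repair step (Lemma \ref{lem:technical-lemma}) is applied only to the \emph{newly adjoined} portion so that the result still literally extends $t$, while simultaneously guaranteeing the leftover graph satisfies $\e$ and that the final concatenation is strictly longer on both ends. Everything else is a routine re-run of the one-way proof with ``circuit'' inserted in the right places.
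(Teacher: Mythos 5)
There is a genuine gap at the very first step of your argument. You reduce to the circuit case by producing a path $p$ inside $\G-t$ joining the final vertex $v$ of $t$ back to its initial vertex $u$, and you justify the existence of $p$ by claiming that $\G-t$ is connected ``since it has no finite components and at most one infinite component.'' The second half of that claim is false for a non-closed $t$: the third condition in $\e$ only guarantees a single infinite component after removing a finite edge set that induces an \emph{even} subgraph, and the edge set of a non-circuit path is not such a set (its two endpoints have odd degree in the induced subgraph). Concretely, take $\G=\lb\Z\rb$, which satisfies $\e$, and let $t$ be the single edge from $0$ to $1$. This $t$ is bi-extensible, yet $\G-t$ consists of two infinite components and there is no path in $\G-t$ from $1$ back to $0$; your construction cannot even begin. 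This is exactly the situation the paper isolates as its third case: when $t$ is not a circuit and $\G-t$ is disconnected, one shows $\G-t$ splits into two infinite components each satisfying $\os$ with the respective endpoint of $t$ distinguished, and one extends $t$ on each side separately via \Cref{lem:existencia-right-extensible}. Without this case your proof does not cover two-ended graphs, which are precisely the new phenomenon in the two-way setting.

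Your remaining concerns are real but fixable, and the paper resolves them more cleanly than your tentative patch. To close a non-circuit $t$ up to a circuit \emph{without altering $t$}, the paper works entirely inside the graph $\G-t$: it picks a path $t_2$ in $\G-t$ from $u$ to $v$ and applies \Cref{lem:technical-lemma} to $t_2$ \emph{in the graph $\G-t$} (whose only odd-degree vertices are $u$ and $v$, so the hypotheses hold), rather than to the concatenation $t\ast p$ in $\G$. This keeps $t$ literally untouched and avoids the ``repair only the adjoined portion'' contortion you describe. Likewise, the possibility that the splitting point of the new piece lands at one of its ends (so the domain grows on only one side) is handled in the paper by noting that the split can be performed in either of two ways, one of which extends in each direction. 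With the missing disconnected case added and the repair step relocated to $\G-t$, your argument would essentially coincide with the paper's.
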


\begin{proof}
Let $t$ and $e$ be as in the statement. Clearly it suffices to prove
the existence of a bi-extensible path $s$ which extends $t$, visits
$e$, and whose domain strictly extends the domain of $t$ in a direction
of our choice. In order to prove this claim we consider three cases:

In the first case, $t\colon\lb a,b\rb\to\G$ is a circuit. Then $\G-t$
is connected by the third condition in $\e$, and the graph $\G-t$
satisfies $\e$ by \Cref{lem:erase-bi-extensible-closed-trails}.
We apply \Cref{lem:existence-bi-extensible-trails} to the graph
$\G-t$ to obtain a path $t_{1}\colon\lb a_{1},b_{1}\rb\to\G-t$ which
visits the vertex $t(a)=t(b)$, the edge $e$, and is bi-extensible
on $\G-t$. We just need to split $t_{1}$ in two paths and concatenate
them to $t$. For this let $c_{1}\in\N$, $a_{1}\leq c_1 \leq b_{1}$, such that $t_{1}(c_{1})=t(a)$,
and define $l_{1}$ and $r_{1}$ as the restrictions of $t_{1}$ to
$\lb a_{1},c_{1}\rb$ and $\lb c_{1},b_{1}\rb$, respectively. We
define the path $s$ by concatenating $l_{1}$ to the left of $t$,
and then $r_{1}$ to its right. By our choice of $t_{1}$ and $c_{1}$,
it follows that $s$ visits $e$, extends $t$, and is bi-extensible
on $\G$. An alternative way to define the path $s$ is by concatenating
$-r_{1}$ at the left of $t$, and $-l_{1}$ to its right. 

Observe that it is possible that $c_{1}$ equals $a_{1}$ or $b_{1}$,
and in this situation the domain of $s$ extends that of $t$ only
in one direction. But can choose this direction with the two possible
definitions of $s$. 

In the second case, $t\colon\lb a,b\rb\to\G$ is not a circuit and
$\G-t$ is connected. In this case we prove that we can extend $t$
to a bi-extensible circuit $s$, this proves the claim as then we
go back to the first case. As $\G-t$ is connected we can take a path
$t_{2}\colon\lb a_{2},b_{2}\rb\to\G-t$ whose initial vertex is $t(a)$
and whose final vertex is $t(b)$. By \Cref{lem:technical-lemma}
we can assume that $(\G-t)-t_{2}$ has no finite connected components.
We just need to split $t_{2}$ in two paths and concatenate them to
$t$ as follows. Let $c_{2}\in\N$, $a_{2}\leq c_2 \leq b_{2}$, such that $t_{2}(c_{2})$
lies in $(\G-t)-t_{2}$. That is, $t_{2}(c_{2})$ is a vertex with
incident edges not visited by $t_{2}$. Observe that there must be
at least two such edges by the parity of the vertex degrees. We define
$l_{2}$ and $r_{2}$ as the restrictions of $t_{2}$ to $\lb a_{2},c_{2}\rb$
and $\lb c_{2},b_{2}\rb$, respectively. Note that the final vertex
of $-l_{2}$ is $t(a)$, and the initial vertex of $-r_{2}$ is $t(b)$.
We define $s$ by concatenating $-l_{2}$ to the left of $t$, and
then $-r_{2}$ to its right. By our choice of $t_{2}$ and $c_{2}$,
$s$ is a bi-extensible circuit which extends $t$. This concludes
our proof of the second case. 

In the third case, $t$ is not a circuit and $\G-t$ is not connected.
It follows that $\G-t$ has two infinite connected components and
no finite connected component. These components satisfy $\os$, and
have the initial and final vertex of $t$ as a distinguished vertex.
We simply apply \Cref{lem:existencia-right-extensible} on each
one of these components and then concatenate to extend $t$ as in
our claim. %

\end{proof}
We are now ready to prove \Cref{thm:e2-characterization-of-trails}.

\begin{proposition}[\Cref{thm:e2-characterization-of-trails}]
\label{prop:proposition-bi-extensibility}Let $\G$ be a graph satisfying
$\e$. Then a path is bi-extensible if and only if it can be extended
to a two-way infinite Eulerian path.
\end{proposition}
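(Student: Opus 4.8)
The plan is to mirror the structure of the one-way case (Proposition prop:extensibility-one-sided), now using the lemmas developed for bi-extensible paths. First I would prove the easy direction: if $T\colon\lb\Z\rb\to\G$ is a two-way infinite Eulerian path, then any restriction $t$ of $T$ to a finite interval $\lb a,b\rb$ is bi-extensible. Conditions (2) and (3) are immediate — $T$ uses an unvisited edge just past $t(b)$ and just before $t(a)$, and these two edges are distinct since $T$ does not repeat edges. For condition (1), any finite set $F$ of edges of $\G-t$ would have to be used by $T$ at finitely many times, so there are only finitely many ``blocks'' of $T$ outside $t$, each of which is an infinite path in $\G-t$; hence $\G-t$ has no finite connected component (a finite component would force $T$ to enter and leave it, contradicting that $T$ visits its edges and never repeats an edge, or more carefully: a finite component would be traversed by a finite sub-path of $T$, but then $T$ could not re-enter it, so its edges would be disjoint from the rest, impossible since that component would be a separate piece never connected to the rest by $T$). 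I would phrase this the same way as in the proof of Corollary cor:egw-two-sided.

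For the forward (hard) direction, let $t$ be bi-extensible. By Lemma lem:extension-bi-extensible-trails, every bi-extensible path can be extended — with domain strictly growing in \emph{both} directions — to a bi-extensible path visiting any prescribed edge. So, fixing an enumeration $(e_n)_{n\in\N}$ of $E(\G)$, I would iterate this lemma to produce a sequence of bi-extensible paths $(t_n)_{n\in\N}$ with $t_0=t$, each $t_{n+1}$ extending $t_n$ with strictly larger domain on both sides, and $t_n$ visiting $e_n$. Because the domains $\lb a_n,b_n\rb$ satisfy $a_n\to-\infty$ and $b_n\to+\infty$, the union $\bigcup_n t_n$ is a well-defined graph homomorphism $T\colon\lb\Z\rb\to\G$; it repeats no edge since each $t_n$ does not and the $t_n$ are nested; and it is Eulerian since every $e_n$ is eventually visited. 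This $T$ is the desired two-way infinite Eulerian path extending $t$.

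The main obstacle — and the only place requiring genuine care — is ensuring that the domain of the iterate genuinely exhausts $\Z$, i.e. that $a_n\to-\infty$ and $b_n\to+\infty$ rather than stabilizing. This is exactly why the last sentence of Lemma lem:extension-bi-extensible-trails (``we can choose this extension of $t$ so that its domain strictly extends the domain of $t$ in both directions'') was isolated: at each step I invoke that stronger form, so each application pushes both endpoints out by at least one, giving $b_n\ge b_0+n$ and $a_n\le a_0-n$. A secondary point to check is that each $t_n$ is genuinely bi-extensible so that the lemma applies again at the next step — but this is built into the conclusion of Lemma lem:extension-bi-extensible-trails, so the induction goes through cleanly. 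I would close by remarking that Corollary cor:egw-two-sided (the Erdős–Grünwald–Weiszfeld theorem for two-way paths) follows in the same manner as Corollary cor:egw-one-sided: a graph satisfying $\e$ has a bi-extensible path by Lemma lem:existence-bi-extensible-trails, hence a two-way infinite Eulerian path; conversely the existence of such a path forces $\e$ via the easy direction above together with a counting argument on the vertex degrees.
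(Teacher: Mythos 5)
Your proposal is correct and follows essentially the same route as the paper: the backward direction is the observation that restrictions of a two-way infinite Eulerian path to finite intervals are bi-extensible, and the forward direction iterates \Cref{lem:extension-bi-extensible-trails} along an enumeration of the edges to build a nested sequence of bi-extensible paths whose union is the desired Eulerian path. The only difference is one of emphasis: the paper leaves implicit the point you rightly single out, namely that the ``strictly extends in both directions'' clause of \Cref{lem:extension-bi-extensible-trails} is exactly what guarantees the domains exhaust $\lb\Z\rb$.
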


\begin{proof}
Let $t$ be a bi-extensible path, and let $(e_{n})_{n\in\N}$ be a
numbering of the edges in $E(\G)$. In order to extend $t$ to a two-way
infinite Eulerian path on $G$, we iterate \Cref{lem:extension-bi-extensible-trails}
to obtain a sequence of paths $(t_{n})_{n\in\N}$ such that $t_{0}$
extends $t$, and such that for each $n$:
\begin{enumerate}
	\item The path $t_{n}$ is bi-extensible.
	\item The path $t_{n}$ visits $e_{n}$.
	\item The path $t_{n+1}$ extends $t_{n}$. 
\end{enumerate}
This sequence defines a two-way infinite Eulerian path on $\G$ which
extends $t$.

For the other direction, it is clear that the restriction of a two-way
infinite Eulerian path $T:\lb\Z\rb\to G$ to a set of the form $\lb n,m\rb$
yields a bi-extensible path. 
\end{proof}
We now prove an alternative characterization of paths that can be
extended to two-way infinite Eulerian paths. This will be relevant
in \Cref{sec:computability} to prove \Cref{thm:right=000020bi=000020extensible=000020is=000020decidable}.
\begin{proposition}
\label{prop:bi-extensible-alternative-characterization}Let $\G$
be a graph satisfying $\e$. Then a path $t$ can be extended to a
two-way infinite Eulerian path on $\G$ if and only if it satisfies
the following three conditions:
\begin{enumerate}
	\item Every connected component of $\G-t$ contains either the initial or
	final vertex of $t$.
	\item There is an edge $e$ incident to the final vertex of $t$ which was
	not visited by $t$. 
	\item There is an edge $f\ne e$ incident to the initial vertex of $t$
	which was not visited by $t$. 
\end{enumerate}
\end{proposition}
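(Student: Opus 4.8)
The plan is to show the new condition is equivalent to bi-extensibility (Definition~\ref{def:bi-extensible}), since \Cref{prop:proposition-bi-extensibility} already identifies bi-extensibility with the property of being extendable to a two-way infinite Eulerian path. Conditions 2 and 3 here are literally conditions 2 and 3 of bi-extensibility, so the whole task reduces to the following claim: assuming 2 and 3 hold, the condition ``$\G-t$ has no finite connected component'' is equivalent to the condition ``every connected component of $\G-t$ contains the initial or final vertex of $t$.'' Note that since $t$ is a finite path, there are only finitely many connected components of $\G-t$ that are ``touched'' by $t$ (at most one per visited vertex), so this is really a statement comparing the finite components with the components meeting $\{t(a),t(b)\}$.

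For the forward direction of the proposition (extendability $\Rightarrow$ the three new conditions): by \Cref{prop:proposition-bi-extensibility}, $t$ is bi-extensible, so conditions 2 and 3 hold and $\G-t$ has no finite component. It remains to see every component $C$ of $\G-t$ contains $t(a)$ or $t(b)$. If $T\colon\lb\Z\rb\to\G$ is a two-way infinite Eulerian path extending $t$ with $t$ the restriction to $\lb a,b\rb$, then the edges of $C$ are all visited by $T$ either among the indices $<a$ or among the indices $>b$; since $C$ is connected and $T$ restricted to $(-\infty,a]$ and to $[b,\infty)$ are paths whose images are connected and meet $\{t(a),t(b)\}$, the component $C$ must contain $t(a)$ or $t(b)$. (This is the same ``tail of an Eulerian path is a connected piece anchored at its endpoint'' argument used in the proof of \Cref{cor:egw-one-sided}.)

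For the converse (the three conditions $\Rightarrow$ bi-extensible): assume 1, 2, 3. We must rule out finite connected components of $\G-t$. Suppose for contradiction $C$ is a finite connected component of $\G-t$. By condition 1, $C$ contains $t(a)$ or $t(b)$; say it contains $t(a)$ (the other case is symmetric, and the case where $C$ contains both is handled the same way). Then $C$ is a finite connected graph, and I will derive a contradiction with the degree hypotheses of $\e$. The key is a parity/handshaking computation: for a vertex $w$ of $C$ other than $t(a)$ (and $t(b)$ if that is also in $C$), we have $\deg_{\G}(w)=\deg_C(w)+\deg_t(w)$, and since $\deg_\G(w)$ is finite (as $w\in C$ and $C$ is finite) it is even by $\e$, while $\deg_t(w)$ is even because $t$ is a path not starting or ending at $w$; hence $\deg_C(w)$ is even. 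Now at $t(a)$: condition 3 gives an edge $f$ incident to $t(a)$ not visited by $t$, and this edge lies in $\G-t$, hence in $C$; so $t(a)$ has positive degree in $C$. A handshaking-lemma argument on the finite graph $C$, together with the parity of $\deg_C$ at all other vertices, forces $\deg_C(t(a))$ to be even (if $t(b)\notin C$) — more precisely, I compare the parity of $\deg_C(t(a))$ with $\deg_t(t(a))$ and use that $\deg_\G(t(a))$ is finite (being in the finite component $C$) hence even, to get that $\deg_C(t(a)) = \deg_\G(t(a)) - \deg_t(t(a))$, and combine with the fact that $t$ has an odd contribution at $t(a)$ unless $t$ is a circuit. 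Carefully splitting into the cases ``$t$ is a circuit'' versus ``$t$ is not a circuit, and $t(b)\in C$ or not'' and using condition 2 symmetrically when $t(b)\in C$, each case produces a vertex of $C$ whose degree in $\G$ is finite and odd, contradicting $\e$, OR contradicts the ``$\G-E$ has one infinite connected component'' clause of $\e$ (taking $E=E(C)$: the subgraph $C$ would have all even degrees, yet removing it disconnects off a finite piece, leaving possibly no infinite component issue — actually the cleanest contradiction: $E(C)$ induces an even subgraph, so by the last clause of $\e$, $\G - E(C)$ has an infinite connected component, which is fine, but we instead use that $C$ finite with all-even degrees is itself Eulerian, and splice an Eulerian circuit of $C$ into $t$ to contradict minimality — better to avoid this and stick with the pure parity contradiction).

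The main obstacle I expect is the bookkeeping in the converse: correctly handling the several cases (whether $t$ is a circuit; whether one or both endpoints of $t$ lie in the finite component $C$) and in each case extracting from the handshaking lemma on the finite graph $C$ a vertex whose $\G$-degree is finite and odd — contradicting the third bullet of $\e$. The argument is morally identical to the proof of \Cref{lem:technical-lemma}, where exactly this kind of parity analysis on the finite graph $G'$ is carried out; indeed one could instead argue directly: if $\G-t$ had a finite component $C$, apply the reasoning of \Cref{lem:technical-lemma} to see that $C$ together with the edges of $t$ forms a finite graph violating the handshaking lemma or the degree conditions. I would phrase the final write-up by reducing to bi-extensibility and then giving this case analysis compactly, citing \Cref{lem:technical-lemma}'s proof technique rather than repeating it in full.
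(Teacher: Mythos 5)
Your overall strategy (reduce to bi-extensibility via \Cref{prop:proposition-bi-extensibility}, prove condition (1) for extendable paths by following the two tails of the infinite Eulerian path back to $t(a)$ or $t(b)$) is the same as the paper's, and your forward direction is fine. The gap is in the converse. Your plan is to rule out a finite component $C$ of $\G-t$ purely by a handshaking contradiction: all vertices of $C$ have finite, hence even, degree in $\G$, and $\deg_t$ is even away from the endpoints of $t$, so $C$ should end up with exactly one odd-degree vertex. But this only works in one of your cases, namely when $t$ is not a circuit and $C$ contains exactly one of $t(a),t(b)$. If $t$ is a circuit, then $\deg_t(t(a))$ is even and every vertex of $C$ has even degree in $C$ --- no handshaking violation. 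If $t$ is not a circuit but $C$ contains both $t(a)$ and $t(b)$, then $C$ has exactly two odd-degree vertices --- again no violation. Your claim that ``each case produces a vertex of $C$ whose degree in $\G$ is finite and odd'' is therefore false in two of the three cases, and the alternatives you gesture at (invoking the last clause of $\e$ with $E=E(C)$, or ``splicing an Eulerian circuit of $C$ into $t$ to contradict minimality'') are not carried out; the minimality remark in particular has no referent, since nothing minimal has been chosen.

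The missing idea is that in precisely those two problematic cases, condition (1) forces $\G-t$ to be \emph{connected}: every component must contain $t(a)$ or $t(b)$, and in these cases $t(a)$ and $t(b)$ lie in the same component $C$ (they are in $\G-t$ by conditions (2) and (3)), so $C$ is the unique component, i.e.\ $C=\G-t$. Since $E(\G)$ is infinite and $t$ visits only finitely many edges, $\G-t$ is infinite, contradicting the finiteness of $C$. This is exactly how the paper disposes of the circuit case and the case $\G_+=\G_-$, reserving the handshaking argument for the single remaining case where the two endpoints lie in distinct components. With that repair your proof coincides with the paper's; without it, the converse direction does not go through.
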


\begin{proof}
By \Cref{prop:proposition-bi-extensibility}, it suffices to
prove that a path satisfies these conditions if and only if it is
bi-extensible. 

Let $t$ be a path satisfying these conditions, we claim that $\G-t$
has no finite connected components and thus $t$ is bi-extensible.
This is clear if $t$ is a circuit. If $t$ is not a circuit, we let
$\G_{-}$ (resp. $\G_{+}$) be the connected component of $\G-t$
containing the initial (resp. final) vertex of $t$. If $G_{+}$ is
equal to $G_{-}$, it follows that $G-t$ has no finite connected
component. If $G_{+}$ is different to $G_{-}$, we claim that both
must be infinite, and thus $\G-t$ has no finite connected component.
Indeed, as $G$ satisfies $\e$, the initial and final vertex
of $t$ are the only vertices in $\G-t$ with odd degree. Now if $\G_{+}$
(or $G_{-}$) is finite, then it has exactly one vertex with odd degree,
a contradiction by Handshaking lemma. Our claim that $t$ is bi-extensible
follows. 

Now let $t$ be a bi-extensible path, we claim that it satisfies the
three conditions in this statement. We just need to verify that every
connected component of $\G-t$ contains either the initial or final
vertex of $t$. Indeed, the fact that $t$ can be extended to a two-way infinite 
Eulerian path (\Cref{prop:proposition-bi-extensibility}) implies
that every vertex in $G-t$ can be joined by a path to the initial
vertex of $t$, or to the final vertex of $t$, so it follows that
every connected component of $\G-t$ contains either the initial or
final vertex of $t$. 
\end{proof}
As mentioned in the introduction, we obtain the Erdős, Grünwald, and
Weiszfeld theorem as a corollary of our results about paths. 
\begin{corollary}[Erdős, Grünwald, and Weiszfeld theorem for two-way infinite Eulerian
paths]
\label{cor:egw-two-sided}A graph admits a two-way infinite Eulerian
path if and only if it satisfies $\e$. 
\end{corollary}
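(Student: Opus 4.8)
The plan is to mirror exactly the structure of the proof of \Cref{cor:egw-one-sided}, now using the two-way machinery already developed. For the "if" direction, suppose $\G$ satisfies $\e$. I would first invoke \Cref{lem:existence-bi-extensible-trails} (with any choice of vertex $v$ and edge $e$) to produce a bi-extensible path on $\G$; then \Cref{prop:proposition-bi-extensibility} extends this path to a two-way infinite Eulerian path. So this direction is essentially immediate from the results about paths.

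For the "only if" direction, let $T\colon\lb\Z\rb\to\G$ be a two-way infinite Eulerian path. The fact that $T$ visits every edge exactly once immediately gives that $E(\G)$ is countable and infinite, and that $\G$ is connected (any two vertices lie in the image of $T$, which is traversed by a path). For the degree condition, I would run the same "in/out" counting argument as in \Cref{cor:egw-one-sided}: a vertex $v$ distinct from... well, here there is no initial or final vertex since the path is two-way infinite, so every vertex is entered and exited the same number of times along $T$; hence every vertex has even or infinite degree, which is the third condition of $\e$.

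The main obstacle is verifying the end condition in $\e$: given a finite set $E$ of edges inducing an even subgraph, I must show $\G-E$ has exactly one infinite connected component. Here I would split $T$ at two times $n<m$ chosen so that $T(\lb n,m\rb)$ contains all edges of $E$; this writes $\G$ as the edge-disjoint union of a finite central piece and two one-way infinite "tails" $T_{-} = T|_{\lb \ldots, n\rb}$ and $T_{+} = T|_{\lb m, \ldots\rb}$ (running to $-\infty$ and $+\infty$ respectively, each a one-way infinite Eulerian path on the subgraph it induces). Each tail induces a connected subgraph, so $\G$ minus the central finite piece has at most two infinite components, hence $\G$ has at most two ends; since removing $E$ removes a subset of the edges of the central finite piece, $\G-E$ has at most two infinite components. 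To get "exactly one infinite component of $\G-E$" when $E$ induces an even subgraph, I would argue: if $\G-E$ had two infinite components $C_1, C_2$, then the subgraph $\G[E]$ together with the finitely many edges of the central piece not in $E$ connects $C_1$ to $C_2$; counting degrees modulo $2$, since $E$ induces an even subgraph and every vertex of $\G$ has even or infinite degree, each $C_i$ can have only finitely many (in fact zero) vertices of odd degree coming from the cut, forcing a parity contradiction analogous to the Handshaking argument used in \Cref{prop:bi-extensible-alternative-characterization} — essentially, a finite even edge set cannot separate $\G$ into two infinite even halves. The cleanest route may instead be to reduce directly to \Cref{prop:bi-extensible-alternative-characterization}: take a finite initial segment $t$ of $T$ whose edge set contains $E$ and which is bi-extensible (such segments exist by the proof of \Cref{prop:proposition-bi-extensibility}); since $\G-t$ has no finite connected component and at most... — here I would need to be careful, since $t$ being a long path does not directly control $\G-E$. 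I expect the parity/Handshaking argument on the cut to be the step requiring the most care, and I would present it in the same spirit as the corresponding arguments in \Cref{lem:technical-lemma} and \Cref{prop:bi-extensible-alternative-characterization}.
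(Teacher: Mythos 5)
Your overall architecture coincides with the paper's: the ``if'' direction via \Cref{lem:existence-bi-extensible-trails} and \Cref{prop:proposition-bi-extensibility}, and, for the ``only if'' direction, the decomposition of $T$ into a finite central piece containing $E$ and two one-way infinite tails, which gives countability, connectedness, the degree condition by the in/out count, and the bound of at most two infinite components after removing a finite edge set. All of that is correct and is exactly what the paper does.

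The gap is in the final step, which you yourself flag as the delicate one. You propose to get the contradiction by ``counting degrees modulo $2$'' on the two infinite components $C_{1},C_{2}$ of $\G-E$, summarizing this as ``a finite even edge set cannot separate $\G$ into two infinite even halves.'' As stated, this argument does not close: the Handshaking lemma applies only to finite graphs, and an infinite connected graph in which every vertex has even or infinite degree is perfectly consistent (e.g.\ $\lb\Z\rb$ itself), so no parity contradiction can be read off from $C_{1}$ and $C_{2}$ directly. The missing idea, which is how the paper resolves it, is to localize the parity argument to a \emph{finite} subgraph: let $u$ and $v$ be the first and last vertices of $\G[E]$ visited by $T$, and let $F$ be the set of edges visited by $T$ after $u$ and before $v$, so that $E\subset F$ and $\G[F]$ is finite. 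The in/out count applied to the restriction of $T$ to this interval shows that $u$ and $v$ are the only vertices of odd degree in $\G[F]$; since $\G[E]$ is even, removing $E$ preserves all parities, so $u$ and $v$ are still the only odd-degree vertices of $\G[F]-E$. If they lay in different components of $\G[F]-E$, the component containing $u$ would be a finite connected graph with exactly one odd-degree vertex, contradicting Handshaking. Hence $u$ and $v$ are joined in $\G-E$, and since the two infinite tails of $T$ attach at $u$ and at $v$ respectively, $\G-E$ has exactly one infinite connected component. Your suspicion that the detour through \Cref{prop:bi-extensible-alternative-characterization} does not control $\G-E$ is correct; the passage to the finite graph $\G[F]$ is the step your sketch is missing.
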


\begin{proof}
If a graph satisfies $\e$, then it admits a two-way infinite Eulerian
path by \Cref{lem:existence-bi-extensible-trails} and \Cref{prop:proposition-bi-extensibility}. 

Now let $G$ be a graph which admits a two-way infinite Eulerian path
$T$, we verify that $G$ satisfies the conditions $\e$. The path
$T$ shows that the edge set of $G$ is infinite, countable, and that
$G$ is connected. Moreover, a counting argument on the number of
times that $T$ goes ``in'' and ``out'' a vertex shows that no
vertex can have finite odd degree. That is, every vertex has either
infinite or even degree. 

To see that $G$ has either one or two ends, note that the path $T$
shows that the removal of a finite set of edges from $G$ must leave
at most two infinite connected components. Finally, let $E$ be a
finite set of edges such that $G[E]$ is an even subgraph of $G$. We must
show that $G-E$ has one infinite connected component. For this purpose,
let $u$ and $v$ be the first and last vertex in $\G[E]$ visited
by $T$. If $u=v$ then our claim follows. If $u\ne v$ then we let
$F$ be the set of edges visited by $T$ after $u$ but before $v$,
so $E\subset F$. Observe that $\G[F]-E$ is a graph where $u$ and
$v$ have odd degree, and the remaining vertices have even degree.
If $G[F]-E$ is connected then our claim that $G-E$ has one infinite connected component follows. But this is the
only possibility: if $\G[F]-E$ is not connected, then the connected
component of $\G[F]-E$ which contains $u$ is a finite and connected
graph with exactly one vertex with odd degree, contradicting the Handshaking
lemma. 
\end{proof}

\section{Computability results  }\label{sec:computability}

In this section we prove our computability results associated to infinite
Eulerian paths on moderately computable graphs. That is, \Cref{thm:effective=000020egw}
and \Cref{thm:right=000020bi=000020extensible=000020is=000020decidable}.
We assume familiarity with basic concepts in computability or recursion
theory, such as decidable set of natural numbers and computable function
on natural numbers. By algorithm we refer to the formal object of
Turing machine. In some cases we consider algorithms dealing with
finite objects such as finite graphs
or finite sets of vertices. These objects will be assumed to be described
by natural numbers in a canonical way. A more formal treatment of
these computations can be done through the theory of numberings, but
we avoid this level of detail here. 

We start with a review of standard computability notions for infinite
graphs, and introducing the notion of moderately computable graph. 

\subsection{Computable, moderately computable, and highly computable graphs}\label{subsec:different=000020computability=000020notions=000020for=000020graphs}

Let us recall that the adjacency relation of a graph $G$ is the set
\begin{align*}
\{(u,v) & \in V(G)^{2}\mid\text{\ensuremath{u} and \ensuremath{v} are adjacent}\}.
\end{align*}

A common definition of computable graph is a graph whose vertex set
is a decidable subset of $\N$, and whose adjacency relation is a
decidable subset of $\N^{2}$. This definition is only suitable for
simple graphs, that is, graphs with no loops or multiple edges between
vertices. In order to avoid this limitation, we also consider the
incidence relation between edges and pairs of vertices:
\begin{align*}
\{(e,u,v) & \in E(G)\times V(G)^{2}\mid\text{\ensuremath{e} joins \ensuremath{u} to \ensuremath{v}}\}.
\end{align*}

\begin{definition}
\label{def:computable-graph}A graph is \textbf{computable }when its
edge and vertex sets are indexed by decidable sets of natural numbers,
in such a manner that the relations of adjacency and incidence between
edges and pairs of vertices are decidable. 
\end{definition}

In a computable and locally finite graph, the vertex degree function
may not be computable (see \cite{calvert_Distance_2011}). This is
the reason behind the following definition, which is standard in the
literature of recursive combinatorics.
\begin{definition}
A computable graph is\textbf{ highly computable }if it is locally
finite, and the vertex degree function $\deg_{\G}\colon V(\G)\to\N$
is computable. 
\end{definition}

Here we introduce the following class of graphs, which can be considered
as a natural generalization of highly computable graphs to graphs
with vertices of infinite degree. 

\begin{definition}
A computable graph is\textbf{ moderately computable }if the vertex
degree function $\deg_{G}\colon V(\G)\to\N\cup\{\infty\}$ is a computable
function.
\end{definition}

\begin{remark}
Let us compare these notions by looking at the problem of computing
the set of edges incident to a vertex $v$. When $G$ is a computable
graph, this set of edges is recursively enumerable because the relation
of incidence between edges and pairs of vertices is decidable, and
the algorithm is uniform on $v$. When $G$ is highly computable we
can compute this finite set, uniformly on $v$: we just have to enumerate
edges, and stop when the vertex degree function informs us that we
already enumerated all edges. When $G$ is moderately computable we
have a compromise between these two notions: we have the knowledge
of whether this set is finite or infinite, and if the set is finite
then we can compute it. If the graph is simple, these comments also
apply to the problem of computing the set of neighbors of a vertex,
this is discussed in more detail in \Cref{sec:remarks}. 
\end{remark}

We will also make use of the following notion. A \textbf{description
}of a computable graph $G$ is a tuple of algorithms deciding membership
in the index sets of vertices, edges, and the relations of adjacency
and incidence between pairs of vertices and edges. A description of
a highly computable graph and moderately computable graph specifies,
additionally, an algorithm for the computable function $v\mapsto\deg_{G}(v)$.

\subsection{Elementary operations on moderately computable graphs}

In this subsection we prove the computability of some basic operations
on moderately computable graphs. For this purpose, we introduce the
following notion:

\begin{definition}
Let $G$ be a moderately computable graph. Let $v$ be a vertex of
the graph $G$, a distance $r\in\N$, and a ``precision'' $s\in\N$.
We define $G(v,r,s)$ as the finite and connected subgraph of $G$
induced by the edges of all paths in $\G$ which visit $v$, have
length at most $r$, and which only visit edges whose index is at
most $s$.
\end{definition}

The graph $G(v,r,s)$ is essentially a ``ball'' around $v$. This
graph is finite, connected, and it can be computed from $v$, $r$
, $s$ and $G$:
\begin{lemma}
There is an algorithm which,
on input a description of a moderately computable graph $G$, and parameters $v$, $r$ and $s$, computes $G(v,r,s)$. 
\end{lemma}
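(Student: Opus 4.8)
The plan is to describe a direct breadth-first search algorithm and then argue that each step it performs is effective given a description of the moderately computable graph $G$. First I would enumerate the finitely many edges of $G$ whose index is at most $s$; call this set $S$. For each such edge $e$ and each pair of vertices $(u,w)$ with indices up to some bound, the incidence relation is decidable, but we must be careful: a priori we do not know which vertices an edge joins. However, since an edge joins exactly one unordered pair of vertices, we can search: for a fixed $e \in S$, enumerate pairs $(u,w)$ (by index) and test the incidence relation; this search halts, and recovers the endpoints of $e$. Doing this for all $e \in S$ produces a finite graph $H = G[S]$ together with an explicit adjacency/incidence table, and all of this is computable from the description of $G$ and the parameter $s$ alone (we do not even need the degree function here).

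Next I would compute, inside the \emph{finite} explicitly-given graph $H$, the set of all paths that start at $v$ and have length at most $r$; equivalently, perform a bounded breadth-first exploration from $v$ in $H$ to depth $r$, collecting every edge that lies on such a path. This is a purely finite computation on an explicitly presented finite graph, hence clearly effective. The resulting set of edges is exactly $E(G(v,r,s))$ by definition: a path in $G$ visiting $v$, of length $\le r$, using only edges of index $\le s$, is the same thing as a path in $H$ from $v$ of length $\le r$. Finally, take the induced subgraph on those edges (adding the incident vertices), which is again a finite explicit computation; this outputs $G(v,r,s)$.

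The step I expect to be the main (really, the only) subtlety is the first one: recovering, from the bare indices, the concrete combinatorial structure (which edges join which vertices) of the finite piece we care about. The point is that Definition~\ref{def:computable-graph} gives us only \emph{decidable} incidence and adjacency relations on index sets, not a ready-made list of endpoints; the observation that makes this effective is that each edge has exactly two endpoints (with multiplicity), so a finite unbounded search over vertex indices, guarded by the decidable incidence test, terminates and finds them. Once this is done, everything reduces to finite graph algorithms. Note in particular that the degree function plays no role in this particular lemma — moderate computability is not needed here, only computability — though it will be essential in the subsequent lemmas that approximate $G - t$ and detect infinite components.
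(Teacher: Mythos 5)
Your proposal is correct and follows essentially the same route as the paper: restrict attention to the finitely many edges of index at most $s$, recover their endpoints via the decidable incidence relation, and then perform a finite search for the qualifying paths through $v$ (your observation that an edge lies on a length-$\le r$ path \emph{visiting} $v$ iff it lies on one \emph{starting} at $v$ is valid, since restrictions of paths are paths). Your explicit remark that the endpoints of each edge are found by a terminating search guarded by the decidable incidence relation, and that the degree function is not needed here, only makes explicit what the paper leaves implicit.
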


\begin{proof}
We start by computing the finite set of all possible paths that can
be formed with the finite set of edges 
\[
\{e\in E(G)\mid\text{the index of \ensuremath{e} is at most \ensuremath{s}}\}.
\]
After this, we check which of these paths visit the vertex $v$ and
have length at most $r$, and define $E$ as the set of all edges
visited by these paths. Observe that the graph $G(v,r,s)$ is exactly
the subgraph of $G$ induced by $E$. As we already have computed
$E$, it only remains to compute the finite set $V$ of vertices incident
to the edges in $E$. We have obtained the edge and vertex set of
$G(v,r,s)$, and the relations of adjacency and incidence of $G(v,r,s)$
can be recovered from the description of $G$. 
\end{proof}
{} Now we prove some results related to the removal of finite sets of
edges from a graph. 
\begin{lemma}
\label{lem:calculabilidad-de-vecinos}There is an algorithm which,
on input the description of a moderately computable graph $\G$, and
a finite set of edges $E$ in $\G$, outputs a list of all vertices
in $G-E$ which are incident to some edge in $E$. 
\end{lemma}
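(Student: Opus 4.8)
The plan is to reduce the task to a finite search localized around the vertices touched by $E$. First I would compute the finite set $W$ of all vertices of $G$ that are incident to some edge of $E$: since $E$ is finite and the incidence relation between edges and pairs of vertices is decidable, for each $e\in E$ one searches through pairs of vertex indices until the (unique, up to order) pair joined by $e$ is found, and one takes $W$ to be the union of these pairs. Every vertex of $G-E$ that is incident to some edge of $E$ lies in $W$, so it remains to decide, for each $v\in W$, whether $v$ survives in $G-E$; by the definition of $G-E$ this amounts to deciding whether $v$ has an incident edge that does not belong to $E$.

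Here is where moderate computability enters. Given $v\in W$, first compute $\deg_G(v)\in\N\cup\{\infty\}$ using the computable vertex degree function. If $\deg_G(v)=\infty$, then since $E$ is finite the vertex $v$ necessarily has an incident edge outside $E$, so $v\in V(G-E)$ and we place $v$ in the output. If $\deg_G(v)=d$ is finite, enumerate the edges of $G$ (the edge set is indexed by a decidable set of naturals), testing each for incidence to $v$ via the incidence relation, and maintain a counter to which a loop at $v$ contributes $2$ and any other edge incident to $v$ contributes $1$; halt the enumeration as soon as the counter reaches $d$. At that moment the finite list $I(v)$ of all edges of $G$ incident to $v$ has been produced. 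We then check whether $I(v)\setminus E$ is empty: if it is nonempty, $v$ has an incident edge that was not removed, so $v\in V(G-E)$ and we output $v$; if it is empty, $v$ has no incident edge in $G-E$ and is discarded. Running over all $v\in W$ and collecting the retained vertices yields the desired list.

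The correctness of the halting condition in the finite-degree case relies on the fact that once the contributions of the incident edges (loops counted twice) sum to $\deg_G(v)$, no further incident edge can exist; this is the only slightly delicate point, and it is exactly the step that would fail for a merely computable graph, where the set of edges incident to $v$ is only recursively enumerable. Since every operation above is effective, the set $W$ is finite, and the description of $G$ supplies everything used, the whole procedure terminates and is an algorithm of the required form.
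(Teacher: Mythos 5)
Your proposal is correct and follows essentially the same route as the paper: restrict attention to the finite set of vertices incident to an edge of $E$, then use the computable degree function to split into the infinite-degree case (automatic survival in $G-E$) and the finite-degree case (compute the full incidence list and compare with $E$). Your explicit treatment of the loop-counts-twice bookkeeping in the halting condition is a detail the paper leaves implicit, but it is the same argument.
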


\begin{proof}
On input $G$ and $E$, we first compute the finite set $V$ of all
vertices in $V(G)$ incident to some edge in $E$. Then, for each
$v\in V$, we can decide whether $v$ lies in $G-E$ by using the
computability of the degree function:
\begin{enumerate}
	\item If $\deg_{G}(v)=\infty$, then $v$ must have some incident edge not
	in the finite set $E$. It follows that $v$ lies in $G-E$.
	\item If $\deg_{G}(v)$ is a finite number, then we use this information
	and the decidability of the incidence relation to compute the set
	of all edges incident to $v$. After doing this, we can check whether
	$v$ has some incident edge not in $E$. Observe that this is equivalent
	to ask whether $v$ lies in $G-E$. 
\end{enumerate}
After checking each element $v\in V$, the algorithm outputs a list
of those in $G-E$.
\end{proof}
\begin{lemma}
\label{lem:semidecidable-finite-connected-component}There is an algorithm
which, on input the description of a connected moderately computable graph $\G$,
and a finite set of edges $E$ in $\G$, halts if and only if $\G-E$
has some finite connected component. 
\end{lemma}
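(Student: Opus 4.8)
The plan is to enumerate candidate finite connected components of $G-E$ and halt the moment we confirm that one of them really is a complete connected component of $G-E$. The key observation is that any finite connected component $C$ of $G-E$ must contain at least one vertex that is incident (in $G$) to an edge of $E$: since $G$ is connected and $E$ is finite, a connected component disjoint from all endpoints of $E$ would have to be all of $G-E$, hence infinite. So I would first invoke \Cref{lem:calculabilidad-de-vecinos} to compute the finite list $L$ of vertices of $G-E$ that are incident to some edge of $E$; every finite connected component of $G-E$ meets $L$.

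Next I would run the following semi-decision procedure. For increasing radii $r \in \N$ and precisions $s \in \N$, and for each $v \in L$, compute the finite graph $G(v,r,s)$ and, inside it, the connected component $C$ of $v$ using only edges of $G(v,r,s)$ that are not in $E$. This $C$ is a finite candidate. I then check whether $C$ is actually a full connected component of $G-E$, i.e. whether no vertex of $C$ has an incident edge in $G-E$ leading outside $C$. This check is decidable: for each vertex $u \in V(C)$, if $\deg_G(u) = \infty$ then $u$ has an incident edge outside the finite set $E$, and if that edge is a loop it lies in $C$, otherwise — having infinitely many incident edges — $u$ certainly has an incident non-$E$ edge to a vertex outside the finite set $V(C)$, so $C$ is not closed and we discard this candidate; if $\deg_G(u)$ is finite, we compute all edges of $G$ incident to $u$ (using the degree and the decidable incidence relation), discard those in $E$, and check that every remaining edge has both endpoints in $V(C)$. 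If all $u \in V(C)$ pass, we have certified that $C$ is a finite connected component of $G-E$, and we halt.

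For correctness: if the procedure halts, it has exhibited a genuine finite connected component, so $G-E$ indeed has one. Conversely, if $G-E$ has a finite connected component $C_0$, pick $v_0 \in C_0 \cap L$. Since $C_0$ is finite and connected in $G-E$, there is a radius $r_0$ bounding the length of paths within $C_0$ from $v_0$, and a precision $s_0$ large enough that all edges of $C_0$ have index $\le s_0$; for $r \ge r_0$, $s \ge s_0$ the component of $v_0$ computed inside $G(v_0,r,s)$ minus $E$ is exactly $C_0$ (it cannot be larger, as $C_0$ is already a maximal connected set in $G-E$), and the closure check succeeds, so the procedure halts. Hence it halts iff $G-E$ has a finite connected component.

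The main obstacle — and the only genuinely delicate point — is the closure check for vertices of infinite degree: one must argue that such a vertex, if it lies in a finite candidate $C$, forces $C$ to fail the test, using that only finitely many of its incident edges (namely loops at $u$, of which there are finitely many inside the finite $G(v,r,s)$, but possibly infinitely many in $G$) stay inside $C$. Here a small care is needed: if $u$ has infinitely many loops, all of them lie in $C$ yet $u$ might still be "internal"; but then $\deg_{G-E}(u)=\infty$ with infinitely many loops, and $C$ containing $u$ would still be finite only if $u$ has no non-loop incident edge outside $E$ — which is decidable from a large enough precision $s$, since past some $s$ all non-loop edges at $u$ that could leave $V(C)$ would already be visible. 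I would phrase the check so that a candidate $C$ is accepted only once the computation has, for every $u\in V(C)$ of infinite degree, confirmed (via the degree function plus a sufficiently fine $G(v,r,s)$) that all of $u$'s non-$E$ incident edges have their other endpoint inside $V(C)$; this is decidable because $\deg_G(u)=\infty$ together with $V(C)$ finite means a witnessing "escaping" edge, if it exists, appears at some finite precision, and if it does not exist this too is detected once $\deg_{G(v,r,s)-E}(u)$ stabilizes at a value recording only loops and edges inside $C$.
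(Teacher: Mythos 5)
Your overall architecture is the paper's: search exhaustively over the computable balls $G(v,r,s)$, extract candidate connected components of the ball minus $E$, certify a candidate by checking that every vertex has all of its $G$-incident edges accounted for, and argue correctness in both directions exactly as the paper does (a certified candidate persists and is a true finite component; conversely a true finite component is captured once $r$ and $s$ are large enough). The preliminary reduction to the finite list $L$ of $E$-endpoints via \Cref{lem:calculabilidad-de-vecinos} is a harmless variant of the paper's choice of a single such vertex $v$.

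The genuine problem is your treatment of vertices of infinite degree, which is both incorrect as written and unnecessary. First, the claim that a vertex $u$ with $\deg_G(u)=\infty$ lying in a finite candidate $C$ ``certainly has an incident non-$E$ edge to a vertex outside $V(C)$'' is false: all of $u$'s infinitely many edges could be loops or parallel edges to a single other vertex of $C$. Second, your fallback check --- verifying that \emph{all} non-$E$ edges at such a $u$ stay inside $V(C)$, detected ``once $\deg_{G(v,r,s)-E}(u)$ stabilizes'' --- is not a decision procedure: when $\deg_G(u)=\infty$ the degree function gives you no bound at which to stop enumerating, stabilization of an unbounded quantity cannot be witnessed at any finite stage, and the property in question is only the complement of a semi-decidable one. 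The observation that dissolves all of this (and is what the paper uses) is that, since a graph is finite here precisely when its \emph{edge} set is finite, a finite connected component of $G-E$ can never contain a vertex of infinite degree in $G$: such a vertex keeps infinitely many incident edges after removing the finite set $E$, so its component in $G-E$ has infinitely many edges. Hence the correct candidate test simply rejects any component containing an infinite-degree vertex, and for the remaining (finite-degree) vertices your check via the degree function and the decidable incidence relation is exactly right. With that one repair your proof coincides with the paper's.
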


\begin{proof}
On input $G$ and $E$, the algorithm proceeds as follows. First,
compute a vertex $v$ which is incident to some edge in $E$. Then,
for each pair of natural numbers $r$ and $s$, compute the graph
$G(v,r,s)$ and check whether the finite graph $G(v,r,s)-E$ has a
connected component $G'$ verifying the following two conditions:
\begin{enumerate}
	\item Every vertex in $G'$ has finite degree in $G$.
	\item Every vertex in $G'$ has all its incident edges from $G$ in $G(v,r,s)$.
\end{enumerate}
Given a connected component of $G(v,r,s)-E$, we can decide whether
these conditions are satisfied. Indeed, for the first condition we
just need to compute $\deg_{G}(v)$ for each $v\in V(G')$ and check
that it is finite, while for the second one we just need to compare
$\deg_{G}(v)$ and $\deg_{G'}(v)$ for each $v\in V(G')$. If such
a connected component exists, then the algorithm halts and concludes
that $G-E$ has a finite connected component (which indeed is $G'$).

Let us check that this algorithm is correct. If the algorithm halts
and such a component $G'$ exists for some $r_{0}$ and $s_{0}$,
then $G'$ will continue to be a finite connected component of $G(v,r,s)-E$
for all $r\geq r_{0}$, $s\geq s_{0}$ because no new edges incident
to vertices in $G'$ can appear, and thus $G'$ is a finite connected
component of $G-E$. On the other hand it is clear that if $G-E$
has some connected component $G'$, then it has the form specified
above for some $r_{0}$ and $s_{0}$ big enough. 
\end{proof}
It is known that on a highly computable graph with one end, it is
algorithmically decidable whether the removal of a finite set of edges
leaves a connected graph \cite[Lemma 4.4]{carrasco-vargas_geometric_2023}.
Here we extend this result to moderately computable graphs.
\begin{lemma}
\label{lem:decidable-connected-complement-one-end}There is an algorithm
which, on input the description of a connected moderately computable graph $\G$
which has one end, and a finite set of edges $E$ in $\G$, decides
whether $\G-E$ is connected.
\end{lemma}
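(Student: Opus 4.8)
The plan is to prove the predicate ``$\G-E$ is connected'' both semi-decidable and co-semi-decidable, and then to exploit the one-end hypothesis to combine the two half-algorithms into a total decision procedure. Co-semi-decidability comes almost for free from \Cref{lem:semidecidable-finite-connected-component}: that lemma provides a procedure halting exactly when $\G-E$ has a finite connected component. The point is that, because $\G$ has one end, a disconnected $\G-E$ necessarily has a finite connected component --- it has more than one component, and at most one of them can be infinite. So whenever $\G-E$ is disconnected, this procedure eventually halts.

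For the semi-decidable side (confirming connectedness) I would first reduce connectedness to a condition about a computable finite set of vertices. Assume $E\neq\emptyset$ (otherwise $\G-E=\G$ is connected). Using \Cref{lem:calculabilidad-de-vecinos}, compute the finite set $W$ of vertices of $\G-E$ incident to some edge of $E$. The key observation is that \emph{every} connected component of $\G-E$ contains a vertex of $W$: a component none of whose vertices is incident to an edge of $E$ would have all of its $\G$-incident edges already present in $\G-E$, hence would be a connected component of the connected graph $\G$, hence all of $\G$ --- impossible, since it contains no edge of $E$ while $E\neq\emptyset$. (In particular $W\neq\emptyset$, since $\G-E$ has at least one component because $E(\G)$ is infinite.) Consequently $\G-E$ is connected if and only if all vertices of $W$ lie in a single connected component of $\G-E$.

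It then remains to semi-decide that last condition, which I would do by an exhaustive search over the computable finite balls $G(v_0,r,s)$ around a fixed $v_0\in W$: for growing $r$ and $s$, delete the edges of $E$ from $G(v_0,r,s)$, compute the connected component of $v_0$ in the resulting finite graph, and check whether it already contains all of $W$; if so, declare $\G-E$ connected. A path inside such a ball that avoids $E$ is a genuine path of $\G-E$, so a successful search certifies that $W$ lies in a single component; conversely, if $\G-E$ is connected, then the finitely many finite paths of $\G-E$ joining $v_0$ to the other vertices of $W$ all lie inside $G(v_0,r,s)$ once $r$ exceeds their lengths and $s$ exceeds the indices of all their edges, so the search succeeds. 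Running this search in parallel (dovetailing) with the procedure of \Cref{lem:semidecidable-finite-connected-component} yields the algorithm: exactly one of the two halts, and which one it is gives the answer.

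The main obstacle --- and the step that genuinely uses the hypotheses --- is the proof that the resulting algorithm is total, i.e.\ that the two semi-decision procedures jointly exhaust all inputs. This is precisely where ``one end'' is indispensable: without it, $\G-E$ could be disconnected with several infinite components, and then neither procedure would halt. It is also where the observation that every component of $\G-E$ meets $W$ is needed, so that a disconnected $\G-E$ is guaranteed either to have a finite component (caught by \Cref{lem:semidecidable-finite-connected-component}) or at least to fail the ball search. The remaining ingredients --- computability of $W$, computability of the balls $G(v_0,r,s)$, and the routine transfer of paths between $G(v_0,r,s)-E$ and $\G-E$ --- are immediate from the earlier results.
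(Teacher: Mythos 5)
Your proposal is correct and follows essentially the same route as the paper: run the finite-component detector of \Cref{lem:semidecidable-finite-connected-component} (which, by the one-end hypothesis, halts exactly when $\G-E$ is disconnected) in parallel with an exhaustive search certifying that the finite set of vertices produced by \Cref{lem:calculabilidad-de-vecinos} all lie in one component of $\G-E$. Your implementation of the second half via growing balls $G(v_0,r,s)$ rather than direct path enumeration, and your explicit justification that every component of $\G-E$ meets $W$, are only cosmetic differences from the paper's argument.
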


\begin{proof}
By \Cref{lem:semidecidable-finite-connected-component}, it suffices
to exhibit an algorithm which on input a description of $\G$ and
$E$ as in the statement, halts if and only if $\G-E$ is connected.
On input $G$ and $E$, the algorithm proceeds as follows. First,
execute the procedure in \Cref{lem:calculabilidad-de-vecinos}
to compute the finite set $\{v_{1},\dots,v_{n}\}$ of all vertices
incident to some edge in $E$, and which also lie in $\G-E$. Define
$\G_{i}$ as the connected component in $\G-E$ containing $v_{i}$,
$i\in\{1,\dots,n\}$. 

Observe that the graph $\G-E$ is connected if and only if all the
components $\G_{i}$ are equal. Moreover, a component $G_{i}$ equals
$G_{j}$ if and only if there is a path in $G$ whose initial vertex
is $v_{i}$, final vertex $v_{j}$, and which does not visit edges
among $E$. Thus we just need to search for paths exhaustively, and
halt the algorithm when, for every pair of $i,j\in\{1,\dots,n\}$,
$i\ne j$, we have found a path whose initial vertex is $v_{i}$,
whose initial vertex is $v_{j}$, and which does not visit edges in
$E$. 
\end{proof}
In what follows, we will apply these results to the properties that
we defined for paths. More specifically, we will prove that it is
algorithmically decidable whether a finite path is extensible to a
one-way or two-way infinite Eulerian path. 

\subsection{Proof of \Cref{thm:right=000020bi=000020extensible=000020is=000020decidable}
}\label{subsec:Decidability-of-path-properties}

In this subsection we prove \Cref{thm:right=000020bi=000020extensible=000020is=000020decidable},
which asserts that for each moderately computable graph $G$ satisfying
$\os$ (resp. $\e$), it is algorithmically decidable whether a finite
path can be extended to a one-way (resp. two-way) infinite Eulerian
path on $G$. We start with the proof of \Cref{thm:right=000020bi=000020extensible=000020is=000020decidable}
in the case of paths extensible to one-way infinite Eulerian paths:
\begin{proposition}
\label{prop:decidability-right-extensible-moderately-computable}There
is an algorithm which, on input the description of a moderately computable
graph $\G$ satisfying $\os$, the information of whether $\G$ has
some vertex with odd degree, and a finite path, decides whether the
path can be extended to a one-way infinite path on $G$. 
\end{proposition}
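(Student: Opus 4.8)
The goal is to decide, for a finite path $t$ in a moderately computable graph $G$ satisfying $\os$, whether $t$ can be extended to a one-way infinite Eulerian path. By \Cref{prop:extensibility-one-sided}, this is equivalent to deciding whether $t$ is right-extensible, i.e. whether the three conditions of \Cref{def:right-extensible} hold. The plan is to check each of the three conditions in turn, reducing each to the computability results already established in this section.

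\textbf{Conditions (2) and (3): the easy ones.} Condition (3) asks whether the final vertex of $t$ has an incident edge not visited by $t$. Since $t$ visits only finitely many edges, and $G$ is moderately computable, we can compute $\deg_G(v)$ for $v$ the final vertex of $t$: if it is infinite, condition (3) holds automatically; if it is finite, we enumerate all edges incident to $v$ (there are finitely many, and we can compute this set since we know when to stop) and check whether one of them is missing from $t$. Condition (2) asks whether the initial vertex of $t$ is distinguished in $G$. Here we use the extra input, namely whether $G$ has a vertex of odd degree. If it does, that vertex is the unique distinguished vertex; since $t$ is given, we can compute $\deg_t$ of its initial vertex, and we can test whether $\deg_G$ of the initial vertex is odd (using moderate computability; note a vertex of odd degree necessarily has finite degree). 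If $G$ has no vertex of odd degree, then the distinguished vertices are exactly those of infinite degree, and we check directly via the degree function whether the initial vertex of $t$ has infinite degree. So conditions (2) and (3) are decidable.

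\textbf{Condition (1): connectedness of $G-t$.} This is the substantive part. The graph $G-t$ is obtained from $G$ by removing the finite edge set $E$ visited by $t$. Since $G$ satisfies $\os$, it is connected and has one end. By \Cref{lem:decidable-connected-complement-one-end}, there is an algorithm which, on input a description of a connected moderately computable graph with one end and a finite set of edges, decides whether removing that set leaves a connected graph. So we feed the description of $G$ and the edge set $E$ of $t$ to that algorithm and read off the answer. I would expect the main obstacle here to be bookkeeping: one must note that a description of $G$ together with the finite path $t$ lets us compute the finite edge set $E$ and hence invoke \Cref{lem:decidable-connected-complement-one-end}; one must also confirm that $\os$ guarantees exactly the hypotheses (connected, one end) needed by that lemma, which it does by \Cref{def:egw-conditions-e1}.

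\textbf{Putting it together.} The algorithm runs all three checks and outputs ``yes'' iff all three succeed; correctness is immediate from \Cref{prop:extensibility-one-sided}. The only genuinely nontrivial ingredient is \Cref{lem:decidable-connected-complement-one-end}, and the rest is routine use of the computability of the degree function together with the finiteness of the edge set of $t$. I'd also flag that this algorithm is \emph{not} uniform in $G$ in general, since it needs the side information about whether $G$ has an odd-degree vertex — for highly computable graphs this is decidable from the degree function, but for moderately computable graphs it may not be, which is presumably why the proposition takes that bit as an explicit input and why the uniformity caveat in the introduction's remark applies.
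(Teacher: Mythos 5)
Your proposal is correct and follows essentially the same route as the paper: reduce to the three conditions of right-extensibility via \Cref{prop:extensibility-one-sided}, decide condition (1) with \Cref{lem:decidable-connected-complement-one-end}, decide condition (2) from the odd-degree side information together with the computable degree function, and decide condition (3) by checking whether the final vertex retains an unvisited incident edge (the paper packages this last check as \Cref{lem:calculabilidad-de-vecinos}, which you reprove inline). Your closing remark on non-uniformity also matches the paper's \Cref{rem:uniformity}.
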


\begin{proof}
Let $G$ be a graph as in the statement, and let $t$ be a path as
in the statement. By \Cref{thm:e1-characterization-of-trails}
it suffices to check whether the following three conditions are satisfied:
\begin{enumerate}
	\item $\G-t$ is connected. 
	\item The initial vertex of $t$ is distinguished in $\G$.
	\item There is an edge $e$ incident to the final vertex of $t$ which was
	not visited by $t$. 
\end{enumerate}
Condition (1) can be checked with the algorithm in \Cref{lem:decidable-connected-complement-one-end}.
For the condition (2), let us recall that if $G$ has a vertex with
odd degree, then this is the only distinguished vertex of $G$, and
if $G$ has no vertex with odd degree, then a vertex is distinguished
if and only if it has infinite degree (\Cref{def:distinguished}).
It follows the information of whether $G$ has a vertex with odd degree
plus the computability of the vertex degree function are sufficient
to decide whether the initial vertex of $t$ is distinguished. 

Finally, observe that $t$ satisfies (3) exactly when its final vertex
lies in $G-t$. This can be checked with the algorithm in \Cref{lem:calculabilidad-de-vecinos}. 
\end{proof}
\begin{remark}
\label{rem:uniformity} The previous result shows that for each individual
graph $G$ satisfying $\os$, the problem of deciding whether a path
can be extended to a one-way infinite Eulerian path is decidable.
However, this algorithm is not uniform on the graph, in the sense
that we must tell the algorithm whether $G$ has a vertex of odd degree
or not. This can not be improved:
\end{remark}
\begin{proposition}
There is no algorithm which, on input the description of a moderately
computable graph satisfying $\os$, decides whether it has a vertex
with odd degree or not.
\end{proposition}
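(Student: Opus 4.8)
The plan is to prove this impossibility result by a standard diagonalization / reduction argument against the halting problem. We want to show there is no algorithm that, given a description of a moderately computable graph satisfying $\os$, decides whether the graph has a vertex of odd degree. The idea is to build, uniformly from an index $e$ of a Turing machine, a moderately computable graph $G_e$ satisfying $\os$ such that $G_e$ has a vertex of odd degree if and only if the $e$-th machine halts (on input $e$, say). If the claimed decision algorithm existed, composing it with the map $e \mapsto (\text{description of } G_e)$ would decide the halting problem, a contradiction. So the work is entirely in constructing the family $G_e$ and checking it is uniformly moderately computable.

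First I would fix a concrete ``base'' graph that satisfies $\os$ and whose odd/infinite-degree behavior can be toggled. A natural choice is the one-way infinite ray $\llbracket \N \rrbracket$ with an extra structure near the origin: let the backbone be the ray with vertices $0, 1, 2, \dots$ and edges $\{n, n+1\}$. This graph has exactly two vertices of odd degree (namely $0$ and... no: every internal vertex has degree $2$, vertex $0$ has degree $1$), so actually the plain ray has exactly one vertex of odd degree and one end, hence satisfies $\os$. To diagonalize I instead attach a ``gadget'' at vertex $0$: run the machine $M_e$ and, step by step, add edges to a pendant structure so that the degree of a designated vertex $v^\ast$ stays even (e.g. $0$, or grows by $2$'s) as long as $M_e$ has not halted, but becomes odd (or the construction produces an odd-degree vertex somewhere) exactly when $M_e$ halts. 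Concretely: reserve a special vertex $v^\ast$; at stage $s$, if $M_e$ has not halted by step $s$, ensure $v^\ast$ has an incident matched pair of edges to fresh leaf vertices added at that stage (keeping its degree even and finite), and keep the rest of $G_e$ equal to the ray (which carries the ``one vertex of odd degree'' — the endpoint of the ray — so $\os$ holds via the odd-degree clause); if $M_e$ halts at step $s_0$, then from that stage on attach a single extra pendant edge at $v^\ast$ making $\deg(v^\ast)$ odd, and simultaneously \emph{cap off} the ray's odd endpoint by routing it into an even structure, so that now $v^\ast$ is the unique odd-degree vertex and $\os$ still holds. The point is that in both cases $G_e$ satisfies $\os$ (one end, connected, countably infinite edge set, exactly one odd-degree vertex), but \emph{which} vertex is the odd one, and whether there is any, is not the issue — rather, I should arrange that one branch has \emph{a vertex of odd degree} and the other has \emph{no vertex of odd degree but a vertex of infinite degree}. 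That is the cleaner dichotomy matching \Cref{def:egw-conditions-e1}: if $M_e$ never halts, give some vertex infinite degree (e.g. attach infinitely many leaves to $v^\ast$ over all stages) and make everything else even; if $M_e$ halts at stage $s_0$, stop attaching leaves, so $v^\ast$ ends with finite degree, and then add exactly one more pendant edge so $v^\ast$ has odd degree, with all other vertices even. In the halting case $G_e$ has a vertex of odd degree; in the non-halting case $G_e$ has no odd-degree vertex but has $v^\ast$ of infinite degree. Both satisfy $\os$.

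Next I would verify uniform moderate computability of $G_e$. The vertex and edge index sets are decidable uniformly in $e$ because membership of a stage-$s$ gadget vertex/edge is determined by simulating $M_e$ for $s$ steps, which is effective; adjacency and incidence are likewise read off the explicit stage-by-stage construction. The subtle point — and the \textbf{main obstacle} — is computability of the degree function $V(G_e) \to \N \cup \{\infty\}$, because a priori to know $\deg_{G_e}(v^\ast)$ is finite we would need to know $M_e$ halts. I avoid this by making the construction commit early: choose the design so that \emph{every vertex other than $v^\ast$ has an a priori bounded, computable degree} (e.g. ray vertices have degree $2$, leaves have degree $1$), and for $v^\ast$ itself the degree is $\infty$ in the non-halting case and some finite value in the halting case — but crucially I must ensure the algorithm can \emph{decide} which. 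Here is the fix: do \emph{not} put the toggling degree on a single vertex whose finiteness is undecidable; instead, for every $e$, declare by fiat in the description that $\deg_{G_e}(v^\ast) = \infty$ and build the non-halting branch accordingly, while in the halting branch route the ``extra'' edges so that $v^\ast$ still has infinite degree (keep attaching leaves forever even after halting) but introduce a \emph{different}, new vertex $w$ of finite odd degree exactly when $M_e$ halts. Then: $\deg_{G_e}(w) = 0$ if $M_e$ hasn't halted by the stage indexing $w$, and $=1$ (odd) once it has — but this is monotone and $w$'s degree is decidable only in the limit. To make $\deg(w)$ computable I let $w$ be added with its single pendant edge \emph{at the same stage} $M_e$ halts, so $w \in V(G_e)$ iff $M_e$ halts, and then $w$'s degree is simply $1$ whenever $w$ exists — decidable. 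Thus all degrees are computable: ray vertices $\mapsto 2$, the ray endpoint $\mapsto 2$ after capping (or we keep $v^\ast$ as the infinite-degree anchor and the endpoint even), leaves $\mapsto 1$, $v^\ast \mapsto \infty$, and $w \mapsto 1$ if it exists. The graph is connected at every stage, has one end (it is the ray plus finitely/countably many pendant leaves and, in the halting case, one extra pendant path), edge set countably infinite. So $G_e$ is uniformly moderately computable and satisfies $\os$, and $G_e$ has an odd-degree vertex ($w$) iff $M_e$ halts.

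Finally I would conclude: suppose toward a contradiction that $A$ is an algorithm deciding, from a description of a moderately computable graph satisfying $\os$, whether it has a vertex of odd degree. Since $e \mapsto (\text{description of } G_e)$ is computable and each $G_e$ is a moderately computable graph satisfying $\os$, the composition $e \mapsto A(\text{description of } G_e)$ is a total computable function that outputs ``yes'' iff $M_e$ halts on input $e$. This decides the halting problem, a contradiction. Hence no such $A$ exists. The only parts needing care in the full write-up are (i) making the stage-by-stage description of $G_e$ explicit enough that decidability of the index sets and the adjacency/incidence relations is manifest, and (ii) double-checking the $\os$ conditions hold for $G_e$ in both branches — connectedness and ``one end'' are clear since we only ever attach pendant leaves and at most one pendant edge to a one-way ray, and the degree-parity clause holds because in the non-halting branch $v^\ast$ has infinite degree with all else even, and in the halting branch $w$ is the unique odd-degree vertex.
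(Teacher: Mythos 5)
Your overall strategy --- a reduction from the halting problem via a uniformly computable family $G_e$ that satisfies $\os$ in both branches and has an odd-degree vertex if and only if $M_e$ halts --- is the right one (the paper leaves this proof to the reader, and any such proof must exploit a vertex of infinite degree, since a locally finite graph satisfying $\os$ always has an odd-degree vertex). But the concrete gadget you settle on is broken: you give $v^\ast$ infinite degree by attaching infinitely many \emph{pendant leaves}, and each such leaf has degree $1$, which is odd. You even record this in your degree table (``leaves $\mapsto 1$'') and then assert that in the non-halting branch ``$v^\ast$ has infinite degree with all else even.'' These two statements are contradictory: your $G_e$ has infinitely many odd-degree vertices in \emph{both} branches, so it violates the degree clause of $\os$ (\Cref{def:egw-conditions-e1} requires either exactly one odd-degree vertex, or none at all together with a vertex of infinite degree). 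The constructed graphs therefore do not lie in the domain of the hypothetical decision algorithm, and the reduction never gets off the ground. A secondary loose end is the ray's endpoint, which has degree $1$; ``capping'' it to an even degree is asserted but never actually constructed.

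The fix is easy and is essentially handed to you by the paper's own example graph $G_0$: pump up the degree of $v^\ast$ with \emph{loops} rather than pendant edges. Each loop adds $2$ to $\deg(v^\ast)$ and creates no new vertex, so the graph consisting of the single vertex $v^\ast$ with one loop added at every stage is connected, one-ended, has a countably infinite edge set, has no odd-degree vertex, and has $v^\ast$ of infinite degree --- it satisfies $\os$, with degree function constantly $\infty$, and you can discard the ray entirely. If $M_e$ halts at stage $s_0$, additionally attach one pendant edge from $v^\ast$ to a fresh vertex $w$ indexed by $s_0$, so that membership of $w$ (and of its edge) in the index sets is decided by simulating $M_e$ for the corresponding number of steps; then $w$ is the unique odd-degree vertex, $\deg(w)=1$ is computable, and $\os$ still holds. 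With this repair your uniformity argument, your treatment of the degree function, and the final contradiction with the halting problem all go through verbatim.
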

The proof is left to the interested reader. We observe, however, that
the algorithm is uniform when restricted to highly computable graphs.
This is because a locally finite graph satisfying $\os$ must always
have a vertex with odd degree.

\begin{proposition}
\label{prop:decidability-right-extensible}There is an algorithm which,
on input the description of a highly computable graph $\G$ satisfying
$\os$, and a finite path, decides whether it is right-extensible. 
\end{proposition}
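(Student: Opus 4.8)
The plan is to observe that this statement is the uniform counterpart, for highly computable graphs, of \Cref{prop:decidability-right-extensible-moderately-computable}, and that the one source of non-uniformity there — the externally supplied bit telling us whether $\G$ has a vertex of odd degree — disappears in the highly computable setting. Indeed, a highly computable graph is locally finite, hence has no vertex of infinite degree; combined with the third condition in $\os$ (\Cref{def:egw-conditions-e1}), this forces $\G$ to have \emph{exactly one} vertex of odd degree, which is then its unique distinguished vertex (\Cref{def:distinguished}). In particular, a highly computable graph satisfying $\os$ always has a vertex of odd degree, so no side information is needed and the procedure below depends only on the description of $\G$ and on $t$.

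Granting this, I would run the test coming from \Cref{thm:e1-characterization-of-trails}, checking the three conditions defining right-extensibility for the input path $t$. A highly computable graph is in particular moderately computable, and $\os$ guarantees that $\G$ is connected and has one end; since the set of edges visited by the finite path $t$ is finite, writing $\G-t = \G-E$ for this finite edge set $E$, condition (1) — that $\G-t$ is connected — is decidable by \Cref{lem:decidable-connected-complement-one-end}.

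For condition (2), that the initial vertex $t(0)$ is distinguished: by the observation above, the distinguished vertex is precisely the unique vertex of odd degree, so it suffices to compute $\deg_{\G}(t(0))$ using the computable degree function of the highly computable graph and test its parity. For condition (3), that some edge incident to the final vertex $t(b)$ is not visited by $t$: as noted in the proof of \Cref{prop:decidability-right-extensible-moderately-computable}, this holds exactly when $t(b)$ lies in $\G-t$, which is decidable by \Cref{lem:calculabilidad-de-vecinos}; equivalently, one compares $\deg_{\G}(t(b))$ with $\deg_{t}(t(b))$, both computable — the former since $\G$ is highly computable, the latter directly from $t$ — and checks whether the first strictly exceeds the second. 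The algorithm accepts iff all three checks succeed.

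The only step with genuine content is condition (1), and it is entirely delegated to \Cref{lem:decidable-connected-complement-one-end}; everything else reduces to evaluating the computable degree function and to finite bookkeeping on $t$. So I do not expect any real obstacle here beyond the machinery already established — the substance of the statement is precisely that, unlike in the merely moderately computable case (cf.\ \Cref{rem:uniformity}), the resulting algorithm is uniform over all highly computable graphs satisfying $\os$.
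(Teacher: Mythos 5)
Your proposal is correct and follows essentially the same route as the paper: the key observation in both is that local finiteness together with the third condition of $\os$ forces exactly one vertex of odd degree, so the non-uniform bit required by \Cref{prop:decidability-right-extensible-moderately-computable} can be supplied automatically. The paper simply invokes that proposition with this bit set, whereas you unroll its proof (the same three checks via \Cref{lem:decidable-connected-complement-one-end} and \Cref{lem:calculabilidad-de-vecinos}), which is a presentational difference only.
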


\begin{proof}
Let us recall that a locally finite graph satisfying $\os$ has exactly
one vertex with odd degree (\Cref{def:egw-conditions-e1}). On
input the graph $G$ and a path $t$, we just have to use the algorithm
in \Cref{prop:decidability-right-extensible-moderately-computable},
and inform the algorithm that the input graph $G$ has a vertex with
odd degree. 
\end{proof}
Now we move on to consider graphs satisfying $\e$, and two-way infinite
Eulerian paths. The following result proves \Cref{thm:right=000020bi=000020extensible=000020is=000020decidable}
in the case of paths extensible to two-way infinite Eulerian paths.
Observe that in this case the algorithm is uniform for moderately
computable graphs.
\begin{proposition}
\label{prop:decidability-bi-extensible} There is an algorithm which,
on input the description of a moderately computable graph $\G$ satisfying
$\e$, and a finite path, decides whether it can be extended to a two-way
infinite Eulerian path. 
\end{proposition}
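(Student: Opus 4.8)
\emph{Proof plan.} The plan is to reduce the problem to checking the three conditions of \Cref{prop:bi-extensible-alternative-characterization}, and to observe that although condition (1) there is not obviously decidable on its own when $\G$ has two ends, one can decide it indirectly by semideciding both ``$t$ is extensible'' and ``$t$ is not extensible'' and running the two procedures in parallel. Let $E$ denote the finite set of edges visited by $t$, and let $u,v$ be its initial and final vertices.

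\emph{Step 1: conditions (2) and (3).} These coincide with the corresponding conditions in the definition of bi-extensible path, and they are decidable: using \Cref{lem:calculabilidad-de-vecinos} we can tell whether $u$ and $v$ lie in $\G-t$, and using the computable vertex degree function we can, when $\deg_\G(u)$ (resp.\ $\deg_\G(v)$) is finite, list all incident edges and compare with $E$ (and when the degree is infinite there are plainly infinitely many unvisited incident edges), which suffices to decide the existence of the edges $e$ and $f\neq e$ asked for. If either condition fails, output that $t$ is not extensible (correct by \Cref{prop:bi-extensible-alternative-characterization}). From now on assume conditions (2) and (3) hold; in particular $u,v\in V(\G-t)$, and only condition (1) remains to be decided, equivalently (by \Cref{prop:bi-extensible-alternative-characterization}) whether $t$ is extensible.

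\emph{Step 2: ``extensible'' is semidecidable.} We have $\G-t=\G-E$. If $E=\emptyset$ then $\G-t=\G$ is connected and condition (1) holds trivially; so assume $E\neq\emptyset$. The key observation is that every connected component of $\G-E$ contains at least one vertex incident in $\G$ to an edge of $E$: otherwise such a component would be a connected component of $\G$ (no edge of $\G$ would leave it), contradicting that $\G$ is connected and infinite. Let $V_E$ be the finite set of vertices of $\G-t$ incident to some edge of $E$; it is computable by \Cref{lem:calculabilidad-de-vecinos}. By the observation, condition (1) holds if and only if every $w\in V_E$ can be joined to $u$ or to $v$ by a path of $\G$ that avoids the edges of $E$. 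For a fixed $w$, the existence of such a path is semidecidable by exhaustive search over finite edge sequences, and $V_E$ is finite, so ``condition (1) holds'', hence ``$t$ is extensible'', is semidecidable.

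\emph{Step 3: ``not extensible'' is semidecidable, and conclusion.} Since conditions (2) and (3) are already known to hold, the definition of bi-extensible path together with \Cref{prop:proposition-bi-extensibility} gives that $t$ is \emph{not} extensible precisely when $\G-t$ has a finite connected component, and this is semidecidable by \Cref{lem:semidecidable-finite-connected-component}. Running the semidecision procedures of Steps 2 and 3 in parallel, exactly one halts, which decides extensibility; every step uses only the description of $\G$ and $t$, so the algorithm is uniform in $\G$. The main obstacle is the two-ended case: a component of $\G-t$ witnessing the failure of condition (1) may be infinite, and certifying that two vertices lie in different infinite components of $\G-E$ is delicate, so one cannot decide condition (1) head-on; the device that resolves this is to play the two characterizations of bi-extensibility against each other, using \Cref{prop:bi-extensible-alternative-characterization} to semidecide ``yes'' and the original definition via \Cref{lem:semidecidable-finite-connected-component} to semidecide ``no''.
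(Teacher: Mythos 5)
Your proposal is correct and takes essentially the same approach as the paper: both check conditions (2) and (3) directly via the degree function and \Cref{lem:calculabilidad-de-vecinos}, and then decide condition (1) by running in parallel the semidecision procedure of \Cref{lem:semidecidable-finite-connected-component} (halting when $\G-t$ has a finite component) against an exhaustive path search certifying the equivalent condition of \Cref{prop:bi-extensible-alternative-characterization}. The only difference is cosmetic: the paper phrases the reduction starting from \Cref{thm:e2-characterization-of-trails} and invokes the alternative characterization for the ``yes'' side, whereas you start from the alternative characterization and invoke the original one for the ``no'' side.
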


\begin{proof}
Let $G$ be a graph as in the statement, and let $t$ be a path as
in the statement. By \Cref{thm:e2-characterization-of-trails}
it suffices to verify whether the following three conditions are satisfied: 
\begin{enumerate}
	\item \label{enu:condicion=000020uno}$\G-t$ has no finite connected components.
	\item There is an edge $e$ incident to the final vertex of $t$ which was
	not visited by $t$. 
	\item There is an edge $f\ne e$ incident to the initial vertex of $t$
	which was not visited by $t$. 
\end{enumerate}
Conditions (2) and (3) can be checked by computing the vertex degree
of the initial and final vertex of $t$, and then going into case
by case considerations. The details are left to the reader. 

Once we checked the conditions (2) and (3), we proceed to check \eqref{enu:condicion=000020uno}.
By \Cref{prop:bi-extensible-alternative-characterization}, this
condition is equivalent to the following: 
\begin{enumerate}[resume]
	\item \label{enu:condicion=000020uno=000020alternativa}Every connected
	component of $G-t$ contains either the initial or the final vertex
	of $t$.
\end{enumerate}
In order to check whether these these two equivalent conditions are
satisfied by $t$, we run two procedures simultaneously. The first
halts when $t$ fails to satisfy \eqref{enu:condicion=000020uno},
while the second halts when $t$ satisfies \eqref{enu:condicion=000020uno=000020alternativa}.

Let $E$ be the set of edges visited by $t$. The first procedure
is the algorithm in \Cref{lem:semidecidable-finite-connected-component}
with input $G$ and $E$. This algorithm halts if and only if $G-t$
contains some finite connected component, that is, if $t$ fails to
satisfy \eqref{enu:condicion=000020uno}. 

We now describe the second procedure, which halts if and only if \eqref{enu:condicion=000020uno=000020alternativa}
holds. The idea is that this condition can be detected by searching
for paths exhaustively. More precisely, we start by using the algorithm
in \Cref{lem:calculabilidad-de-vecinos} to compute the finite
set $\{v_{1},\dots,v_{n}\}$ of all vertices incident to some edge
in $E$ which also lie in $\G-t$. Now let $\G_{i}$ be the connected
component in $\G-t$ containing $v_{i}$. We label these vertices
so that $v_{1}$ is the initial vertex of $t$, and $v_{n}$ is the
final vertex of $t$. Then observe that \eqref{enu:condicion=000020uno=000020alternativa}
is satisfied if and only if for every $i\in\{2,\dots,n-1\}$, there
is a path which does not visit edges from $t$, and which joins $v_{i}$
to either $v_{1}$ or $v_{n}$. Thus it suffices to enumerate paths
in $G$ exhaustively, and stop the algorithm once the condition mentioned
before is satisfied. 
\end{proof}

\subsection{Proof of \Cref{thm:effective=000020egw}}

In this subsection we prove \Cref{thm:effective=000020egw}.
This is a straightforward application of \Cref{thm:right=000020bi=000020extensible=000020is=000020decidable},
proved in the previous subsection. 
\begin{proposition}[\Cref{thm:effective=000020egw}]
Let $G$ be a moderately computable graph satisfying $\os$ (resp.
$\e$). Then $G$ admits a computable one-way (resp. two-way) infinite
Eulerian path. 
\end{proposition}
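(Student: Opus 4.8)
The plan is to reduce the statement directly to \Cref{thm:right=000020bi=000020extensible=000020is=000020decidable} (equivalently, to \Cref{prop:decidability-right-extensible-moderately-computable} and \Cref{prop:decidability-bi-extensible}) together with the characterizations \Cref{thm:e1-characterization-of-trails} and \Cref{thm:e2-characterization-of-trails}, which already guarantee that the set of finite paths extensible to an infinite Eulerian path is nonempty and that membership in it is decidable. The construction of a computable infinite Eulerian path is then a standard greedy back-and-forth argument: enumerate the edge set $E(G)=(e_n)_{n\in\N}$ (possible since $G$ is computable, hence $E(G)$ is indexed by a decidable subset of $\N$), and build an increasing sequence of finite paths $t_0\subset t_1\subset\cdots$, each extensible to an infinite Eulerian path, with $t_n$ visiting $e_n$.

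First I would treat the $\os$ case. By \Cref{lem:existencia-right-extensible} a moderately computable graph satisfying $\os$ has a distinguished vertex $v$ (we can find one: if $G$ has a vertex of odd degree it is the unique distinguished vertex, otherwise any vertex of infinite degree works, and the computability of $\deg_G$ lets us locate such a vertex), and there is a right-extensible path starting at $v$; take $t_0$ to be, say, any single-edge extensible path or the trivial path at $v$ extended to visit $e_0$. Inductively, given $t_n$ right-extensible, I search exhaustively over all finite paths $s$ extending $t_n$: by \Cref{prop:decidability-right-extensible-moderately-computable} I can decide for each candidate whether it is right-extensible, and by \Cref{lem:existencia-extension-right-extensible} some such $s$ extending $t_n$, visiting $e_{n+1}$, and right-extensible exists, so the search halts; set $t_{n+1}:=s$. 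The union $\bigcup_n t_n$ is an infinite path visiting every edge, i.e.\ a one-way infinite Eulerian path, and it is computable because the map $n\mapsto t_n$ is computable by construction. The $\e$ case is identical, using \Cref{lem:existence-bi-extensible-trails} for the base case, \Cref{prop:decidability-bi-extensible} for the decidability of bi-extensibility, and \Cref{lem:extension-bi-extensible-trails} for the inductive step (here one also uses that the extension can be taken to strictly grow the domain in both directions, so that the limit is genuinely a two-way infinite path $\lb\Z\rb\to G$).

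The steps, in order: (i) fix a computable enumeration of $E(G)$; (ii) compute an initial extensible finite path $t_0$ visiting $e_0$, using \Cref{lem:existencia-right-extensible} (resp.\ \Cref{lem:existence-bi-extensible-trails}) and the decidability of extensibility; (iii) given $t_n$, run an exhaustive search over finite paths extending $t_n$, testing extensibility via \Cref{thm:right=000020bi=000020extensible=000020is=000020decidable}, until one is found that also visits $e_{n+1}$ and (in the two-way case) strictly enlarges the domain in both directions — termination of this search is guaranteed by \Cref{lem:existencia-extension-right-extensible} (resp.\ \Cref{lem:extension-bi-extensible-trails}); (iv) output $t_{n+1}$ and repeat; (v) argue the union is the desired computable infinite Eulerian path.

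I expect the only mildly delicate point to be bookkeeping about \emph{uniformity} and about the exact shape of the limiting object: in the $\os$ case one should be slightly careful that the decision procedure of \Cref{prop:decidability-right-extensible-moderately-computable} needs the extra bit ``does $G$ have a vertex of odd degree,'' but since we are proving the existence of \emph{a} computable path for a \emph{given} graph (not a uniform algorithm), this bit is simply hard-wired into the algorithm for that graph; and in the $\e$ case one must ensure the constructed path has domain all of $\Z$, which is exactly what the ``strictly extends in both directions'' clause of \Cref{lem:extension-bi-extensible-trails} provides. Beyond that the argument is routine, so the proof will be short.
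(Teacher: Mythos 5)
Your proposal is correct and follows essentially the same route as the paper: fix a computable enumeration of $E(G)$, build an increasing sequence of right-extensible (resp.\ bi-extensible) paths $t_n$ visiting $e_n$ by exhaustive search, using the decidability of extensibility to recognize candidates and the existence lemmas to guarantee termination of each search. Your added remarks on hard-wiring the odd-degree bit in the $\os$ case and on using the ``strictly extends in both directions'' clause to ensure the two-way limit has domain $\lb\Z\rb$ are exactly the right points of care and are consistent with the paper.
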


\begin{proof}
We prove the result for one-way infinite Eulerian paths, the proof
for two-way infinite Eulerian paths is very similar. Let $G$ be a
moderately computable graph satisfying $\os$. Without loss of generality,
we can assume that the edge set of $G$ is indexed as $E(G)=\{e_{n}\mid n\in\N\}$
(it is a standard fact that any decidable and infinite subset of $\N$
admits a computable bijection onto $\N$). 

We recall that in \Cref{prop:extensibility-one-sided}, we showed
how to construct a one-way infinite Eulerian path on $G$. More precisely,
we defined a sequence of paths $(t_{n})_{n\in\N}$ satisfying the
following conditions for each $n\in\N$:
\begin{enumerate}
	\item The path $t_{n}$ is right-extensible (see \Cref{def:right-extensible}
	and \Cref{thm:e1-characterization-of-trails}).
	\item The path $t_{n}$ visits $e_{n}$.
	\item The path $t_{n+1}$ extends $t_{n}$. 
\end{enumerate}
Such a sequence of paths defines a one-way infinite Eulerian path.
In order to show the existence of a \emph{computable} one-way infinite
Eulerian path, it suffices to show that such a sequence of paths can
be computed. We already proved that the property of being right-extensible
is decidable (\Cref{thm:right=000020bi=000020extensible=000020is=000020decidable}
and \Cref{thm:e1-characterization-of-trails}). It follows from
this observation that such a sequence of paths can be computed recursively,
by performing an exhaustive search in each step. 

We describe the argument more precisely. A path $t_{0}$ as required
can be computed by exhaustive search: the required conditions are
decidable (it is right-extensible, and it visits $e_{0}$), its existence
is guaranteed, and thus the search will eventually stop. Now, assuming
that $t_{0},\dots,t_{n}$ have been already defined, we compute $t_{n+1}$
in the same manner. The required conditions are decidable (it is right-extensible,
it extends $t_{n}$, and it visits $e_{n+1}$), and its existence
is already guaranteed, so the search will eventually stop.
\end{proof}
\begin{remark}
The uniformity considerations associated to \Cref{thm:effective=000020egw}
are the same as those of \Cref{thm:right=000020bi=000020extensible=000020is=000020decidable}.
That is, in the case of two-way infinite Eulerian paths, the algorithm
is uniform for moderately computable graphs. In the case of one-way
infinite Eulerian paths, the algorithm is uniform only for highly
computable graphs. 
\end{remark}

\section{Some remarks about moderately computable graphs}\label{sec:remarks}

In this section we make some comments about the notion of moderately
computable graph. In particular, we emphasize some differences that arise in comparison to highly computable graphs. Let us first review an elementary example where
our results apply:
\begin{example}
Let $G_{0}$ be the graph consisting of a single vertex, and an infinite
and countable number of loops joining this vertex to itself. Note
that the graph $G_{0}$ admits both a one-way infinite Eulerian path,
and a two-way infinite Eulerian path. Moreover, this graph is moderately
computable. 

\begin{figure}[H]
	\begin{center}
		\begin{tikzpicture}
			\node (3) [circle,draw] {}  ;
			\path  (3)   edge[distance=45mm] node[above]  {$$} (3);
			\path  (3)   edge[distance=40mm] node[above]  {$$} (3);
			\path  (3)   edge[distance=35mm] node[above]  {$$} (3);
			\path  (3)   edge[distance=30mm] node[above]  {$$} (3);
			\path  (3)   edge[distance=25mm] node[below]  {$\vdots$} (3);
		\end{tikzpicture}
	\end{center}
	
	\caption{A representation of the graph $G_{0}$. }\label{fig:dibujito}
\end{figure}
\end{example}

This example illustrates that the Erdős, Grünwald, and Weiszfeld theorem
naturally applies to graphs with loops and multiple edges between
vertices. For this reason, we have given a definition of moderately
computable graph which ensures that the generality of the result is
not lost. However, allowing loops has certain influence
in the computability of certain operations. In particular, adding loops allows us to give a simple proof of the following result:
\begin{proposition}
\label{prop:moderately=000020computable=000020graph=000020with=000020uncomputable=000020distance}There
is a (non simple) moderately computable graph for which the distance
function is not computable. 
\end{proposition}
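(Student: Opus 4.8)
The plan is to build a moderately computable graph $G$ whose underlying graph is easy to describe, but where encoding a non-computable set into the "loop multiplicities" forces the distance function to be non-computable. First I would fix a set $A\subseteq\N$ that is recursively enumerable but not decidable, say with a computable enumeration $A=\{a_0,a_1,\dots\}$ of distinct elements given by a computable injection $n\mapsto a_n$. The vertex set will be $\N$, thought of as a path: include one edge $\{n,n+1\}$ for each $n\in\N$, so that the "spine" of the graph is a ray. On top of this, at each vertex $n$ I would attach loops at vertex $n$, but the number of loops will depend on $A$ in a way that is only semi-decidable; the point is to make it so that deciding the distance $d_G(0,n)$ would let us decide membership in $A$.

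The cleanest way to get this is as follows. Rather than attaching loops, attach a second parallel edge $e'_n$ between $n$ and $n+1$ that is "present" only if a certain condition holds — but since the graph must be computable with a decidable edge set, I cannot make edges appear conditionally. So instead I would route everything through genuinely decidable data: let the edge set contain, for each $n$, the spine edge $\{n,n+1\}$, plus for each pair $(n,k)$ with $k\le$ the stage at which $n$ enters $A$ (if it ever does) an edge — no, this is still not decidable. The honest approach, and the reason the proposition emphasizes "non simple", is to use loops whose \emph{count} at a vertex is determined by whether the vertex's index lies in $A$: put infinitely many loops at vertex $n$ if $n\in A$, and no loops at $n$ if $n\notin A$. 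Then $\deg_G(n)=\infty$ exactly when $n\in A$ and $\deg_G(n)=2$ (or $1$ at the endpoint $0$) otherwise; but the degree function must be computable, so this does not work either unless $A$ is decidable.

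The resolution — and I believe this is the intended construction — is to make the \emph{distance}, not adjacency or degree, be the uncomputable quantity, by using multiple parallel edges to create shortcuts. Take vertices $\N$; include the spine edges $\{n,n+1\}$; and include, for every $n$ and every $k\ge 1$, a bundle of parallel edges realizing an edge $\{0,n\}$ \emph{only when} $n\notin A$, encoded decidably because "$n\notin A$" — no. Let me instead reverse the roles: the paper only needs non-computability of $d_G$, and loops change nothing about distance, so loops are a red herring for distance; the phrase "(non simple)" must be there because the shortcut edges create multi-edges. I would therefore take $G$ with vertex set $\N$, spine edges $\{n,n+1\}$ as above, and for each $n$ add a single extra edge $f_n$ joining $0$ directly to $n$ — giving a multigraph — whose presence is unconditional (so the edge set is trivially decidable), but subdivide $f_n$ by inserting $g(n)$ many intermediate vertices, where $g$ is a computable function chosen so that $d_G(0,n)=\min(n,\,1+g(n))$; then choosing $g(n)$ large iff $n\in A$ (which requires only that $\{n: g(n)\ge n\}$ be computable, i.e. $g$ computable — always true) makes $d_G(0,n)$ encode $A$. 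Concretely: set $g(n)=0$ if $n$ enters $A$ within $n$ steps of its enumeration, and $g(n)=n$ otherwise — this $g$ is computable, the graph is computable and locally finite hence moderately computable, and $d_G(0,n)<n$ iff $n\in A$, so computing $d_G$ decides $A$, contradiction.

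The main obstacle is exactly the tension exhibited above: any construction must keep adjacency, incidence, and the degree function \emph{computable} while making distance \emph{uncomputable}, so the uncomputable set $A$ can only be hidden in lengths of explicitly-built (and explicitly-subdivided) shortcut paths, never in which edges or loops exist or in degrees. Once the graph is set up with a computable "shortcut length" function $g$ satisfying $d_G(0,n)=\min(n,1+g(n))$ and with $g$ chosen from the standard "first $n$ steps of an r.e. enumeration of a non-computable set", the verification is routine: $G$ is computable by inspection, locally finite (each vertex has degree $\le 3$, or more if we also throw in the harmless loops to justify "non simple"), hence moderately computable, and an algorithm for $d_G$ would decide $A$. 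I would write the proof in this order — fix $A$ and $g$; describe $V(G)$, $E(G)$, and the incidence relation explicitly and note their decidability; compute $\deg_G$ and note it is computable and finite-valued; establish the distance formula $d_G(0,n)=\min(n,1+g(n))$; and conclude that computability of $d_G$ contradicts undecidability of $A$.
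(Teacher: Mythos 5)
There is a genuine gap, and in fact two. First, your final construction fails on its own terms: you choose $g(n)=0$ when ``$n$ enters $A$ within $n$ steps of its enumeration'' and $g(n)=n$ otherwise, and you note yourself that this $g$ is computable. But then $d_G(0,n)=\min(n,1+g(n))$ is a computable function of $n$, so the distance function of your graph is computable and there is no contradiction. The condition $d_G(0,n)<n$ is equivalent to the \emph{decidable} statement ``$n$ enters $A$ within $n$ steps,'' not to $n\in A$: an element that enters $A$ late gets $g(n)=n$ and no shortcut. Any attempt to repair this by making the shortcut genuinely depend on membership in $A$ (rather than on a truncated, decidable approximation) destroys either the decidability of the edge set or the computability of the degree function.

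Second, and more fundamentally, the step ``the graph is computable and locally finite hence moderately computable'' is false, and it dooms the entire strategy of building a locally finite example. Moderate computability requires the degree function $V(G)\to\N\cup\{\infty\}$ to be computable; on locally finite graphs this is exactly the definition of a highly computable graph, and for highly computable graphs the distance function is always computable (both facts are stated in the paper). So \emph{no} locally finite graph can witness the proposition; the example must have vertices of infinite degree. This is also why the loops are not, as you suggest, a red herring: the paper's proof takes a known computable, simple, locally finite graph with uncomputable distance function (such a graph necessarily has an uncomputable degree function, which is where the non-computability hides) and attaches countably infinitely many loops to \emph{every} vertex. This makes the degree function constantly $\infty$, hence computable, so the new graph is moderately computable, while loops change no distances. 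The phrase ``non simple'' refers precisely to these loops, whose role is to trivialize the degree function rather than to create shortcuts.
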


\begin{proof}
It is well known that for computable, simple, and locally finite graphs,
the distance function is not guaranteed to be computable (see \cite{calvert_Distance_2011}).
Let $G$ be a computable, simple, and locally finite graph for which
the distance function is not computable. Now let $G'$ be the graph
obtained by joining to each vertex in $G$ a countable and infinite
number of loops. The new graph is still computable. It is also moderately
computable, as the vertex degree function has constant value $\infty$.
However, the distance function on the new graph $G'$ is the same
as the distance function on the graph $G$, and thus it is not computable.
\end{proof}
It is well known that in a highly computable graph, the distance function
is computable. The previous result shows that this property is lost
in the larger class of moderately computable graphs. However, our
example was obtained by adding loops, and we do not know whether the
situation changes if we require the graph to be simple:
\begin{question}
Is there a simple moderately computable graph, with an uncomputable
distance function?
\end{question}

We now make some comments about the the computability of the vertex degree function in comparison to the computability of the number of neighbors of vertices. We denote by $N_{G}(v)$
the set of neighbors of a vertex $v$ in the graph $G$. That is,
$N_{G}(v)$ is the set of all $u\in V(G)$ such that there is an edge
joining $\ensuremath{u}$ to $\ensuremath{v}$. For simple graphs,
we have the equality $\deg_{G}(v)=|N_{G}(v)|$ and thus requiring
the vertex function to be computable is the same as requiring the
function $V(G)\to\N,v\mapsto|N_{G}(v)|$ to be computable. For locally
finite graphs, with loops and multiple edges allowed, it is easy to
see that the computability of the vertex degree function implies the
computability of the function $V(G)\to\N,v\mapsto|N_{G}(v)|$. However, this
property does not hold anymore beyond locally finite graphs:
\begin{proposition}
There is a moderately computable graph where each vertex has finitely
many neighbors, but the function $V(G)\to\N,v\mapsto|N_{G}(v)|$ is
not computable.
\end{proposition}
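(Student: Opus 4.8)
The plan is to construct the desired graph by taking a moderately computable graph in which the degree function is computable (constantly $\infty$) but in which the number of distinct neighbors of a vertex cannot be extracted algorithmically, even though it is always finite. The natural idea is to start from an uncomputable but recursively enumerable set $A \subseteq \N$ and encode it into the edge multiplicities: fix a computable enumeration $(a_n)_{n \in \N}$ of $A$ without repetitions, and build $G$ on vertex set $\{w\} \cup \{u_n : n \in \N\}$ where each $u_n$ is joined to $w$, and additionally we add loops at each vertex (or extra parallel edges between $w$ and $u_n$) to force every degree to be infinite. The point is to arrange that $u_n$ has a second neighbor exactly when $n \in A$, while keeping adjacency and incidence decidable.

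Concretely, I would take $V(G) = \{w\} \cup \{u_n : n \in \N\} \cup \{z\}$, and put one edge between $w$ and each $u_n$, a countably infinite set of loops at $w$, at $z$, and at each $u_n$ (so $\deg_G$ is constantly $\infty$, hence computable), and finally, for each stage $s$ at which a number $n$ enters $A$ (i.e. $n = a_s$), add a single edge between $u_n$ and $z$. The index sets and the incidence relation are decidable: the edge between $u_n$ and $z$, if present, is assigned a canonical index computed from the enumeration stage, and to decide whether a candidate index names a present edge one simply runs the enumeration up to the relevant bound. Every vertex then has finite neighbor set: $u_n$ has neighbors $\{u_n, w\}$ if $n \notin A$, and $\{u_n, w, z\}$ if $n \in A$; $w$ has neighbors $\{w\} \cup \{u_n : n \in \N\}$... which is infinite, so I need to be slightly more careful — I should instead make $w$ adjacent only to finitely many things, or simply accept that $w$ has infinitely many neighbors and note the proposition only requires \emph{some} vertices to witness the failure. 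Rereading the statement: it says "each vertex has finitely many neighbors," so I must fix this. The cleanest fix is to replace the single hub $w$ by separate gadgets: for each $n$, a private vertex $x_n$ adjacent to $u_n$ (always), plus the conditional edge to a private vertex $y_n$ when $n \in A$, plus loops everywhere to push degrees to $\infty$. Then every vertex has at most three neighbors, the graph is connected only if I also add a spanning structure — so I add a computable path or ray through all the $u_n$'s and $x_n$'s using fresh edges, which keeps neighbor sets finite and keeps everything decidable.

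With this construction in hand, the verification splits into three routine checks. First, $G$ is computable: membership in the vertex index set and the edge index set is decidable, and given an edge index together with two vertex indices one decides incidence by unwinding the (finite) definition, in particular running the enumeration of $A$ up to the stage encoded in the index. Second, $G$ is moderately computable: every vertex carries infinitely many loops, so $\deg_G \equiv \infty$, which is trivially computable, and every vertex has finitely many neighbors by construction. Third, the function $v \mapsto |N_G(v)|$ is not computable: from an algorithm computing it we could decide, on input $n$, whether $|N_G(u_n)|$ equals the "with $y_n$" value or the "without $y_n$" value, and these two values differ precisely according to whether $n \in A$; since $A$ is chosen to be a non-computable r.e. set, no such algorithm exists.

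The main obstacle is entirely in the bookkeeping of the first step: reconciling the three simultaneous demands — connectedness, decidable adjacency/incidence with loops and parallel edges, and \emph{every} vertex having only finitely many neighbors while \emph{every} degree is infinite — without accidentally making some vertex (like a hub) have infinitely many neighbors or making incidence undecidable. I expect the correct construction to be the "private gadget plus computable connecting ray plus loops" design sketched above; once the vertex and edge index sets are pinned down explicitly, the decidability of incidence and the three verifications are all straightforward, and the reduction from a non-computable r.e. set is immediate. I would present the gadget construction carefully and then dispatch the rest in a few lines.
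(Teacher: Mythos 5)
There is a genuine gap: your construction does not satisfy the paper's definition of a computable graph. That definition requires the \emph{adjacency} relation on pairs of vertices to be decidable, in addition to the incidence relation between edges and vertex pairs. In both versions of your gadget, the pair $(u_n, z)$ (respectively $(u_n, y_n)$) is adjacent if and only if $n \in A$, so an adjacency oracle for your graph decides the non-computable set $A$; adjacency here is only semi-decidable (you can search for a connecting edge, but you never know when to stop searching). You verify the decidability of the edge index set and of incidence, but never of adjacency, and that is exactly where the construction breaks. The standard repair is to index the conditional private neighbor by the enumeration \emph{stage} rather than by $n$: introduce vertices $y_s$ for $s \in \N$ and join $y_s$ to $u_{a_s}$, so that deciding whether $u_n$ and $y_s$ are adjacent amounts to computing $a_s$ and comparing it with $n$. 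Then $u_n$ gains an extra neighbor precisely when $n \in A$, every neighbor set stays finite, and adjacency, incidence, and the (constantly infinite, after adding loops) degree function are all computable. With that change your reduction from $A$ goes through. A minor point: the proposition does not require connectedness, so the spanning ray you worry about is unnecessary (though harmless).

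For comparison, the paper avoids all of this bookkeeping by a two-line reduction: it takes a computable, simple, locally finite graph $G$ with uncomputable distance function (citing a known construction), adds countably many loops at every vertex to get $G'$, and observes that $|N_{G'}(v)| = |N_G(v)| + 1 = \deg_G(v) + 1$; if this were computable then $G$ would be highly computable and its distance function computable, a contradiction. Your approach, once repaired, is a self-contained direct coding of an r.e.\ set and does not rely on the cited example, which is a legitimate alternative; but as written the adjacency issue is fatal and must be fixed before the rest of the verification is routine.
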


\begin{proof}
Let $G$ and $G'$ be the graphs from \Cref{prop:moderately=000020computable=000020graph=000020with=000020uncomputable=000020distance}.
We claim that $G'$ has the property in the statement. Indeed, the
graphs $G$ and $G'$ have the same vertex set, and for each vertex
$v$ we have $|N_{G'}(v)|=|N_{G}(v)|+1$ (the $+1$ comes from the
loops). This shows that if $v\mapsto|N_{G'}(v)|$ was a computable
function, then $G$ would be a highly computable graph. But this would
imply that its distance function is computable, a contradiction to
the hypothesis on $G$.
\end{proof}
\bibliographystyle{abbrv}
\bibliography{bibliography}
\end{document}